\setlist{itemsep=3pt}
\newtheorem{prop}{Proposition}
\newtheorem{theo}[prop]{Theorem}
\newtheorem{lemm}[prop]{Lemma}
\newtheorem{coro}[prop]{Corollary}
\newtheorem*{claim}{Claim}
\theoremstyle{definition}
\newtheorem{rema}[prop]{Remark}
\newtheorem{defi}[prop]{Definition}
\numberwithin{prop}{section}
\newtheorem{conj}{Conjecture}
\newcommand{\bF}{\mathbf{F}}
\newcommand{\HH}{\mathbb{H}}
\newcommand{\MM}{\mathbb{M}}
\newcommand{\NN}{\mathbb{N}}
\newcommand{\RR}{\mathbb{R}}
\renewcommand{\SS}{\mathbb{S}}
\newcommand{\ZZ}{\mathbb{Z}}
\newcommand{\cC}{\mathcal C}
\newcommand{\cF}{\mathcal F}
\renewcommand{\cH}{\mathcal H}
\newcommand{\cI}{\mathcal I}
\newcommand{\cP}{\mathcal P}
\newcommand{\cQ}{\mathcal Q}
\newcommand{\cT}{\mathcal T}
\newcommand{\cZ}{\mathcal Z}
\DeclareMathOperator{\Sym}{Sym}
\DeclareMathOperator{\supp}{supp}
\DeclareMathOperator{\area}{area}
\let\oldmarginpar\marginpar
\renewcommand\marginpar[1]{\-\oldmarginpar[\raggedleft\footnotesize #1]%
{\raggedright\footnotesize #1}}
\DeclareMathOperator{\Id}{Id}
\DeclareMathOperator{\diam}{diam}
\DeclareMathOperator{\length}{length}
\DeclareMathOperator{\Div}{div}
\DeclareMathOperator{\dmn}{dmn}
\DeclareMathOperator{\Index}{index}
\newcommand{\eps}{\varepsilon}
\title{The p-widths of a polygon}
\author{Otis Chodosh}
\author{Sithipont Cholsaipant}
\address{Department of Mathematics, Bldg.\ 380, Stanford University, Stanford, CA 94305, USA}
\email{ochodosh@stanford.edu}
\email{pschol@stanford.edu}
\begin{document}

\begin{abstract}
The $p$-widths are a nonlinear analogue of the spectrum of the Laplacian. We prove that each $p$-width of a polygon in $\mathbb{R}^2$ is achieved by a union of billiard trajectories. We also compute the $p$-widths of the equilateral triangle for $p=1,\dots,4$ and square for $p=1,\dots,3$. 
\end{abstract}
\maketitle

\section{Introduction} 
Consider $\Omega \subset \RR^2$ a compact domain with Lipschitz boundary $\partial\Omega$. (Later in this article we will mostly be interested in polygonal domains.) The $p$-widths of $\Omega$ (denoted $\omega_p(\Omega)$ for $p=1,2,\dots$) are a natural geometric invariant of $\Omega$ (analogous to the spectrum of the Laplacian with Neumann boundary conditions)  introduced by Gromov \cite{Gromov:waist,Gromov:expanders,Gromov:spectra} and studied further by Guth \cite{Guth}. The $p$-widths (of a Riemannian manifold) have received considerable attention due to their link with minimal surfaces (analogous to the eigenfunctions of the Laplacian). See, for example, \cite{Almgren:htpy, Pitts, MarquesNeves:posRic,LiokumovichMarquesNeves,IrieMarquesNeves,Song:full-yau,ChodoshMantoulidis:3d,ZZ:cmc,Zhou:multiplicity-one,MarquesNeves:uper-semi-index}. 

In spite of their importance, only a few $p$-widths are explicitly known (in contrast with the eigenvalues of the Laplacian; cf.\ \cite{mathoverflow:laplace.spectrum}). For regions $\Omega\subset \RR^2$ we have the following results:
\begin{itemize}
\item The unit disk $D$ has $\omega_p(D) \leq \pi d$ for any $p \leq \frac 12 (d^2+3d)$ (Guth \cite[p.\ 1974]{Guth}).
\item The unit disk has $\omega_1(D) = \omega_2(D) = 2$ and $\omega_3(D) = \omega_4(D) = 4$ (Donato \cite{Donato}).
\item The unit disk has $\omega_5(D) > 4$ (Donato--Montezuma \cite[p.\ 60]{DonatoMontezuma}).
\item A sufficiently round ellipse $E$ with diameters $a_1<a_2$ has $\omega_1(E) = a_1$ and $\omega_2(E) = a_2$ (Donato \cite{Donato}).
\end{itemize}
In fact, the only compact Riemannian manifold whose $p$-widths are completely known are the round $2$-sphere which satisfies $\omega_p(\SS^2) = 2\pi \lfloor \sqrt{p}\rfloor$ as proven by the first-named author and Mantoulidis \cite{ChodoshMantoulidis:2d} (earlier work of Aiex computed $\omega_p(\SS^2)$ for $p=1,\dots,8$ \cite{Aiex:ellipsoids}) and the round $\RR P^2$ which satisfies $\omega_p(\RR P^2) = 2\pi \lfloor \frac 14 (1+\sqrt{1+8p}) \rfloor$ as proven by Marx-Kuo \cite{MK:Rp2}. We refer to \cite{Nurser,RL,Chu:ball,BL:RP3,BL:RPn,BL:lens,ChuLi} for results concerning certain $p$-widths of other manifolds (cf.\ \cite{Chu:database}). 

\subsection{The billiard conjecture} In a closed $2$-dimensional surface, the first-named author and Mantoulidis proved \cite{ChodoshMantoulidis:2d} that any $p$-width is equal to the sum of the lengths of closed geodesics (as opposed to geodesic nets, which could \emph{a priori} occur in the theory). The natural analogue of this result for $\Omega\subset \RR^2$ with smooth boundary is contained in the following conjecture (well-known to experts):
\begin{conj}\label{conj:billiard}
    For $\Omega\subset \RR^2$ a bounded domain with smooth boundary and $p=1,2,\dots$ there's a finite set of billiard trajectories $\gamma_{p,1},\dots,\gamma_{p,N(p)} \subset \Omega$ (possibly with repetition) so that
    \[
    \omega_p(\Omega) = \sum_{j=1}^{N(p)} \length(\gamma_{p,N(p)})
    \]
\end{conj}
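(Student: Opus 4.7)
The plan is to adapt the strategy of Chodosh--Mantoulidis \cite{ChodoshMantoulidis:2d} from closed $2$-dimensional surfaces to the free boundary planar setting. For each $p$, I would first invoke a free boundary version of the Almgren--Pitts min-max construction to produce a stationary integral $1$-varifold $V_p$ in $\overline\Omega$ whose mass equals $\omega_p(\Omega)$ and which satisfies the free boundary condition along $\partial\Omega$. Interior stationarity forces $\supp V_p$ to be, away from a discrete set of junctions, a union of straight line segments, and the free boundary condition forces $\supp V_p$ to meet $\partial\Omega$ with equal angles of incidence and reflection. If one can moreover rule out all singular junctions, then concatenating reflections at $\partial\Omega$ decomposes $\supp V_p$ into a finite union of closed billiard trajectories, which is exactly the desired conclusion.

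The main obstacle is that free boundary stationary $1$-varifolds may \emph{a priori} form \emph{geodesic nets}: configurations with interior triple (or higher) junctions balanced in first variation, or with boundary junctions where several segments meet $\partial\Omega$ at a single point without obeying the reflection law. Such nets are not decomposable into billiard trajectories. Following \cite{ChodoshMantoulidis:2d}, the strategy to rule them out would be to combine an upper semi-continuity bound for the Morse index of min-max varifolds (in the spirit of \cite{MarquesNeves:uper-semi-index}) with explicit computations showing that any nontrivial junction admits enough independent length-decreasing deformations to force the index strictly above $p$, contradicting the min-max characterization.

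Concretely, interior junctions should be handled by Steiner-type variations: rounding a $k$-valent junction into a short smooth arc strictly decreases length, and does so under a multi-parameter family of independent perturbations, yielding unstable directions. Boundary junctions are more delicate, since admissible variations must respect $\partial\Omega$; there I would combine tangential sliding along $\partial\Omega$ with smoothing of the angle configuration, using the boundary curvature as an input. The technical cornerstone would be a free boundary upper semi-continuity of index for $1$-dimensional Almgren--Pitts min-max, which does not appear in the literature at this generality and would require adapting the deformation-theoretic machinery of Marques--Neves to the free boundary setting.

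After reducing to a disjoint union of smooth reflecting geodesic arcs, one concludes by observing that each connected component of $\supp V_p$ must either close up to form a periodic billiard trajectory or be a chord meeting $\partial\Omega$ orthogonally at both endpoints (a degenerate billiard trajectory of period $2$). The hardest step, I expect, is the free boundary index lower bound at boundary junctions, which requires both a clean regularity theory for free boundary stationary $1$-varifolds near $\partial\Omega$ and a delicate unstable variation construction sensitive to the geometry of $\partial\Omega$; by contrast, the interior Steiner analysis and the concatenation-into-billiards step should be relatively routine.
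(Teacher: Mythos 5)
This statement is an open \textbf{conjecture} in the paper --- the authors explicitly do not prove it, and instead prove only the polygonal analogue (Theorem \ref{theo:polygon-billiard}).  The Remark immediately after Theorem \ref{theo:polygon-billiard} points out exactly where the smooth-boundary case gets stuck: one would need a boundary version of the Wang--Wei curvature estimates for the Allen--Cahn equation, which is not available.  The polygonal case is accessible only because the edges are flat, so one can reflect the Allen--Cahn solution across each edge and reduce to the known interior theory of \cite{ChodoshMantoulidis:2d}; that trick does not extend to curved $\partial\Omega$.

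Beyond the status mismatch, your proposed route also misdescribes the strategy of \cite{ChodoshMantoulidis:2d} that it is meant to emulate.  They do \emph{not} run Almgren--Pitts min-max, appeal to upper semi-continuity of index, and then kill geodesic-net junctions by Steiner variations.  (Indeed, for $1$-varifolds one cannot in general rule out stationary nets this way: an equiangular triple junction is \emph{stable}, not unstable, under length variations, so Steiner smoothing is not an admissible length-decreasing deformation among stationary objects, and there is no index obstruction of the kind you need.)  Instead \cite{ChodoshMantoulidis:2d} regularizes via Allen--Cahn min-max (\`a la Guaraco--Gaspar), uses the Wang--Wei curvature estimates to get clean convergence of level sets, and then uses the integrability of the sine--Gordon potential to show the limit consists of geodesics with no singular junctions.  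This is precisely the machinery whose free-boundary counterpart is missing.  So your plan correctly flags a key gap (``does not appear in the literature''), but the missing ingredient is not a free boundary index semi-continuity theorem for Almgren--Pitts $1$-cycles --- it is a boundary Wang--Wei curvature estimate (or some substitute), and absent that, the paper cannot and does not claim the smooth-boundary result.
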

Here, we define a \emph{billiard trajectory} $\gamma$ to be a union of oriented line segments $\ell_1,\dots,\ell_K$ so that $\partial \ell_k \subset \partial\Omega$ and we require the following two conditions:
\begin{enumerate}
    \item For $k=1,\dots,K$, the ending point $p$ of $\ell_k$ agrees with the starting point of $\ell_{k+1}$ (cyclically defined) and the unit vector parallel to $\ell_{k+1}$ is the reflection of the unit vector parallel to $\ell_k$ across $T_p\partial\Omega$ (i.e.\ the trajectory exhibits specular reflection at $\partial\Omega$). 
    \item One of the following holds:
    \begin{enumerate}
        \item The condition (1) holds cyclically, i.e. for  $\ell_K$ and $\ell_1$, or else
        \item $\gamma_1$ meets $\partial\Omega$ orthogonally at the starting point of $\gamma_1$ and $\gamma_K$ meets $\partial\Omega$ orthogonally at the ending point of $\gamma_K$. 
    \end{enumerate}
\end{enumerate}

\begin{rema}
    The work of Donato--Montezuma  \cite{DonatoMontezuma} implies that if $\Omega\subset \RR^2$ is a convex region with smooth boundary then $\omega_1(\Omega)$ is achieved by a line segment meeting $\partial\Omega$ orthogonally so this verifies Conjecture \ref{conj:billiard} in this case. 
\end{rema}

In this paper we prove an analogue of Conjecture \ref{conj:billiard} when $\Omega = P$ is the compact region bounded by a convex polygon in $\RR^2$. Write $\partial P$ for the boundary of $P$ and $V$ for the set of vertices of $P$. A \emph{billiard trajectory} $\gamma$ in $P$ is defined to be the union of oriented line segments $\ell_1,\dots,\ell_K$ so that either all endpoints lie in $\partial P\setminus V$ and (1),(2.a) hold above or else (1) holds (and all points $p$ lie in $\partial P\setminus V$) and the following holds:
\begin{enumerate}
\item[(1')] For $k=1,\dots,K$, the ending point $p$ of $\ell_k$ lies in $\partial P\setminus V$ and agrees with the starting point of $\ell_{k+1}$ and the specular reflection condition holds at $p$.
\item[(2')] One of the following holds:
\begin{enumerate}
    \item[(a')] The condition (1') holds cyclically, i.e.\ for $\ell_K$ and $\ell_1$, or else
    \item[(b')] the starting point of $\gamma_1$ lies in $\partial P\setminus V$ and $\gamma_1$ meets $\partial P$ orthogonally at the starting point, or the starting point of $\gamma_1$ is contained in $V$ and a  similar statement holds for the endpoint of $\gamma_K$. 
\end{enumerate}
\end{enumerate}
Loosely stated, a billiard trajectory in a polygon is a sequence of straight lines that reflect off of interior boundary points with equal incidence angle and stop if they hit a vertex.
\begin{rema}
    Observe that in this definition of billiard trajectory, the billiard will stop if it hits a vertex. For example, if $P$ is a convex polygon then any side of $P$ will be a billiard trajectory. We conjecture that for certain polygons, some $p$-width is attained by the length of a single side. However, this will not be the case for all polygons. In particular, in Theorem \ref{prop:triangle-billiards} we prove that billiard trajectories in the equilateral triangle $T$ unfold to straight lines in $\RR^2$. In particular, this allows for $\partial T$ to occur, but not one or two sides of $T$.
\end{rema}

Our first result here is as follows: 
\begin{theo}\label{theo:polygon-billiard}
    For $P\subset \RR^2$ a (compact) convex polygon and $p=1,2,\dots$ there's a finite set of billiard trajectories $\gamma_{p,1},\dots,\gamma_{p,N(p)} \subset P$ (possibly with repetition) so that
    \[
    \omega_p(P) = \sum_{j=1}^{N(p)} \length(\gamma_{p,j})
    \]
\end{theo}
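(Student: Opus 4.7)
My plan is to approximate $P$ by a family of smooth, strictly convex domains $P_\eps$ (for instance by smoothing each corner at scale $\eps$) Hausdorff-converging to $P$, apply the free boundary min-max theory in each $P_\eps$ to produce a stationary integral $1$-varifold $V_{p,\eps}$ with $\mathbf{M}(V_{p,\eps}) = \omega_p(P_\eps)$, and then pass to a subsequential varifold limit as $\eps \to 0$. The continuity $\omega_p(P_\eps) \to \omega_p(P)$ should follow by transferring sweepouts between $P$ and $P_\eps$ along a Lipschitz-close retraction, using smallness of the symmetric difference to get matching lim-inf and lim-sup bounds (this is in the spirit of the continuity arguments used by Donato--Montezuma). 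The limit $V_p$ will then be a stationary integral $1$-varifold in $P$ with $\mathbf{M}(V_p) = \omega_p(P)$, and the rest of the proof is to show $\supp V_p$ decomposes as a union of billiard trajectories.

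\textbf{Structure of the stationary configuration.} Because $V_p$ is stationary in the flat region $P$, a standard first-variation argument shows $\supp V_p$ is a finite union of line segments $\ell_1,\ldots,\ell_M$, with three local constraints where segments meet: an interior triple junction must have its three arms at $120^\circ$; a meeting point in the interior of a boundary edge must either be an orthogonal endpoint or a specular reflection between two segments (vanishing first variation under sliding the meeting point tangent to the edge); and at a vertex $v \in V$, stationarity imposes no angle condition, since any sliding perturbation off $v$ along an adjacent edge can only increase mass. These vertex endpoints arise in the approximation scheme as reflection points in $P_\eps$ that concentrate near a corner of $P$ as $\eps \to 0$; the orthogonal endpoints and specular reflections persist cleanly under the limit.

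\textbf{Main obstacle: ruling out interior triple junctions.} The chief technical difficulty is to rule out interior $120^\circ$ triple junctions in $V_p$, so that $\supp V_p$ decomposes into billiard trajectories rather than a genuine geodesic net. This is the exact analogue of the corresponding obstruction resolved in \cite{ChodoshMantoulidis:2d} for the closed surface case, where a Lusternik--Schnirelmann-type replacement argument locally shortens a Steiner tree while preserving the sweepout class mod $2$, producing a strict mass decrease that contradicts the min-max characterization. I would adapt that replacement argument here, extending the local deformation to coexist with specular reflection points and vertex terminations elsewhere; since the Chodosh--Mantoulidis deformation is purely local around a triple junction, the boundary and vertex structure of $P$ far from the junction should not obstruct the argument, though verifying this carefully is the main work of the proof.

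\textbf{Decomposition into billiard trajectories.} Once interior triple junctions are excluded, $\supp V_p$ is locally a $1$-manifold whose only non-smooth points on $\partial P \setminus V$ are specular reflections or orthogonal terminations, with any remaining endpoints lying at vertices. Following a segment across its specular reflection points traces out a maximal chain that either closes up cyclically, matching case (2'a) of the definition, or terminates on both sides at an orthogonal boundary hit or a vertex, matching case (2'b). Enumerating these maximal chains as $\gamma_{p,1},\ldots,\gamma_{p,N(p)}$ with the integer multiplicities inherited from $V_p$ yields
\[
\sum_{j=1}^{N(p)} \length(\gamma_{p,j}) = \mathbf{M}(V_p) = \omega_p(P),
\]
which is the claim.
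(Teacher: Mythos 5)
Your proposal takes a genuinely different route from the paper---free boundary Almgren--Pitts min-max in a smoothed domain $P_\eps$ followed by a varifold limit as $\eps\to 0$---whereas the paper works with the Allen--Cahn regularization directly on $P$ and exploits the polygonal structure by even-reflecting the solution across the (flat) edges, reducing the boundary problem to the interior one covered by \cite[Theorem 1.2]{ChodoshMantoulidis:2d}. Unfortunately, the route you propose runs into a genuine gap at exactly the step you flag as "the main work": ruling out interior triple junctions. You attribute this to a "Lusternik--Schnirelmann-type replacement argument" from \cite{ChodoshMantoulidis:2d} that shortens a Steiner tree while preserving the sweepout class mod $2$, but that is not what that paper does. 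The exclusion of junctions there is an Allen--Cahn analytic argument: it relies on the index bound for the $\eps$-critical point, the Wang--Wei curvature estimates, and integrability of the sine--Gordon nonlinearity. There is no known local replacement/shortening argument in classical Almgren--Pitts min-max that eliminates $120^\circ$ junctions for general $p$; if such a deformation existed, the closed-surface theorem would be elementary. So "adapting the replacement argument" is not a strategy that can be carried out.

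There is a second, closely related problem: smoothing $P$ to a strictly convex $P_\eps$ with smooth boundary moves you into the setting of Conjecture~\ref{conj:billiard}, which is open. Precisely because $\partial P_\eps$ is curved, there is no even reflection of the Allen--Cahn solution across $\partial P_\eps$ yielding an interior solution, so the only known route to rule out junctions in $P_\eps$ would be a free-boundary version of the Wang--Wei estimates---which, as the paper's remark after Theorem~\ref{theo:polygon-billiard} notes, is not available. Your scheme therefore reduces the polygonal case to the harder smooth case rather than the other way around. The paper's key observation is that the flat edges of $P$ let one localize near each boundary point, reflect across the edge to get a genuine interior Allen--Cahn solution on a disk with doubled index bound (Proposition~\ref{prop:local-bounce}), apply the closed-surface result there, and control the vertex contribution by the monotonicity formula (Lemma~\ref{lemm:mass-bd-vertex}). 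The rest of your outline---the stationarity conditions at boundary reflections and orthogonal endpoints, and the final grouping of segments into maximal chains---is consistent with what the paper does at the end, but it rests on the unresolved junction-exclusion step.
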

The proof of Theorem \ref{theo:polygon-billiard} follows the approach used by the first-named author and Mantoulidis to prove \cite[Theorem 1.2]{ChodoshMantoulidis:2d} along with a reflection argument reducing the problem to a question on the interior. 

\begin{rema}
The reflection argument avoids the issue that a boundary version of the Wang--Wei curvature estimates for the Allen--Cahn equation \cite{WangWei,WangWei2,ChodoshMantoulidis:3d} not known (such estimates would likely resolve Conjecture \ref{conj:billiard}). 
\end{rema}

\begin{rema}
    Theorem \ref{theo:polygon-billiard} is easily seen to extend to $(M,g)$ a Riemannian metric on a compact surface so that $\partial M$ is made up of piecewise totally geodesic curves with convex corners. In this case, in the definition of billiards we should replace ``line segments'' by ``geodesic segments'' and also allow for a closed geodesic in $M$. 
\end{rema}

\subsection{The first width of a convex polygon}

For a compact (convex) set $K \subset \RR^2$ we define the geometric width 
\[
W(K) : = \inf_{|v|=1} \left( \max_{x\in K} x\cdot v - \min_{x\in K} x\cdot v \right).
\]
In other words, $W(K)$ is the width of the smallest slab that contains $K$. 
\begin{theo}\label{theo:poly-w1}
    For a convex polygon $P\subset \RR^2$ we have $\omega_1(P) = W(P)$. 
\end{theo}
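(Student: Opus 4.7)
The plan is to prove $\omega_1(P)\le W(P)$ and $\omega_1(P)\ge W(P)$ separately.

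For the upper bound, let $v_0$ be a unit vector realizing the infimum in the definition of $W(P)$; after rotating, take $v_0 = e_2$ so that $P \subset [c,d] \times [0, W(P)]$. The vertical chords $\Sigma_s := P \cap \{x_1 = s\}$ for $s \in [c,d]$ all have length at most $W(P)$ and degenerate to points at $s = c, d$. Suitably reparametrized (identifying the two trivial endpoints), this one-parameter family gives a continuous 1-sweepout of $P$ in the Almgren--Pitts sense, so $\omega_1(P) \le \sup_s \length(\Sigma_s) \le W(P)$.

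For the lower bound, apply Theorem~\ref{theo:polygon-billiard} to write $\omega_1(P) = \sum_{j=1}^N \length(\gamma_j)$ for billiard trajectories $\gamma_j \subset P$, $N \ge 1$. Reducing to a single trajectory $\gamma$, it suffices to show $\length(\gamma) \ge W(P)$. The key tool is the standard billiard unfolding: reflecting $P$ across the boundary edge at each successive bounce of $\gamma$ unfolds $\gamma$ into a straight segment $\tilde\gamma \subset \RR^2$ of the same length, passing through a chain of congruent copies $P_1 = P, P_2, \ldots, P_K$ of $P$. If $\gamma$ is a single segment with non-vertex orthogonal endpoints, then $P_1 = P_K$ and $\tilde\gamma = \gamma$ is the perpendicular between two parallel supporting lines of $P$, of length equal to a width of $P$ in the corresponding direction, hence $\ge W(P)$; the vertex-to-edge and vertex-to-vertex single-segment cases reduce similarly to widths of $P$. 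For multi-bounce trajectories, the endpoints of $\tilde\gamma$ lie on parallel supporting lines $\ell, \tilde\ell$ of $P_1, P_K$ respectively (perpendicular to $\tilde\gamma$), and $|\tilde\gamma|$ is the distance between them. For a closed trajectory the unfolding produces a nontrivial translation of length $|\tilde\gamma|$, and a supporting-line argument shows $|\tilde\gamma| \ge W(P)$ (in fact $\ge 2W(P)$).

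The main obstacle is the multi-bounce case. While the single-segment conclusions follow immediately from the definition of $W(P)$, for multi-bounce trajectories one must relate the distance between the supporting lines $\ell, \tilde\ell$ of $P_1, P_K$ in the unfolded picture to $W(P)$. I would proceed by contradiction: if $|\tilde\gamma| < W(P)$, then both $P_1$ and $P_K$ have widths at least $W(P)$ perpendicular to $\tilde\gamma$ and so extend well beyond the thin slab between $\ell$ and $\tilde\ell$; a careful analysis of the reflection sequence arranging the chain $P_1, \ldots, P_K$ along $\tilde\gamma$ should yield a geometric contradiction. An alternative route would be to approximate $P$ in Hausdorff distance by smooth strictly convex regions, invoke the $\omega_1 = W$ statement for smooth convex domains (via the Donato--Montezuma double-normal characterization mentioned in the introduction, combined with the observation that the shortest double normal of a smooth convex body has length equal to its width), and conclude via upper semi-continuity of $\omega_1$ and continuity of $W$ under Hausdorff convergence.
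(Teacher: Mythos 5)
Your upper bound is the same parallel-line sweepout as the paper's, and is fine.

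Your primary lower-bound route has a genuine gap: it relies on the claim that every billiard trajectory $\gamma\subset P$ satisfies $\length(\gamma)\geq W(P)$, and this is false. Any single side of $P$ is a vertex-to-vertex billiard trajectory in the sense of the paper's definition (both endpoints in $V$, satisfying (2.b')); the paper explicitly remarks on this right after the definition preceding Theorem~\ref{theo:polygon-billiard}. For a regular hexagon $H$ with unit side, $W(H)=\sqrt{3}>1$, so a side is a billiard of length strictly less than $W(H)$; the same happens for every regular $n$-gon with $n\geq 5$ (the side length drops below the width once $n\geq 5$). So the step ``the vertex-to-vertex single-segment case reduces similarly to widths of $P$'' is simply wrong, and there is no uniform inequality of the kind you want. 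The multi-bounce and closed cases are likewise only gestured at (``a careful analysis \dots should yield a geometric contradiction''). To make the billiard route work you would have to show that the specific billiards produced by the min-max limit for $p=1$ are not of the degenerate single-side type, which Theorem~\ref{theo:polygon-billiard} alone does not give you.

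Your alternative route is, up to minor phrasing, exactly the paper's proof: exhaust $P$ from the inside by smooth strictly convex domains $P_s$, invoke the Donato--Montezuma result that $\omega_1(P_s)=\length(\eta)$ for a free-boundary segment $\eta$, observe that the two parallel supporting lines of $P_s$ through the endpoints of $\eta$ sandwich $P_s$ so $\length(\eta)\geq W(P_s)\to W(P)$, and combine with $\omega_1(P_s)\leq\omega_1(P)$. The only cosmetic difference is that the paper gets the last inequality from inclusion monotonicity $P_s\subset P\Rightarrow\omega_1(P_s)\leq\omega_1(P)$ (an instance of Lemma~\ref{lemm:LS}), rather than appealing to an abstract upper semi-continuity of $\omega_1$ under Hausdorff convergence, which avoids having to justify that separately.
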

Note that the upper bound $\omega_1(P) \leq W(P)$ follows by sweeping out by parallel lines in the direction of an optimal choice of $v$. 

\begin{rema}
    If we consider the $2$-sweepout by all lines then we would get $\omega_2(P)\leq \diam(P)$ (equivalently, replace ``$\inf$'' by ``$\sup$'' in $W(\cdot)$). However, this inequality does not need to be sharp, since in Theorem \ref{theo:low-widths-triangle} we show that for $T$ an equilateral triangle inscribed in the unit circle, we have $\omega_1(T)=\omega_2(T) = \frac 32$ while $\diam(T) =\sqrt{3}$.
\end{rema}

\subsection{Low widths of certain regular polygons}

Let $T$ denote the closed region enclosed by the equilateral triangle that's inscribed in the unit circle and $S$ the closed interior of the square of sidelength $\sqrt{2}$ inscribed in the unit circle we can compute the following $p$-widths.

\begin{theo}\label{theo:low-widths-triangle}
We have $\omega_1(T) = \omega_2(T)=\frac{3}{2}$, $\omega_3(T)=\frac{3\sqrt{3}}{2}$, and $\omega_4(T)=3$.
\end{theo}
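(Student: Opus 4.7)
The plan is to combine Theorem \ref{theo:poly-w1} with Theorem \ref{theo:polygon-billiard} and the unfolding result of Theorem \ref{prop:triangle-billiards}, together with explicit upper-bound sweepouts that exploit the $D_3$-symmetry of $T$.

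\emph{Step 1 (the value of $\omega_1$).} The equilateral triangle $T$ inscribed in the unit circle has side length $\sqrt{3}$ and altitude $3/2$, so $W(T) = 3/2$, and Theorem \ref{theo:poly-w1} immediately gives $\omega_1(T) = 3/2$.

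\emph{Step 2 (classification of short billiards in $T$).} By Theorem \ref{prop:triangle-billiards}, every billiard trajectory in $T$ unfolds to a straight segment in the tiling of $\RR^2$ by reflected copies of $T$. In this tiling each of the three families of parallel edges has consecutive members at distance $3/2$; enumerating the short chords compatible with the endpoint conditions (orthogonal at an endpoint, or related by a lattice element for a closed orbit) yields the complete list of billiards in $T$ of length less than $3$: the three altitudes, each of length $3/2$, and the Fagnano orbit on the medial triangle, of length $3\sqrt{3}/2$. All other billiard trajectories in $T$ have length at least $3$.

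\emph{Step 3 (upper bounds).} We produce explicit $p$-sweepouts realizing the claimed values. For $\omega_2(T) \leq 3/2$, the naive $2$-sweepout by all chords only gives $\omega_2 \leq \diam(T) = \sqrt{3}$; instead we build a $2$-parameter family of $1$-cycles of mass at most $3/2$, interpolating $D_3$-symmetrically between parallel sweeps in each of the three altitude directions via transitional multi-component slices, and we verify that the pullback to the parameter space of the fundamental cohomology class of the relative $1$-cycle space is nontrivial in degree two. For $\omega_3(T) \leq 3\sqrt{3}/2$ we exhibit a $3$-sweepout whose maximal slice is a Fagnano orbit, and for $\omega_4(T) \leq 3$ a $4$-sweepout whose maximal slice is a pair of disjoint altitudes.

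\emph{Step 4 (lower bounds).} By Theorem \ref{theo:polygon-billiard}, each $\omega_p(T)$ is a sum of billiard lengths drawn from the list in Step 2. Together with the monotonicity $\omega_p \leq \omega_{p+1}$ and the upper bounds of Step 3, this pins down $\omega_2(T) = 3/2$ at once. The remaining alternatives $\omega_3 = 3/2$ and $\omega_4 = 3\sqrt{3}/2$ are ruled out by a Lusternik--Schnirelmann-type inequality: equal consecutive $p$-widths require the critical set at the common mass level to carry enough $\ZZ_2$-cohomological content, which the critical sets at $3/2$ (the three altitudes, together with nearby configurations forced by $D_3$-symmetry) and at $3\sqrt{3}/2$ (the Fagnano orbit and its symmetric partners) fail to provide. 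The principal obstacle is Step 3, especially the construction of the refined $2$-sweepout for $\omega_2$ that achieves maximum slice mass $3/2$ rather than the naive $\diam(T)=\sqrt{3}$; Step 2 is a finite check enabled by the unfolding theorem, and Step 4 is routine once Step 2 identifies the critical sets.
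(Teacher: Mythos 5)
Your overall scaffolding (width theorem for $\omega_1$, billiard/unfolding theorems for length quantization, explicit sweepouts for upper bounds, Lusternik--Schnirelmann for lower bounds) matches the paper, and Step~1 and the length classification in Step~2 are correct. But the proposal has genuine gaps precisely where the paper does real work.

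First, the upper bounds in Step~3 are the crux and are not constructed. For $\omega_2(T)\leq\frac32$, the paper does \emph{not} merely ``interpolate between three altitude sweeps'': it builds a map $\phi:\partial T\times\partial T\to\cZ_1(T,\partial T)$ sending a pair of boundary points to the union of the two inward perpendiculars up to their meeting point, shows $\max\MM(\phi)=\frac32$ by a nontrivial case analysis (using $\sin\frac\pi3$ and the side length $\sqrt3$), descends through $\Sym^2(\partial T)/W$, and precomposes with a map $\RR P^2\to\Sym^2(\partial T)/W$ given by a line's two intersection points with $\partial T$. Verifying the sweepout condition is then done via Lemma~\ref{lemm:check-sweepout} by tracking a pencil of lines through a vertex. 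Your one-sentence description of a ``$D_3$-symmetric interpolation with transitional multi-component slices'' does not yield this, and in particular does not address why the mass of the transitional slices stays below $\frac32$ rather than rising to $\diam(T)=\sqrt3$.

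Second, for $\omega_3(T)\leq\frac{3\sqrt3}{2}$ you assert a ``$3$-sweepout whose maximal slice is a Fagnano orbit,'' but producing such a family of $1$-cycles in $T$ directly (with the required degree-$3$ cohomological nontriviality and with controlled mass on every slice, not just the maximal one) is not at all straightforward, and you give no mechanism. The paper sidesteps this by realizing $T$ as (part of) the boundary of a tubular neighborhood of a regular tetrahedron in $\RR^3$, sweeping out by affine planes, and bounding each plane slice using the Crofton formula together with a sharp bound $\sup_\Pi\length(\Pi\cap Q)=3\ell$ for a tetrahedron $Q$ (Lemma~\ref{lemm:tet-intersection-length}). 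That geometric trick is essential and is absent from your proposal.

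Third, Step~4's lower bounds replace the paper's concrete Lusternik--Schnirelmann subadditivity with a speculative ``critical set cohomological content'' heuristic that is nowhere made rigorous in this setting. The paper simply subdivides $T$ into four copies of $\tfrac12 T$ and applies Lemma~\ref{lemm:LS} to get $\omega_3(T)\geq 3\,\omega_1(\tfrac12 T)=\tfrac94>\tfrac32$ and $\omega_4(T)\geq 4\,\omega_1(\tfrac12 T)=3$. Combined with the billiard-length quantization $\length\in\{\tfrac32\sqrt{a^2+ab+b^2}\}$, this immediately forces $\omega_3(T)\geq\tfrac{3\sqrt3}{2}$ and (with the doubled-sweepout upper bound) $\omega_4(T)=3$. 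You should use this subdivision argument; as stated, your critical-set reasoning is not a proof. Also note that the paper's $\omega_4$ upper bound comes from doubling the $2$-sweepout via $\Sym^2(\RR P^2)\simeq\RR P^4$, not from a bespoke family whose maximal slice is two disjoint altitudes.
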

In terms of the corresponding billiard, that $\frac 32$ is the length of the perpendicular bisector, $\frac{3\sqrt{3}}{2}$ is the length of the smaller equilateral triangle whose vertices are the midpoints of the edges of $T$ and $3$ is the combined length of two perpendicular bisectors. 

\begin{theo}\label{theo:low-widths-square}
We have $\omega_1(S) = \sqrt{2}$,  $\omega_2(S)=2$, and $\omega_3(S)=2\sqrt{2}$.
\end{theo}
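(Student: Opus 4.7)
The proof of Theorem \ref{theo:low-widths-square} follows the template of Theorem \ref{theo:low-widths-triangle}: we match explicit sweepout upper bounds against lower bounds that combine the billiard classification from Theorem \ref{theo:polygon-billiard} with Lusternik--Schnirelmann-style topological obstructions.

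For the upper bounds, the sharp equality $\omega_1(S) = \sqrt{2}$ follows from Theorem \ref{theo:poly-w1} since $W(S) = \sqrt{2}$. The bound $\omega_2(S) \le 2$ comes from the standard 2-sweepout of $S$ by all affine lines in $\RR^2$ (parametrized by $\mathbb{RP}^2$ with the line at infinity sent to the empty cycle); the maximum chord in $S$ is the diagonal of length $2$. For $\omega_3(S) \le 2\sqrt{2}$ we build an explicit 3-parameter family in $\cZ_1(S,\partial S;\ZZ/2)$ whose maximum slice is the union of two parallel sides of $S$, of total length $2\sqrt{2}$; this can be arranged by coupling the all-lines 2-sweepout with an additional parameter that produces a second, parallel translated chord, arranged so no slice exceeds the two-sides configuration.

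For the lower bounds, Theorem \ref{theo:polygon-billiard} identifies each $\omega_p(S)$ with a sum of lengths of billiard trajectories in $S$. Unfolding $S$ by successive reflections identifies such billiards with straight segments in the plane tiled by the lattice $\sqrt{2}\ZZ \oplus \sqrt{2}\ZZ$. A direct enumeration shows the billiards in $S$ of length at most $2\sqrt{2}$ are exactly (i) the four sides and the two 1-parameter families of chords perpendicular to pairs of parallel sides, each of length $\sqrt{2}$; (ii) the two diagonals, of length $2$; and (iii) disjoint unions of two of these totaling $2\sqrt{2}$. Hence within $[0,2\sqrt{2}]$ the only possible values of $\omega_p(S)$ are $\sqrt{2}$, $2$, and $2\sqrt{2}$. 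To prove $\omega_2(S) > \sqrt{2}$: by the Almgren--Pitts pull-tight procedure (applied after the reflection reduction of Theorem \ref{theo:polygon-billiard}), any 2-sweepout with maximum slice-mass arbitrarily close to $\sqrt{2}$ is homotopic to one factoring through an arbitrarily small flat-neighborhood of the stationary 1-cycles of mass $\le \sqrt{2}$; this stationary locus consists of the empty cycle together with two disjoint contractible chord-families, and so has vanishing $H^2(\cdot;\ZZ/2)$, forcing the pullback of Almgren's class $\lambda^2$ to be zero, contradicting the 2-sweepout property. The same argument at mass level $2$ (stationary locus: the above plus two isolated diagonals, so $H^3 = 0$) rules out $\omega_3(S) = 2$. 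Combined with the enumeration, $\omega_2(S) = 2$ and $\omega_3(S) = 2\sqrt{2}$.

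The main obstacle is the upper bound $\omega_3(S) \le 2\sqrt{2}$: naive 3-parameter families of chord pairs only give $\omega_3(S) \le 4$ (from two diagonals), and the sharp bound requires carefully exploiting the $D_4$-symmetry of $S$ when choosing the extra deformation parameter. A secondary concern is justifying the pull-tight homotopy in the polygon setting; the reflection trick from Theorem \ref{theo:polygon-billiard} is essential here, since a boundary analogue of the Wang--Wei regularity theory is not available.
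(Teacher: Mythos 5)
Your proposal gets the upper bound $\omega_1(S)=\sqrt 2$ and the upper bound $\omega_2(S)\le 2$ right, and the general strategy (explicit sweepouts versus billiard classification plus Lusternik--Schnirelmann) is the correct skeleton, but two of your key steps are wrong or unjustified.

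First, your upper bound $\omega_3(S)\le 2\sqrt 2$ via ``the all-lines $2$-sweepout coupled with a second, parallel translated chord'' does not work. If the extra parameter controls a parallel translate of the line, then near the diagonal direction the two chords can each have length close to $2$, so the sup of the family is close to $4$, not $2\sqrt 2$; there is no way to ``arrange'' the parameter to avoid nearly-coincident long chords while keeping the family a genuine $3$-sweepout (the $\ZZ_2$-nontriviality would be destroyed). The paper's construction is instead the family $\Phi([a:b:c:d]) = \partial(S\cap\{a xy + b x + c y + d < 0\})$ on $\RR P^3$, whose slices are intersections of $S$ (taken axis-aligned with half-side $r=\sqrt 2/2$) with hyperbolas having vertical/horizontal asymptotes; each arc of such a hyperbola is a monotone graph over the coordinate axes, and the elementary estimate $\int_a^b\sqrt{1+(f')^2}\le (b-a)+|f(b)-f(a)|$ for monotone $f$ bounds every slice by $2\sqrt 2$. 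Your construction gives only $\omega_3(S)\le 4$, as you yourself suspect.

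Second, your lower-bound argument via a ``pull-tight to a small flat-neighborhood of the stationary locus'' plus a Čech cohomology computation of that neighborhood is not a valid argument in this framework; pull-tight does not homotope a sweepout into a neighborhood of the critical set below a given mass level, and no such lemma is proved in the paper or cited. You never need this: the lower bound $\omega_2(S)\ge 2$ follows directly from Lemma \ref{lemm:LS} by cutting $S$ along a diagonal into two right isoceles triangles each of first width $1$, and the lower bound $\omega_3(S)\ge 2\sqrt 2$ follows by cutting $S$ into four half-scale squares to get $\omega_3(S)\ge 3\,\omega_1(\tfrac12 S)=\tfrac{3\sqrt 2}{2}>2$, and then combining this with the reflection/unfolding analogue of Lemma \ref{lemm:triangle-billiard-reflect} (endpoints in $(\sqrt 2\ZZ)^2$), which forces $\omega_3(S)$ to be a sum of lengths in $\{\sqrt 2\sqrt{a^2+b^2}\}$; since no such sum lies strictly between $2$ and $2\sqrt 2$, we get $\omega_3(S)\ge 2\sqrt 2$. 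Your billiard enumeration itself is essentially right; it is the way you try to turn it into a lower bound on $\omega_2$ that is both unnecessary and not rigorous.
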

Note that $\sqrt{2}$ is a vertical/horizontal segment and $2$ is a diagonal. 

\subsection{Organization}
The preliminaries are contained in Section \ref{sec:prelim}. The billiard theorem for arbitrary convex polygons is proven in Section \ref{sec:billiard} and is improved in the special case of an equilateral triangle in Section \ref{sec:triangle-billiards}. The first $p$-width of a convex polygon is computed in Section \ref{sec:w1} and some of the low $p$-widths of an eqilateral triangle (resp.\ square) are computed in Section \ref{sec:triangle} (resp.\ \ref{sec:square}). A brief discussion of Caccioppoli sets and flat chains modulo $2$ is contained in Appendix \ref{app:caccioppoli-sets}. 

\subsection{Acknowledgments}
O.C. was supported by a Terman Fellowship and an NSF grant (DMS-2304432). We are grateful to Michael Ren for showing us the proof of Lemma \ref{lemm:tet-intersection-length} and to Christos Mantoulidis for his interest in this note and for many discussions concerning the $p$-widths.

\section{Preliminaries} \label{sec:prelim}
We give a brief overview of the $p$-widths in the setting at hand. For a more comprehensive overview one may refer to \cite{MarquesNeves:app} and \cite[\S 2]{ChodoshMantoulidis:2d}. Below, $K$ will be a compact set in $\RR^2$ with Lipschitz boundary.

\subsection{Gromov--Guth $p$-widths} We refer to Appendix \ref{app:caccioppoli-sets} for the definitions of Caccioppoli sets and the set of relative flat $1$-chains $\cZ_1(K,\partial K)$. Facts (1) and (2) in Appendix \ref{app:caccioppoli-sets} give the following result  (due to Almgren \cite{Almgren:htpy} and Marques--Neves  \cite[Theorem 2.3]{MarquesNeves:app})
\begin{lemm}\label{lemm:cycles-weak-homotopy-equiv}
The map $\hat \Phi : \RR P^\infty \to \cZ_1(K,\partial K)$ defined by 
\[
[a_0:\dots:a_k:0:\dots] \mapsto \{ (x,y) \in K : a_0  + a_1 x + a_2 x^2 + \dots a_k x^k < 0 \}
\]
is a weak homotopy equivalence. 
\end{lemm}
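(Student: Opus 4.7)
The plan is to reduce the statement to an Almgren-type isomorphism for relative flat $1$-cycles on $K$, combined with a direct verification that $\hat\Phi$ realizes a generator of $\pi_1$. First I would check that $\hat\Phi$ is well-defined and continuous. Well-definedness on $\RR P^\infty$ holds because replacing $(a_0,\dots,a_k)$ by $(-a_0,\dots,-a_k)$ swaps $\{p<0\}\cap K$ with $\{p>0\}\cap K$; these two Caccioppoli sets differ by $K$ itself, which is a relative boundary, and so determine the same class in $\cZ_1(K,\partial K)$. Continuity in the flat topology follows because a small uniform perturbation of the coefficients produces a symmetric difference of small Lebesgue measure whose boundary length inside $K$ varies continuously (the polynomials with degenerate zero locus form a thin set that does not affect the flat norm).

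Next I would invoke the Almgren-type isomorphism $\pi_j(\cZ_1(K,\partial K);\ZZ/2) \cong H_{j+1}(K,\partial K;\ZZ/2)$ provided by Facts (1) and (2) of Appendix \ref{app:caccioppoli-sets}. For any compact $K\subset\RR^2$ with Lipschitz boundary (and in particular for a convex polygon), Lefschetz duality gives $H_2(K,\partial K;\ZZ/2)=\ZZ/2$, $H_1(K,\partial K;\ZZ/2)=0$, and $H_j(K,\partial K;\ZZ/2)=0$ for $j\geq 3$. Consequently $\cZ_1(K,\partial K)$ is an Eilenberg--MacLane space $K(\ZZ/2,1)$, and so has the weak homotopy type of $\RR P^\infty$.

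It then remains to show that $\hat\Phi$ induces an isomorphism on $\pi_1$. I would do this by restricting to $\RR P^1=\{[a_0:a_1]\}$: as $[a_0:a_1]$ traverses a generator of $\pi_1(\RR P^1)=\ZZ/2$, the sublevel set $\{a_0+a_1 x<0\}\cap K$ deforms from $\emptyset$ to $K$ through a family of vertical half-slices, which is the standard non-trivial loop in $\cZ_1(K,\partial K)$ (its Almgren invariant detects the fundamental class of the pair). Since both $\RR P^\infty$ and $\cZ_1(K,\partial K)$ are $K(\ZZ/2,1)$-spaces and both have the homotopy type of CW complexes, a continuous map between them that is an isomorphism on $\pi_1$ is a weak homotopy equivalence by Whitehead's theorem. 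The main obstacle in this outline is really the relative Almgren isomorphism, which is the content of the appendix facts being cited; it is standard but technical, and for the map $\hat\Phi$ itself everything reduces to the explicit $\pi_1$-computation just described.
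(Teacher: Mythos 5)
Your overall outline is correct and the conclusion is right, but you have misattributed the content of Facts (1) and (2). Those facts do \emph{not} supply the Almgren isomorphism $\pi_j(\cZ_1(K,\partial K);\ZZ_2)\cong H_{j+1}(K,\partial K;\ZZ_2)$, which is a deep theorem of Almgren in its own right. What Facts (1) and (2) give is something much more elementary and direct: $\cC(K,\partial K)$ is contractible (via the explicit half-plane truncation $\Omega\mapsto\Omega\cap\{x\le t\}$) and the complementation involution is a free $\ZZ_2$-action, so the quotient map $\cC(K,\partial K)\to\cZ_1(K,\partial K)$ is a double cover with contractible total space. That immediately exhibits $\cZ_1(K,\partial K)$ as a $K(\ZZ_2,1)$, with no need for the Almgren isomorphism, Lefschetz duality, or the long exact sequence of $(K,\partial K)$. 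Your route through Almgren plus Lefschetz duality is a valid (and more general) alternative, but it is both heavier and not what the cited appendix facts provide; if you want to use it you should cite Almgren/Marques--Neves directly for the isomorphism rather than derive it from the appendix.

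Two smaller points. First, $\pi_1(\RR P^1)=\ZZ$, not $\ZZ/2$; what you actually use is that the inclusion $\RR P^1\hookrightarrow\RR P^\infty$ sends the generator to the generator of $\pi_1(\RR P^\infty)=\ZZ_2$, and your description of the lift (from $\emptyset$ to $K$ in $\cC(K,\partial K)$, hence not a loop upstairs) correctly verifies that $\hat\Phi\circ\iota$ is nontrivial in $\pi_1(\cZ_1(K,\partial K))$. Second, you do not need Whitehead's theorem or any CW hypothesis at the end: once both spaces are $K(\ZZ_2,1)$'s, a map inducing an isomorphism on $\pi_1$ is automatically a weak homotopy equivalence because all higher homotopy groups vanish on both sides. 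With those adjustments your argument matches the intended one: reduce to the double-cover picture from the appendix and then verify that $\hat\Phi$ hits the $\pi_1$ generator by sweeping with vertical lines.
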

The Hurewicz theorem gives that $H^*(\cZ_1(K,\partial K);\ZZ_2) = \ZZ_2[\bar \lambda]$. 

For $X$ a finite dimensional cubical subcomplex of some cube $I^m$, a map 
\[
\Phi : X \to \cZ_1(K,\partial K)
\] 
has no concentration of mass if 
\[
\lim_{r\to 0} \sup\{ \MM(\Phi(x)\cap B_r(p)) : x \in K,p \in K\} = 0. 
\]

\begin{defi}
For $p \in \NN$, a $p$-sweepout is a continuous map $\Phi : X \to \cZ_1(K,\partial K)$ where $X$ is a finite dimensional cubical subcomplex of some cube $I^m$ so that $\Phi$ has no concentration of mass and $\Phi^*(\bar \lambda^k) \neq 0 \in H^k(X;\ZZ_2)$. 
\end{defi}
The following well-known lemma will be useful for checking that certain explicit maps are $p$-sweepouts:
\begin{lemm}\label{lemm:check-sweepout}
A map $\Phi : X\to \cZ_1(K,\partial K)$ is a $p$-sweepout if and only if the following holds:
\begin{itemize}
\item There's $\lambda \in H^1(X;\ZZ_2)$ so that $\lambda^p \neq 0 \in H^p(X;\ZZ_2)$ and
\item  for any $\gamma : S^1 \to X$ we have $\lambda(\gamma) \neq 0$ if and only if $\Phi \circ \gamma : S^1 \to \cZ_1(K,\partial K)$ is a $1$-sweepout. 
\end{itemize}
\end{lemm}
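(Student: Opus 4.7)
The plan is to take $\lambda := \Phi^*(\bar\lambda) \in H^1(X;\ZZ_2)$ and show that this is the unique cohomology class satisfying the two bulleted conditions. The first condition is then tautological, since $\lambda^p = \Phi^*(\bar\lambda^p)$, so $\lambda^p \ne 0$ iff $\Phi$ is a $p$-sweepout (assuming no concentration of mass, which I treat as implicit on both sides of the equivalence).

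For the second bullet, I would unwind the definitions. For any loop $\gamma : S^1 \to X$, the evaluation $\lambda(\gamma)$ means evaluating $\gamma^*\lambda \in H^1(S^1;\ZZ_2) \cong \ZZ_2$ on the fundamental class; by naturality this equals $(\Phi\circ\gamma)^*(\bar\lambda)$ evaluated on $[S^1]$. Thus $\lambda(\gamma)\ne 0$ iff $(\Phi\circ\gamma)^*(\bar\lambda) \ne 0$, which is precisely the statement that $\Phi\circ\gamma$ is a $1$-sweepout (the no-concentration-of-mass hypothesis passes from $\Phi$ to $\Phi\circ\gamma$ automatically). This gives the ``only if'' direction, with $\lambda := \Phi^*(\bar\lambda)$ witnessing the existence.

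For the ``if'' direction, suppose some $\lambda \in H^1(X;\ZZ_2)$ satisfies both bullets. The key step is to show that necessarily $\lambda = \Phi^*(\bar\lambda)$, from which $\lambda^p \ne 0$ gives $\Phi^*(\bar\lambda^p) \ne 0$, i.e. $\Phi$ is a $p$-sweepout. For this identification, I would use that over the field $\ZZ_2$ the universal coefficient theorem identifies $H^1(X;\ZZ_2)$ with $\Hom(H_1(X;\ZZ_2),\ZZ_2)$, so two classes in $H^1(X;\ZZ_2)$ are equal iff they evaluate identically on every class of $H_1(X;\ZZ_2)$. Since the Hurewicz map $\pi_1(X) \to H_1(X;\ZZ)$ is surjective (and then mod $2$), every class in $H_1(X;\ZZ_2)$ is represented by some $\gamma : S^1 \to X$. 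The second bullet then says precisely that $\lambda$ and $\Phi^*(\bar\lambda)$ agree on every such $\gamma$ (both are $1 \in \ZZ_2$ iff $\Phi\circ\gamma$ is a $1$-sweepout), so $\lambda = \Phi^*(\bar\lambda)$.

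The only mildly subtle point is the Hurewicz-type step, which is straightforward for cubical subcomplexes of $I^m$ (in particular CW complexes). No genuine obstacle arises; the lemma is essentially a restatement of the definition of $p$-sweepout via its action on $1$-cycles in $X$, made precise by the $\ZZ_2$-coefficient universal coefficient theorem.
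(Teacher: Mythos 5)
The paper does not actually supply a proof of this lemma; it is asserted as ``well-known'' and immediately put to use (e.g.\ in Lemma \ref{lemm:Phi-2-swp-T-is-swp}). Your proof is correct and is the natural one. The forward direction is just naturality of pullback together with the characterization of $1$-sweepouts via $\pi_1(\cZ_1(K,\partial K))\cong\ZZ_2$, and the reverse direction correctly combines the universal coefficient theorem over the field $\ZZ_2$ (so that $H^1(X;\ZZ_2)\cong\Hom(H_1(X;\ZZ_2),\ZZ_2)$, with no Ext term) with the mod-$2$ Hurewicz surjectivity, reducing the identity $\lambda=\Phi^*(\bar\lambda)$ to agreement on loop classes, which is exactly the content of the second bullet. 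Your observation that ``no concentration of mass'' must be taken as a standing implicit hypothesis is also the right reading: without it, the biconditional in the second bullet can fail in both directions along a loop $\gamma$ for which $\Phi\circ\gamma$ concentrates mass even though $(\Phi\circ\gamma)^*(\bar\lambda)\neq 0$, and the reverse implication would not pin down $\lambda=\Phi^*(\bar\lambda)$. That is indeed how the lemma is deployed in the paper, where continuity and no concentration of mass are checked separately before invoking Lemma \ref{lemm:check-sweepout} for the cohomological content.
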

Note that the Hurewicz theorem shows that $\tilde \Phi : S^1 \to \cZ_1(K,\partial K)$ is a $1$-sweepout if and only if $[\tilde \Phi] \not = 0 \in \pi_1(\cZ_1(K,\partial K),\emptyset)$, i.e.\ $\tilde\Phi$ does not lift to a loop under the double cover $\cC(K,\partial K) \to \cZ_1(K,\partial K)$.

We write $\cP_p=\cP_p(K)$ for the set of $p$-sweepouts. We can then define the $p$-widths as
\[
\omega_p(K) : = \inf_{\Phi \in \cP_p} \sup\{\MM(\Phi(x)) : x \in \dmn(\Phi)\}. 
\]
It's easy to see that $\omega_1(K)\leq\omega_2(K)\leq\dots$. We also have
\[
\lim_{p\to\infty} \omega_p(K)p^{-\frac 12} = \sqrt{\pi\area(K)}
\]
by \cite{LiokumovichMarquesNeves,ChodoshMantoulidis:2d}. Finally, we recall the Lusternick--Schnirelman inequality.
\begin{lemm}\label{lemm:LS}
    For $K,K_1,\dots,K_J\subset\RR^2$ Lipschitz domains with $K_1,\dots,K_J\subset K$ and the interior of $K_1,\dots,K_J$ pairwise disjoint, if $p_1+\dots+p_J = p$ then
    \[
    \omega_p(K) \geq \sum_{j=1}^J \omega_{p_j}(K_j). 
    \]
\end{lemm}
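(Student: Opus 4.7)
The plan is to run the standard Lusternik--Schnirelmann cup-length argument. Suppose toward a contradiction that some $p$-sweepout $\Phi : X \to \cZ_1(K,\partial K)$ satisfies $\sup_x \MM(\Phi(x)) < \sum_j \omega_{p_j}(K_j)$. Using the identification in Lemma \ref{lemm:cycles-weak-homotopy-equiv} to write $\Phi(x) = \partial E(x)$ for a Caccioppoli set $E(x)\subset K$, I would define a restricted sweepout
\[
\Phi_j : X \to \cZ_1(K_j,\partial K_j),\qquad \Phi_j(x) := \partial(E(x)\cap K_j),
\]
which is continuous in the flat topology and has no concentration of mass (the latter follows from no concentration for $\Phi$ together with the Lipschitz property of $\partial K_j$). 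The hypothesis that the $K_j$ have pairwise disjoint interiors yields the pointwise inequality $\sum_{j}\MM(\Phi_j(x)) \leq \MM(\Phi(x))$, which is the source of the desired mass budget.

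The key cohomological identification is $\Phi_j^*\bar\lambda_j = \Phi^*\bar\lambda =: \alpha \in H^1(X;\ZZ_2)$. By Lemma \ref{lemm:check-sweepout} it suffices to verify, on every loop $\gamma : S^1 \to X$, that $\Phi\circ\gamma$ is a $1$-sweepout of $K$ if and only if $\Phi_j\circ\gamma$ is a $1$-sweepout of $K_j$. This is an exercise in tracking the double cover $\cC(\cdot,\partial\cdot)\to\cZ_1(\cdot,\partial\cdot)$: non-lifting corresponds exactly to the underlying Caccioppoli set being exchanged with its complement around the loop, and the map $E\mapsto E\cap K_j$ intertwines complementation in $K$ with complementation in $K_j$. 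In particular, each factor $\alpha^{p_j}$ is a nonzero factor of $\alpha^{p_1}\cup\cdots\cup\alpha^{p_J} = \alpha^p\neq 0$.

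Next I would choose $c_j < \omega_{p_j}(K_j)$ with $\sum_j c_j \geq \sup_x \MM(\Phi(x))$. The pointwise inequality above forces, for every $x\in X$, at least one index $j$ with $\MM(\Phi_j(x)) < c_j$, so the sets $U_j := \{x\in X : \MM(\Phi_j(x))<c_j\}$ cover $X$. Since $\sup_{U_j}\MM(\Phi_j) \leq c_j < \omega_{p_j}(K_j)$, the restriction $\Phi_j|_{U_j}$ cannot be a $p_j$-sweepout of $K_j$, and hence $\alpha^{p_j}|_{U_j} = (\Phi_j|_{U_j})^*\bar\lambda_j^{p_j} = 0$. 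The standard cohomological cup-length principle --- classes vanishing on members of an open cover have vanishing product in $H^*(X;\ZZ_2)$ --- then gives $\alpha^p = 0$, contradicting that $\Phi$ is a $p$-sweepout.

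The main technical obstacle is openness of the $U_j$: mass is only lower semicontinuous under flat convergence, so $\{\MM(\Phi_j)<c_j\}$ need not be open a priori. The standard fix, which I would import from Marques--Neves \cite{MarquesNeves:app}, is to pass through the discretization of the space of sweepouts: on a sufficiently fine cubical subdivision of $X$, the masses at vertices control nearby masses up to arbitrarily small error, the relevant preimages become honest subcomplexes, and the cup-length argument then applies verbatim.
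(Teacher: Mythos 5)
The paper records this lemma as well-known and simply cites \cite[3.1]{LiokumovichMarquesNeves} rather than proving it; your argument reproduces the standard Lusternik--Schnirelmann cup-length argument from that reference, and is correct in substance, including your accurate identification of the openness/discretization issue at the end. One imprecision worth flagging: invoking Lemma~\ref{lemm:cycles-weak-homotopy-equiv} to ``write $\Phi(x)=\partial E(x)$'' suggests choosing a continuous global lift $E(\cdot)$ of $\Phi$ to $\cC(K,\partial K)$, which is exactly what a nontrivial sweepout forbids (the double cover pulled back along $\Phi$ is nontrivial when $\Phi^*\bar\lambda\neq 0$). No lift is actually needed: the restriction $E\mapsto E\cap K_j$ is a $1$-Lipschitz, $\ZZ_2$-equivariant map $\cC(K,\partial K)\to\cC(K_j,\partial K_j)$, so it descends directly to a continuous $R_j:\cZ_1(K,\partial K)\to\cZ_1(K_j,\partial K_j)$ with $R_j^*\bar\lambda_j=\bar\lambda$, and $\Phi_j:=R_j\circ\Phi$ is then well-defined with $\Phi_j^*\bar\lambda_j=\Phi^*\bar\lambda$ immediately --- this equivariance is also what your loop-tracking paragraph is really verifying, just less directly. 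The rest (the perimeter subadditivity from disjoint interiors, no concentration of mass, the relative-cohomology cup-length step) is fine.
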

This is well-known. See \cite[3.1]{LiokumovichMarquesNeves} for the case of Lipschitz domains.

\subsection{Double-well phase transition theory} 
We now describe an alternative method for computing the widths through a regularization of the problem. A smooth function $W:\RR\to\RR$ is a \emph{double well potential} if $W\geq 0$, $W(-t)=W(t)$, $tW'(t)<0$ for $0<|t|<1$, and $W''(\pm 1) > 0$. For $W$ a fixed double-well potential we can define the $\eps$-phase transition energy
\[
E_\eps[u] : = \int_K \frac \eps 2 |\nabla u|^2 + \frac 1 \eps W(u).
\]
A critical point of $E_\eps$ satisfies the Allen--Cahn equation
\[
\eps^2\Delta u = W'(u)
\]
along with the Neumann condition $\partial_\eta u =0$ along $\partial K \setminus V$. 

The second variation of $E_\cdot$ at a critical point is 
\[
\delta^2E_\eps|_u[\varphi] = \int_K \eps |\nabla \varphi|^2 + \frac 1 \eps W''(u) \varphi^2. 
\]
allowing us to define  $\Index_\eps(u;U)$ to be the maximal dimension of $V\subset C^\infty_c(U)$ so that $\delta^2E_\eps|_u[\varphi] < 0$ for $\varphi \in V\setminus\{0\}$ for $U\subset K$. We write $\Index_\eps(u) = \Index_\eps(u,K)$.

We let $\tilde\cP_p$ denote the space of $\ZZ_2$ equivariant maps $h : \tilde X \to H^1(K)\setminus\{0\}$ where $\tilde X \to X$ is a double cover of a $p$-dimensional cubical subcomplex of $I^m$ and $H^1(K)$ is the Sobolev space of functions whose first derivative is in $L^2$. We can define the $\eps$-phase transition widths by 
\[
\omega_{p,\eps}(K) := \inf_{h\in \tilde\cP_p} \max_{x\in\dmn(h)} E_\eps(h(x)).
\]
The following result shows the existence of a critical point achieving the width (at least for $\eps \leq \eps_0(p,K)$).
\begin{theo}[{\cite[Proposition 4.4]{Guaraco:1param}, \cite[Theorem 3.3]{GasparGuaraco1}, \cite[\S 2.4]{Dey:comp}}]\label{theo:allen-cahn-min-max}
If $\omega_{p,\eps}(K) < |K|/\eps$ then there's a critical point $u$ of $E_\eps$ with $E_\eps[u] = \omega_{p,\eps}(K)$, $|u_\eps| < 1$, and $\Index(u_\eps)\leq p$. 
\end{theo}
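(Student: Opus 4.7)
The plan is to realize $\omega_{p,\eps}(K)$ as the critical value of a $\ZZ_2$-equivariant min-max for $E_\eps$ on the Hilbert space $H^1(K)$ (endowed with the free $\ZZ_2$-action $u\mapsto -u$ coming from $W(-t)=W(t)$), and then upgrade the resulting critical point by truncation and the strong maximum principle.

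First I would verify the Palais--Smale condition for $E_\eps$. If $E_\eps[u_n]$ is bounded and $\delta E_\eps|_{u_n}\to 0$ in $H^1(K)^*$, then the Dirichlet part of the energy directly controls $\|\nabla u_n\|_{L^2}$, while testing the approximate Euler--Lagrange identity against $u_n$ and using the polynomial-type growth of $W'$ together with the Sobolev embedding $H^1(K)\hookrightarrow L^q(K)$ controls $\|u_n\|_{L^2}$. After extracting a weak $H^1$-limit $u$ and strong $L^q$-convergence, elliptic regularity applied to the limiting Allen--Cahn equation $\eps^2\Delta u=W'(u)$ with Neumann boundary data promotes the convergence to $H^1(K)$.

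With Palais--Smale in hand I would apply the equivariant min-max deformation lemma to the class $\tilde\cP_p$. Because $E_\eps$ is even, the negative $H^1$-gradient flow preserves the $\ZZ_2$-equivariance and so preserves $\tilde\cP_p$; absent a critical point at level $\omega_{p,\eps}(K)$, an equivariant isotopy would push every sweepout strictly below that level, contradicting the definition. This yields a critical point $u_\eps$ with $E_\eps[u_\eps]=\omega_{p,\eps}(K)$. The Morse-index bound $\Index_\eps(u_\eps)\leq p$ then comes from a Marino--Prodi perturbation argument: if the negative-definite subspace of $\delta^2 E_\eps|_{u_\eps}$ had dimension $>p$, then in a $\ZZ_2$-equivariant tubular neighborhood of the critical set those negative directions would, together with the cohomological-index (Fadell--Rabinowitz / Krasnoselskii genus) calculus on the sweepout class, allow one to deform an optimizing sweepout strictly below $\omega_{p,\eps}(K)$.

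Finally, the bound $|u_\eps|<1$ is obtained by truncation and the maximum principle. The hypothesis $\omega_{p,\eps}(K)<|K|/\eps$ combined with $W\geq 0$ lets one run the min-max within the subclass of maps taking values in $\{|u|\leq 1\}$: the truncation $u\mapsto\min(\max(u,-1),1)$ does not increase $E_\eps$ once $W$ is extended monotonically outside $[-1,1]$, and the hypothesis is precisely what guarantees that the truncated sweepout still lies strictly below the trivial barrier $E_\eps[u\equiv 0]=W(0)|K|/\eps$. Once $|u_\eps|\leq 1$, the strong maximum principle applied to $1\pm u_\eps$ (using $W'(\pm 1)=0$ and $W''(\pm 1)>0$), together with the Hopf lemma and the Neumann condition $\partial_\eta u_\eps=0$, forces $|u_\eps|<1$ strictly unless $u_\eps\equiv\pm 1$; the latter is excluded because it would give $E_\eps[u_\eps]=0$ while $\omega_{p,\eps}(K)>0$. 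I expect the main obstacle to be the Morse-index estimate, which requires carefully combining the equivariant cohomological index on $\tilde\cP_p$ with Marino--Prodi perturbation to accommodate possibly degenerate critical points.
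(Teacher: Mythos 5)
The paper does not prove this theorem; it is stated as a citation to Guaraco \cite{Guaraco:1param}, Gaspar--Guaraco \cite{GasparGuaraco1}, and Dey \cite{Dey:comp}, so there is no internal proof to compare against. That said, your sketch is a faithful reconstruction of the argument in those references: Palais--Smale plus the $\ZZ_2$-equivariant deformation lemma produce a critical point at the min-max level; truncation by $\min(\max(u,-1),1)$ (after monotonically extending $W$ outside $[-1,1]$) gives $|u_\eps|\leq 1$, and the strong maximum principle with $W'(\pm 1)=0$, $W''(\pm 1)>0$ upgrades this to strict inequality unless $u_\eps$ is a constant $\pm 1$, which is excluded since $\omega_{p,\eps}(K)>0$; and the index bound $\Index_\eps(u_\eps)\leq p$ comes from a Marino--Prodi-type perturbation combined with the cohomological index on the sweepout class, exactly as in \cite{GasparGuaraco1}. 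You correctly flag the index estimate as the main technical hurdle.

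One minor imprecision: you attribute the role of the hypothesis $\omega_{p,\eps}(K)<|K|/\eps$ to the barrier at $u\equiv 0$, writing $E_\eps[u\equiv 0]=W(0)|K|/\eps$, but these two quantities differ by the factor $W(0)$, which for the sine--Gordon potential is $2/\pi^2<1$. The truncation step itself does not need the hypothesis (it is free once $W$ is extended monotonically), and the strictness $|u_\eps|<1$ only needs $\omega_{p,\eps}(K)>0$. In the cited sources the hypothesis is there to guarantee that the min-max level lies strictly below the energy of the relevant trivial critical point, so it enters the argument for nontriviality rather than the truncation per se; it is worth tracking exactly which constant appears in the reference being invoked, since the statement here is a paraphrase of three slightly different formulations.
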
 
\subsection{Comparison between the min-max theories}
Let $h_0 : = \int_{-\infty}^\infty \HH'(t)^2 + W'(\HH(t))$ where $\HH(t)$ is the heteroclinic solution i.e.\ $\HH''(t) = W'(\HH(t))$ with $\HH(t) \to \pm 1$ when $t\to\pm\infty$. 

\begin{prop}[{\cite{Dey:comp} cf.\ \cite[Theorem 6.1]{GasparGuaraco1}}]\label{prop:dey-bdry}
    We have
    \[
    h_0^{-1}\lim_{\eps\to 0}\omega_{p,\eps}(K) = \omega_p(K). 
    \]
\end{prop}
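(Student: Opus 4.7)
The plan is to prove matching inequalities
\[
h_0^{-1}\limsup_{\eps\to 0}\omega_{p,\eps}(K) \leq \omega_p(K) \leq h_0^{-1}\liminf_{\eps\to 0}\omega_{p,\eps}(K),
\]
following the comparison scheme of Dey \cite{Dey:comp} (and \cite[Theorem 6.1]{GasparGuaraco1} in the closed-manifold case) while making the bookkeeping changes needed for the Neumann boundary. The structural inputs are Modica--Mortola $\Gamma$-convergence of $E_\eps$ to $h_0$ times the relative perimeter functional on Caccioppoli sets, the $\mathbb{Z}_2$-double cover $\cC(K,\partial K)\to \cZ_1(K,\partial K)$ given by complementation, and the weak homotopy equivalence of Lemma \ref{lemm:cycles-weak-homotopy-equiv}, which let us pass freely between discrete geometric sweepouts and equivariant $H^1$-valued sweepouts.

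For the upper bound, I would begin with an almost optimal $p$-sweepout $\Phi:X\to \cZ_1(K,\partial K)$, lift it equivariantly to a map $\tilde\Phi:\tilde X\to \cC(K,\partial K)$ and, for each $\eps>0$, compose with a regularized heteroclinic profile. Concretely, set $h_\eps(\tilde x)(y) = \mathbb{H}(d_\eps(y,\tilde\Phi(\tilde x))/\eps)$ where $d_\eps$ is a mollified signed distance to $\partial \tilde\Phi(\tilde x)$ inside $K$ (positive in $\tilde\Phi(\tilde x)$, negative outside). A standard one-dimensional expansion along the normal gives
\[
E_\eps[h_\eps(\tilde x)] = h_0\,\MM(\partial \tilde\Phi(\tilde x)) + o(1)
\]
uniformly in $\tilde x\in\tilde X$ once the no-concentration-of-mass property of $\Phi$ is used to control the error. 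The resulting map $h_\eps:\tilde X \to H^1(K)\setminus\{0\}$ is continuous and $\mathbb{Z}_2$-equivariant, hence lies in $\tilde\cP_p$, and sending $\eps\to 0$ yields the claimed inequality.

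For the lower bound, which is the main obstacle, I would extract a geometric $p$-sweepout from an almost optimal $h_\eps\in\tilde\cP_p$ by passing to a level set. The coarea formula together with the pointwise AM--GM bound $\sqrt{2W(u)}|\nabla u|\leq \tfrac{\eps}{2}|\nabla u|^2 + \tfrac{1}{\eps}W(u)$ gives
\[
\int_{-1}^{1}\sqrt{2W(t)}\,\MM(\partial\{h_\eps(\tilde x)>t\})\,dt \;\leq\; E_\eps[h_\eps(\tilde x)],
\]
where the boundaries are taken \emph{relative} to $\partial K$ so as to produce elements of $\cZ_1(K,\partial K)$. A Fubini/selection argument then chooses a continuous-in-$\tilde x$ level value $t(\tilde x)\in (-\delta,\delta)$ (or averages over a thin band of levels) so that
\[
\Phi_\eps(x) \;:=\; \{h_\eps(\tilde x)>t(\tilde x)\},\qquad \tilde x\mapsto x,
\]
descends from $\tilde X$ to a continuous map $X\to \cZ_1(K,\partial K)$ with $\sup_x \MM(\Phi_\eps(x))\leq h_0^{-1}\omega_{p,\eps}(K)+o(1)$ and no concentration of mass. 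Preservation of the cohomology class $\bar\lambda^p\neq 0$ follows because the double cover $\tilde X \to X$ corresponds to the double cover $\cC(K,\partial K)\to \cZ_1(K,\partial K)$ under $\Phi_\eps$, so the first-factor pullback is nontrivial and Lemma \ref{lemm:check-sweepout} applies. The most delicate point, compared to the closed-manifold arguments of \cite{Dey:comp,GasparGuaraco1}, is that the interface $\partial\{h_\eps>t\}$ may meet $\partial K$; one has to verify that Neumann regularity of $h_\eps$ together with the relative-cycle convention in $\cZ_1(K,\partial K)$ makes the level-set construction continuous and the Modica--Mortola mass bound robust up to $\partial K$. Once this is settled, taking $\eps\to 0$ gives the reverse inequality.
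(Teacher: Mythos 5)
The paper does not supply a proof of this proposition: it simply cites \cite{Dey:comp} (cf.\ \cite[Theorem 6.1]{GasparGuaraco1}) and asserts, in one sentence, that a careful examination of Dey's argument shows it is still valid for manifolds with smooth boundary and for polygonal regions in $\RR^2$. Your sketch is therefore doing more than the paper itself does; it is essentially a reconstruction of the Modica--Mortola/level-set scheme from the cited works, adapted to the Neumann setting, and as such the structural outline (heteroclinic recovery sequences built on a mollified signed distance for the upper bound, coarea plus $\sqrt{2W}$ AM--GM plus level-set selection for the lower bound, and equivariance to pass between $\cC(K,\partial K)$ and $\cZ_1(K,\partial K)$) is the right one and matches what the paper relies on.

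Two places where your sketch departs from how the cited proofs actually close the argument are worth flagging. First, the lower bound in \cite{Dey:comp,GasparGuaraco1} does not proceed by choosing a \emph{continuous-in-$\tilde x$} level value $t(\tilde x)$. Such a global continuous selection is not generally available; instead one discretizes the parameter space, chooses a good level on each cell using the coarea average, and then appeals to the Almgren--Pitts/Marques--Neves interpolation machinery to produce a genuinely continuous map in the flat topology with controlled mass and, importantly, with the no-concentration-of-mass property (which does not simply fall out of the uniform $BV$ bound, contrary to what your sketch suggests). Related to this, if one uses a nonzero level $t(\tilde x)$ then equivariance $\{h_\eps(-\tilde x)>t(-\tilde x)\}=K\setminus\{h_\eps(\tilde x)>t(\tilde x)\}$ requires $t(-\tilde x)=-t(\tilde x)$, which also has to be arranged; the simplest fix is to take $t\equiv 0$ and recover the mass bound from the discretization, which is what the references do. Second, your identification of the delicate boundary point is correct in spirit, but the additional ingredient that makes the extension go through is the local even reflection across the (flat or smooth) boundary together with the Neumann condition, which reduces boundary behavior to the interior case; this is exactly the mechanism the paper exploits elsewhere (Proposition \ref{prop:local-bounce}), and it is why the authors are comfortable asserting the extension without reproving Dey's theorem.
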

This is stated only for closed manifolds in \cite{Dey:comp} but a careful examination of the proof shows that it's still valid for manifolds with smooth boundary as well as for polygonal regions in $\RR^2$.

\section{Proof of the billiard theorem}\label{sec:billiard}
In this section we prove Theorem \ref{theo:polygon-billiard}. 

Fix $P$ a compact convex polygonal region in $\RR^2$ and fix $p\in \{1,2,\dots\}$. For $0< \eps \ll1$, let $u_\eps$ denote the $p$-parameter min-max critical point of $E_\eps$ guaranteed by Theorem \ref{theo:allen-cahn-min-max}. Later we will choose the ``sine--Gordon'' double well potential but we leave the choice of $W$ arbitrary for now.  
We have that $u_\eps$ solves
\[
\begin{cases}
    \eps^2 \Delta u_\eps = W''(u_\eps) & \textrm{in the interior of $P$}\\
    \partial_\eta u = 0 & \textrm{on $\partial P \setminus V$}
\end{cases}
\]
as well as $E_\eps[u]=\omega_{p,\eps}(K)$ and $\Index_\eps(u_\eps) \leq p$. 
\begin{lemm}\label{lemm:grad-vanishes-corner}
    $u\in C^1(P)$ and $|\nabla u| = 0$ at the vertices $V$.
\end{lemm}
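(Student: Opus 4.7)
The plan is to prove smoothness on $\overline P\setminus V$ first and then perform a corner analysis at each vertex. The main obstacle is justifying a local expansion of $u_\eps$ near a vertex given only that it solves a nonlinear equation with $L^\infty$ forcing.

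First, I would establish $u_\eps\in C^\infty(\overline P\setminus V)$. In the interior this is the standard elliptic bootstrap for $\eps^2\Delta u=W'(u)$ with $|u|<1$ (so $W'(u)$ is smooth and bounded). At a point $q\in\partial P\setminus V$, choose a small ball $B$ disjoint from $V$ and reflect $u_\eps|_{B\cap P}$ across the flat edge $\partial P\cap B$. The Neumann condition $\partial_\eta u_\eps=0$ on the edge ensures the reflected extension is $C^1$ across $\partial P\cap B$, and since $W$ is even the extension still satisfies $\eps^2\Delta u=W'(u)$ on $B$; interior elliptic regularity then yields smoothness up to the edge.

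Next, fix a vertex $v\in V$ with interior angle $\theta=\theta(v)<\pi$ (by convexity of $P$). Work in polar coordinates $(r,\phi)$ centered at $v$ with the two adjacent edges along $\phi=0$ and $\phi=\theta$. Since $|u_\eps|<1$, the forcing $f:=\eps^{-2}W'(u_\eps)$ lies in $L^\infty$, and $\Delta u_\eps=f$ becomes a Neumann problem on a convex sector. Expand $u_\eps=\sum_{k\ge 0}a_k(r)\cos(k\pi\phi/\theta)$ in the Neumann eigenbasis of $-\partial_\phi^2$ on $[0,\theta]$; the modes satisfy
\[
a_k''+r^{-1}a_k'-(k\pi/\theta)^2 r^{-2}a_k=f_k(r),\qquad f_k\in L^\infty.
\]
The bounded homogeneous solutions are $1$ (for $k=0$) and $r^{k\pi/\theta}$ (for $k\ge 1$), and every positive exponent $k\pi/\theta$ is strictly greater than $1$ because $\theta<\pi$. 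Combined with standard ODE bounds for the particular solutions, this gives a local expansion $u_\eps(r,\phi)=u_\eps(v)+O(r^{\pi/\theta})$ with $\pi/\theta>1$, so $u_\eps\in C^{1,\alpha}$ near $v$ and $|\nabla u_\eps|(v)=0$. A cleaner packaging of the same conclusion is the Kondratiev--Grisvard regularity theory for Neumann problems on convex polygonal domains with bounded right hand side, which gives $u_\eps\in W^{2,p}_{\mathrm{loc}}$ near $v$ for every $p<\infty$; the vanishing of $\nabla u_\eps$ at $v$ follows because every nontrivial singular corner exponent exceeds $1$.

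Combining the two regions yields $u_\eps\in C^1(P)$ and $|\nabla u_\eps|(v)=0$ at each $v\in V$. The delicate point I expect is justifying the mode analysis for the nonlinear equation, which I would close by a bootstrap: a crude Hölder estimate for $u_\eps$ from Grisvard upgrades the regularity of $f=\eps^{-2}W'(u_\eps)$, which feeds back into sharper modal estimates so that the linear corner expansion applies without loss.
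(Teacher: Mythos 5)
Your proof is correct, and the core insight matches the paper's: convexity makes every interior angle $\theta<\pi$, so the lowest nonconstant Neumann corner exponent $\pi/\theta$ exceeds $1$ and the solution is $C^{1,\alpha}$ up to the vertex. Where you diverge is in how the gradient is shown to vanish at the vertex. You extract $|\nabla u_\eps|(v)=0$ directly from the mode expansion $u_\eps=u_\eps(v)+O(r^{\pi/\theta})$, which requires carrying out (or invoking in full) the Kondratiev--Grisvard asymptotics. The paper instead treats the corner regularity result as a black box giving $u_\eps\in C^1(P)$, and then uses a two-line linear-algebra observation: the Neumann condition on each adjacent edge forces $\nabla u_\eps$ to be tangent to that edge, and by continuity $\nabla u_\eps(v)$ must be tangent to both edges, hence zero since they span $\RR^2$ at a convex vertex. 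Your route proves more (the $C^{1,\alpha}$ rate and the explicit expansion) at the cost of needing to justify the series manipulation for the nonlinear problem via the bootstrap you sketch; the paper's route is leaner, deferring entirely to the cited regularity reference and keeping the vanishing-of-gradient step elementary. Both are sound.
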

\begin{proof}
Since $P$ is convex, all vertices have angles $<\pi$. Thus, we have that $u_\eps \in C^1(P)$ by e.g.\ \cite{AK:elliptic}. Since $u_\eps$ has Neumann boundary conditions on all edges, we thus have $|\nabla u_\eps|=0$ at a vertex (since the gradient needs to vanish in two linearly independent directions). 
\end{proof}
Using this we can prove non-positivity of the ``discrepancy function'' as in \cite[Lemma 3.5]{HT}:
\begin{coro}
    We have $\frac \eps 2 |\nabla u_\eps|^2 \leq \frac{1}{\eps} W(u_\eps)$ on $P$. 
\end{coro}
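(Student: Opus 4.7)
The plan is to establish the pointwise inequality by a maximum-principle argument that partitions into three cases depending on where the maximum of the discrepancy function lies. Set $\xi_\varepsilon := \frac{\varepsilon}{2}|\nabla u_\varepsilon|^2 - \frac{1}{\varepsilon}W(u_\varepsilon)$; this is continuous on $\bar P$ by Lemma~\ref{lemm:grad-vanishes-corner} and smooth on $\bar P \setminus V$ by interior and up-to-smooth-boundary elliptic regularity, so the maximum $M := \max_{\bar P}\xi_\varepsilon$ is attained at some $x_0 \in \bar P$. If $x_0 \in V$, then Lemma~\ref{lemm:grad-vanishes-corner} gives $|\nabla u_\varepsilon(x_0)| = 0$, and hence $M = -\frac{1}{\varepsilon}W(u_\varepsilon(x_0)) \leq 0$. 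If $x_0$ lies on the relative interior of an edge of $\partial P$, the Neumann condition $\partial_\eta u_\varepsilon = 0$ allows even reflection of $u_\varepsilon$ across that edge to produce a $C^\infty$ Allen--Cahn solution on a two-sided neighborhood of $x_0$ (the extension is $C^1$ by the matching of normal derivatives across the edge and is then bootstrapped to $C^\infty$ by elliptic regularity), turning the boundary maximum into an interior one.

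For the remaining interior case I would appeal to Modica's strong maximum principle for the discrepancy, as in \cite[Lemma~3.5]{HT}. The key identity $\Delta\xi_\varepsilon = \varepsilon(|\nabla^2 u_\varepsilon|^2 - (\Delta u_\varepsilon)^2)$ (obtained by direct computation from $\varepsilon^2\Delta u_\varepsilon = W'(u_\varepsilon)$), combined with a Kato-type bound for $|\nabla u_\varepsilon|$, yields an elliptic differential inequality for $\xi_\varepsilon$ on the open set $\{|\nabla u_\varepsilon| > 0\}$; the strong maximum principle then forces $\xi_\varepsilon \equiv M$ on the connected component $U$ of $\{|\nabla u_\varepsilon| > 0\}$ containing $x_0$. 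The boundary $\partial U$ inside $\bar P$ is non-empty: otherwise $|\nabla u_\varepsilon| > 0$ on all of $\bar P$, contradicting the existence of a max/min of $u_\varepsilon$ on the compact set $\bar P$ (at an interior or smooth-boundary max/min the Neumann condition forces $\nabla u_\varepsilon = 0$, and at a vertex Lemma~\ref{lemm:grad-vanishes-corner} does). At any such $y \in \partial U$ one has $|\nabla u_\varepsilon(y)| = 0$, hence $\xi_\varepsilon(y) = -\frac{1}{\varepsilon}W(u_\varepsilon(y)) \leq 0$, and by continuity $M = \xi_\varepsilon(y) \leq 0$.

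The genuine obstacle here is the interior Modica strong-maximum-principle step, which I would cite from \cite[Lemma~3.5]{HT} rather than reproving. The polygon-specific adjustments---reflection across smooth edges and handling vertices via Lemma~\ref{lemm:grad-vanishes-corner}---are routine. Note that convexity of $P$ does not enter this corollary directly; it was used only implicitly, through its role in securing $u_\varepsilon \in C^1(\bar P)$ and the vanishing of $\nabla u_\varepsilon$ at the vertices in Lemma~\ref{lemm:grad-vanishes-corner}.
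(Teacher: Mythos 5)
Your proposal is correct and follows essentially the same route as the paper: consider the discrepancy $\xi_\eps$, rule out a vertex maximum via Lemma~\ref{lemm:grad-vanishes-corner}, reduce an edge maximum to an interior one by even reflection across that edge, and then cite the Modica/Hutchinson--Tonegawa maximum principle for the interior case. The only difference is cosmetic: the paper rescales to $\eps = 1$ and defers the entire interior argument (the connected-component-of-$\{|\nabla u_\eps| > 0\}$ mechanism, which you correctly spell out) to the citations \cite[(3)]{Modica} and \cite[Lemma~3.5]{HT}, whereas you reconstruct it in brief.
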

\begin{proof}
    By scaling we can assume $\eps=1$ which we now do for simplicity. Let $\xi = \frac 12 |\nabla u|^2 - W(u)$. Note that $\xi \leq 0$ at a vertex by combining Lemma \ref{lemm:grad-vanishes-corner} with $W \geq 0$. Thus, if $\max_P \xi >0$ the maximum is either attained at an interior point or else on $\partial P\setminus V$ (the interior of an edge). In the latter case we can locally reflect across the edge (even reflection) to reduce to the case of an interior (local) maximum. This is a contradiction by applying the maximum principle to \cite[(3)]{Modica} (cf.\ \cite[Lemma 3.5]{HT}). 
\end{proof}

The energy density $\mu_\eps:= \frac{\eps}{2}|\nabla u_\eps|^2 + \frac 1 \eps W(u_\eps) \, dx dy$ defines a Radon measure on $K$ with mass $\omega_{p,\eps}(K)$. We can pass to a subsequence $\eps_j\to 0$ and assume that $\mu_{\eps_j} \rightharpoonup \mu$ for $\mu$ a Radon measure (actually one may prove that this convergence occurs in the varifold sense, cf.\ \cite{HT}). 

\begin{lemm}\label{lemm:mass-bd-vertex}
    For $p \in V$ and $0<r\leq r_0=r_0(p)$ independent of $\eps$ we have $\mu_\eps(B_r(p)) \leq C r$ for $C$ independent of $\eps,r$.
\end{lemm}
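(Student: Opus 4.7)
The plan is to reduce this local mass bound to the standard interior Hutchinson--Tonegawa monotonicity for Allen--Cahn via an even reflection of $u_\eps$ across an edge of $P$ meeting $p$.

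Setup: let $e_1,e_2$ denote the two edges of $P$ incident to $p$ and $\alpha\in(0,\pi)$ the interior angle at $p$. Choose $r_0>0$ smaller than the distance from $p$ to any edge of $P$ not containing $p$, so that $B_{r_0}(p)\cap P$ coincides with the intersection of $B_{r_0}(p)$ with the infinite wedge $W_\alpha$ spanned by $e_1,e_2$.

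The first step is to use the Neumann condition $\partial_\eta u_\eps=0$ on $e_1\setminus V$, combined with Lemma \ref{lemm:grad-vanishes-corner} (which gives $u_\eps\in C^1(P)$ and $\nabla u_\eps(p)=0$), to define the even reflection $\tilde u_\eps$ of $u_\eps$ across the line containing $e_1$. Since $W$ is even, the Allen--Cahn equation is preserved under reflection, so $\tilde u_\eps$ is a smooth interior solution of Allen--Cahn on the reflected region, is $C^1$ across $e_1$ (with matching normal derivative equal to zero), and still satisfies the Neumann condition on the remaining portions of $\partial P\setminus V$. Intersected with $B_{r_0}(p)$, this reflected region is exactly the wedge $W_{2\alpha}$ of total angle $2\alpha\in(0,2\pi)$.

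The second step is to apply a Hutchinson--Tonegawa style monotonicity formula \cite{HT} to $\tilde u_\eps$ on the wedge $W_{2\alpha}$. The interior derivation tests the equation against a radial vector field $X(x)=(x-p)\,\chi(|x-p|/\rho)$ and integrates by parts; the key observation is that $X$ is tangent to both bounding rays of $W_{2\alpha}$, so the Neumann condition $\partial_\eta\tilde u_\eps=0$ on those rays kills all boundary contributions, and one obtains that
\[
r\mapsto e^{Cr}\,\frac{\tilde\mu_\eps(B_r(p)\cap W_{2\alpha})}{r}
\]
is non-decreasing for $r\leq r_0$, with $C$ independent of $\eps$. Evaluating at $r=r_0$ and combining with the uniform a priori energy bound $\tilde\mu_\eps(B_{r_0}(p)\cap W_{2\alpha})\leq 2\mu_\eps(P)\leq 2\omega_{p,\eps}(P)\leq C$ (uniform in $\eps$ by Proposition \ref{prop:dey-bdry}), we conclude
\[
\mu_\eps(B_r(p))\;\leq\;\tilde\mu_\eps(B_r(p)\cap W_{2\alpha})\;\leq\;C' r
\]
for all $0<r\leq r_0$, with $C'$ independent of $\eps$.

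The main point to verify is the wedge monotonicity formula: it is not stated explicitly in \cite{HT}, but follows immediately from their derivation because the radial vector field $X$ is tangent to the bounding rays of $W_{2\alpha}$ and the Neumann condition makes all boundary terms vanish. An alternative avoiding this adaptation is to iterate reflections a bounded number of times (controlled by the minimum interior angle of $P$, hence finite) to extend $\tilde u_\eps$ to a $C^1$ Allen--Cahn solution on an open planar ball containing $B_{r_0}(p)$, to which the standard interior Hutchinson--Tonegawa monotonicity directly applies.
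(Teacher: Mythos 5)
Your core mechanism coincides with the paper's: near the vertex $p$, the boundary of $P$ consists of two rays emanating from $p$, so the radial vector field $X(x)=(x-p)\,\chi(|x-p|/\rho)$ used in the Allen--Cahn monotonicity computation is \emph{tangent} to $\partial P\cap B_{r_0}(p)$, and the Neumann condition $\partial_\eta u_\eps = 0$ makes all boundary contributions from the Pohozaev integration by parts vanish. This is precisely the paper's observation that ``no boundary correction is necessary.'' That said, the reflection across $e_1$ to pass from the wedge $W_\alpha$ to $W_{2\alpha}$ is superfluous: the tangency argument applies directly to $u_\eps$ on $W_\alpha\cap B_{r_0}(p)$ for the same reason, so nothing is gained. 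More importantly, your proposed alternative of iterating reflections to extend $u_\eps$ to a $C^1$ solution on a full disk $B_{r_0}(p)$ does \emph{not} work for a general convex polygon: reflections across the two rays bounding $W_\alpha$ tile a neighborhood of $p$ consistently only when $\alpha=\pi/k$ for some integer $k$, and the interior angle $\alpha\in(0,\pi)$ of an arbitrary convex vertex need not be of that form.

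The substantive gap is that your argument never invokes the non-positivity of the discrepancy $\xi_\eps=\frac{\eps}{2}|\nabla u_\eps|^2-\frac{1}{\eps}W(u_\eps)$, which the paper establishes on all of $P$ (via a reflection-plus-maximum-principle argument, using Lemma~\ref{lemm:grad-vanishes-corner} to control the vertices) in the Corollary immediately preceding this Lemma, and then explicitly cites. The Pohozaev identity you describe gives, in $n=2$,
\[
\frac{d}{d\rho}\left(\frac{\mu_\eps(B_\rho(p)\cap P)}{\rho}\right)
= \frac{1}{\rho}\int_{\partial B_\rho(p)\cap P}\eps\left(\frac{\partial u_\eps}{\partial r}\right)^2
- \frac{1}{\rho^{2}}\int_{B_\rho(p)\cap P}\xi_\eps,
\]
and the claimed monotonicity of $\mu_\eps(B_\rho(p)\cap P)/\rho$ (no exponential weight is needed on a flat background) holds only once one knows $\xi_\eps\leq 0$. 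Without this input the monotonicity does not follow, and your proof does not close.
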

\begin{proof}
    Fix $r_0$ small enough so that $V\cap B_{r_0}(p) = \{p\}$. The, as in \cite[Lemma 3.1]{HT} or \cite[\S 4]{LPS} the monotonicity formula for Allen--Cahn equation, along with non-positivity of the discrepancy function gives
    \[
    r^{-1} \mu_\eps(B_r(p)) \leq r_0^{-1}\mu_\eps(B_{r_0}(p))
    \]
    for $0<r<r_0$. Since the boundary of $P$ consists of straight lines and $u_\eps$ satisfies Neumann conditions there, no boundary correction (as in \cite{LPS}) is necesssary. 
\end{proof}
We now fix $W(t) = \frac{1+\cos(\pi t)}{\pi^2}$ so that the integrability results from \cite{LW,ChodoshMantoulidis:2d}  are applicable. 
\begin{prop}\label{prop:local-bounce}
    For any $x \in P\setminus V$, there's $r>0$ so that $V\cap B_r(x) = \emptyset$ and $\mu\lfloor B_r(x) = \sum_{j=1}^J  \cH^1\lfloor \eta_j$ for $\eta_j$ line segments in $P \cap B_r(x)$ with endpoints in $\partial(P \cap B_r(x))$. If $x \in \partial P\setminus V$ then segments $\eta_j$ that are not contained in $\partial P$ but intersect $\partial P$ come in pairs that bounce off of $\partial P$ and a segment $\eta_j \subset \partial P \cap B_r(x)$ must be equal to $\partial P \cap B_r(x)$. 
\end{prop}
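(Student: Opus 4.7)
The plan is to reduce the statement to the closed-surface interior result of \cite{ChodoshMantoulidis:2d} via an even-reflection argument across straight boundary edges. For $x$ in the topological interior of $P$, I would choose $r>0$ small so that $\overline{B_r(x)} \subset P \setminus \partial P$ and $V \cap B_r(x) = \emptyset$. Then $u_\eps$ solves Allen--Cahn on $B_r(x)$, and the local Wang--Wei curvature estimates \cite{WangWei,WangWei2} together with sine--Gordon integrability give, as in \cite{ChodoshMantoulidis:2d}, that $\mu\lfloor B_r(x)$ is a finite sum of $\cH^1$-measures on line segments with endpoints in $\partial B_r(x)$.

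The substantive case is $x \in \partial P \setminus V$. Since $V$ is finite and $x \notin V$, I choose $r$ so small that $V \cap B_r(x) = \emptyset$ and $B_r(x) \cap \partial P$ lies on a single edge $e$ contained in a line $L$. Let $R : \RR^2 \to \RR^2$ denote reflection across $L$, and define $\tilde u_\eps$ on $B_r(x)$ (viewed as a full disk straddling $L$) by $\tilde u_\eps = u_\eps$ on $P \cap B_r(x)$ and $\tilde u_\eps(y) = u_\eps(R(y))$ on $R(P) \cap B_r(x)$. The Neumann condition $\partial_\eta u_\eps = 0$ along $e$, combined with the straightness of $e$ and the invariance of Allen--Cahn under Euclidean reflection, gives by standard even-reflection regularity that $\tilde u_\eps$ is smooth on $B_r(x)$ and solves Allen--Cahn there. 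The corresponding energy density $\tilde \mu_\eps$ is $R$-invariant and restricts to $\mu_\eps$ on $P \cap B_r(x)$, so the limit $\tilde \mu$ is $R$-invariant and satisfies $\tilde \mu\lfloor(P \cap B_r(x)) = \mu\lfloor(P \cap B_r(x))$.

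To apply the interior result to $\tilde u_\eps$, I need a local index bound uniform in $\eps$. Each $\tilde \varphi \in C_c^\infty(B_r(x))$ splits $R$-orthogonally as $\tilde \varphi^+ + \tilde \varphi^-$, and $\delta^2 E_\eps|_{\tilde u_\eps}(\tilde \varphi^\pm) = 2\,\delta^2 E_\eps|_{u_\eps}(\tilde \varphi^\pm|_P)$. The restrictions $\tilde \varphi^+|_P$ and $\tilde \varphi^-|_P$ lie in $H^1(P)$ (the first is Neumann-admissible, the second vanishes on $e$), so each negative-definite subspace has dimension at most the index of $u_\eps$, bounded by $p$. Thus $\Index_\eps(\tilde u_\eps, B_r(x)) \leq 2p$, and the interior argument yields $\tilde \mu\lfloor B_r(x) = \sum_j \cH^1 \lfloor \tilde \eta_j$ for line segments $\tilde \eta_j$ with endpoints on $\partial B_r(x)$. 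By $R$-invariance the family is closed under $R$. A segment $\tilde \eta_j \subset L$ must fill the whole chord $L \cap B_r(x)$, giving the conclusion $\eta_j = \partial P \cap B_r(x)$. A segment disjoint from $L$ is paired with a distinct reflected segment, exactly one of which lies in $P$. A segment crossing $L$ transversally at $q \in e$ is either perpendicular to $L$ (contributing a single half-segment hitting $\partial P$ orthogonally) or paired with a distinct reflected partner; in the latter case the two half-segments restrict to $P \cap B_r(x)$ as a pair meeting at $q$ at equal angles to $e$, which is precisely the specular reflection condition.

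The main obstacle is the local index bookkeeping: one must justify that the doubled bound $\Index_\eps(\tilde u_\eps, B_r(x)) \leq 2p$ suffices for the structural conclusions of \cite{ChodoshMantoulidis:2d} to apply purely locally on $B_r(x)$, i.e., without any global index or boundary hypothesis on $\tilde u_\eps$. Granted the local nature of those estimates, the even-reflection argument cleanly sidesteps the absence of a boundary Wang--Wei estimate (as flagged in the introductory remarks) and reduces the problem entirely to the interior setting.
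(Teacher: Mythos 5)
Your proposal follows the paper's approach exactly in spirit: reduce the boundary case to the interior result of \cite{ChodoshMantoulidis:2d} by even reflection across the straight edge, bound the doubled index by $2p$ via an even/odd decomposition, and then read off the segment structure from the $R$-invariance of the limit. The interior case and the reflection set-up are fine, and the index bound is the right idea (the paper phrases it as $I_D \leq I_N \leq p$ and uses an eigenfunction basis to push the estimate through, which is a bit more careful than "each part contributes at most $p$," but the mechanism is the same).

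There is a genuine gap, however, in how you pass from $\tilde\mu$ back to $\mu$. You write that $\tilde\mu\lfloor(P \cap B_r(x)) = \mu\lfloor(P \cap B_r(x))$, but this is false on the edge $e$ itself: energy concentrating on $e$ is counted from both sides in $\tilde\mu$ but only from the $P$-side in $\mu$, so for a symmetric ball $B_\rho(q)$ around $q \in e$ one has $\tilde\mu(B_\rho(q)) = 2\,\mu(B_\rho(q) \cap P)$, i.e.\ $\mu$ picks up \emph{half} the multiplicity of the boundary chord. Consequently, if $\tilde\mu$ carries the chord $L \cap B_r(x)$ with odd multiplicity $m$, then $\mu$ would carry $\partial P \cap B_r(x)$ with multiplicity $m/2 \notin \ZZ$, and the conclusion $\mu\lfloor B_r(x) = \sum_j \cH^1\lfloor\eta_j$ would simply fail. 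So one must separately prove that the boundary chord appears in the list $\tilde\eta_j$ with \emph{even} multiplicity. This does not follow from $R$-invariance alone (the chord is already $R$-invariant regardless of its multiplicity). The paper proves it by localizing near a point of $L$ not touched by the other segments and applying the structure theory of Hutchinson--Tonegawa: since $\tilde u_{\eps_j}$ is an even reflection, it converges to the \emph{same} constant $\pm 1$ on both sides of the chord, and \cite[Theorem 1(4)]{HT} then forces even multiplicity. Without this step the proof is incomplete, and without the correct $\tilde\mu$-to-$\mu$ bookkeeping you would not even see the issue.
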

Here $\cH^1$ is the $1$-dimensional Hausdorff measure (i.e.\ $(\cH^1\lfloor \ell)(A)$ is the $1$-dimensional Lebesgue measure of $\ell \cap A$). 
\begin{proof}
    For $x$ in the interior of $P$ this follows from the proof of \cite[Theorem 1.2]{ChodoshMantoulidis:2d}. For $x \in \partial P \setminus V$ choose $r>0$ sufficiently small so that $V\cap B_r(x)=\emptyset$. Up to a dilation, translation, and isometry we thus can assume that $u_{\eps_j}$ solves $\eps^2 \Delta u_{\eps_j} = W'(u_\eps)$ on $B^+ = \{(x,y) \in B_1 : y\geq 0\}$ with $\partial_y u_{\eps_j} = 0$ on $\partial_0B^+ = \{(x,0) : |x|<1\}$. Let $\tilde u_{\eps_j}$ denote the even reflection across $\partial_0B^+$. Elliptic estimates give $\tilde u_{\eps_j} \in C^\infty_\textrm{loc}(B)$ for $B=B_1$. 
    \begin{claim}
        $\Index_{\eps_j}(\tilde u_{\eps_j};B) \leq 2p$.
    \end{claim}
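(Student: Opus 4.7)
The plan is to exploit the reflection symmetry $\sigma(x,y)=(x,-y)$, under which $\tilde u_{\eps_j}$ is invariant by construction. Consequently the coefficient $W''(\tilde u_{\eps_j})$ in the second variation $Q:=\delta^2 E_{\eps_j}|_{\tilde u_{\eps_j}}$ is also $\sigma$-invariant. Decomposing any test function $\varphi$ on $B$ as $\varphi=\varphi^e+\varphi^o$ with
\[
\varphi^{e/o}(x,y)=\tfrac{1}{2}\bigl(\varphi(x,y)\pm\varphi(x,-y)\bigr),
\]
the cross term in $Q(\varphi^e+\varphi^o)$ integrates an odd function of $y$ over the $\sigma$-symmetric domain $B$ and so vanishes, giving $Q(\varphi)=Q(\varphi^e)+Q(\varphi^o)$. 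Hence $Q$ diagonalizes with respect to the parity splitting, and
\[
\Index_{\eps_j}(\tilde u_{\eps_j};B)=k_e+k_o,
\]
where $k_e$ (resp.\ $k_o$) is the maximal dimension of a negative subspace for $Q$ among even (resp.\ odd) test functions.

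The second step is to bound $k_e,k_o\leq p$ by restriction to $B^+$ followed by extension to $P$. For an even test function $\varphi^e$, parity forces $\partial_y\varphi^e(x,0)=0$, so $\varphi^e|_{B^+}$ is Neumann-compatible along $\partial_0 B^+$. Since $\varphi^e$ is compactly supported in $B$, its support avoids the curved part of $\partial B^+$, so extending by zero yields a function on $P$ admissible for $\delta^2 E_{\eps_j}|_{u_{\eps_j}}$. The quadratic form on the restriction is exactly half of $Q(\varphi^e)$, so negativity and linear independence are preserved, giving $k_e\leq\Index_{\eps_j}(u_{\eps_j};P)\leq p$. The odd case is parallel: $\varphi^o$ vanishes on $\partial_0 B^+$ by parity, so $\varphi^o|_{B^+}$ extends by zero across the flat edge to an $H^1$ function on $P$, and the same computation gives $k_o\leq p$. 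Adding the two bounds proves $\Index_{\eps_j}(\tilde u_{\eps_j};B)\leq 2p$.

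The main technical obstacle is matching test function classes: the definition of $\Index_{\eps}$ in Section \ref{sec:prelim} uses $C^\infty_c$ functions, so one must verify that the restrictions/zero-extensions we produce lie in the admissible class realizing $\Index_{\eps_j}(u_{\eps_j};P)$ under the Neumann convention. This is essentially a density/approximation argument, using that the quadratic form is continuous in $H^1$ and that smooth Neumann-compatible functions with support away from the vertices are dense in the relevant space; it is routine but must be written down carefully because a single test function for $\tilde u_{\eps_j}$ on $B$ splits into two contributions (even and odd) on $P$, and we must preserve linear independence across the decomposition. Apart from this bookkeeping the argument is a clean symmetry splitting, and the even reflection was set up precisely so that the Neumann index on $P$ controls both halves.
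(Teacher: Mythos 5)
Your argument is correct and is in substance the same as the paper's: both decompose test functions on $B$ by parity under $(x,y)\mapsto(x,-y)$, both identify the even and odd sectors with, respectively, the Neumann and Dirichlet index problems on $B^+$, and both bound each by $\Index_{\eps_j}(u_{\eps_j};P)\leq p$. What differs is the linear-algebraic packaging. The paper exhibits the negative eigenspaces explicitly (letting $W$ be the span of odd reflections of Dirichlet eigenfunctions and even reflections of Neumann eigenfunctions) and shows any $\varphi$ orthogonal to $W$ satisfies $\delta^2E_{\eps_j}[\varphi]\geq 0$. You instead use the vanishing of the cross term to split the index as $k_e+k_o$ and then transport negative subspaces directly by restriction/extension. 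Your version is a bit cleaner, as it avoids invoking the eigenfunction basis and the variational characterization of eigenvalues; the paper's version is perhaps more standard in the spectral-geometry literature.

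Two small remarks on the writeup. First, the worry you flag about matching test function classes is less delicate than you suggest: since $\varphi$ is compactly supported in the open disk $B$, its restriction to $B^+$ vanishes identically near the curved part of $\partial B^+$, so the zero extension to $P$ is actually $C^\infty$, not merely $H^1$, and no density argument is needed. Second, the phrase ``extends by zero across the flat edge'' for the odd case is misleading, since $\partial_0 B^+$ sits on $\partial P$ and there is nothing to extend into across it; the extension is across the curved boundary of $B^+$, and the point about the odd function vanishing on $\partial_0 B^+$ is simply irrelevant for admissibility, because the paper's definition of $\Index_\eps(u;U)$ imposes no boundary condition on test functions (the ``Neumann-compatibility'' you check for the even part is likewise not required).
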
 
    \begin{proof}
        This follows from standard resutls (cf.\ \cite[(2.5)]{ABCS}). For completeness we give the proof below. 

        Let $\mathring B^+ : = B^+\setminus\partial_0B^+$ be the interior of the half disk. Since $C^\infty_c(\mathring B^+) \subset C^\infty_c(B^+)$ we have that 
        \[
        I_D : = \Index_{\eps_j}(u_{\eps_j};\mathring B_+) \leq \Index_{\eps_j}(u_{\eps_j};B_+) : = I_N \leq p. 
        \]
        By the variational characterization of eigenvalues of Schrodinger operators, we have $I_D$ (resp.\ $I_N$) is equal to the number of negative eigenvalues for the operator $\eps_j^2 \Delta - W''(u_\eps)$ with Dirichlet (resp.\ Neumann) boundary conditions on $\partial_0 B^+$ and Dirichlet boundary conditions on $\partial B^+ \setminus \partial B_0^+$. Let $W \subset C^\infty(B)$ denote the span of all of the odd reflections of eigenfunctions corresponding to $I_D$ and all even reflection of eigenfunctions corresponding to $I_N$. 
        
        Suppose that $V\subset C^\infty(B)$ has $\dim V>2p$. We can find $\varphi \in V$ that's $L^2$-orthogonal to $W$. Write $\varphi = \varphi_E+\varphi_O$ for the even and odd parts (with respect to $y\mapsto -y$). Note that even functions are $L^2$-orthogonal to odd functions. This gives that $\varphi_E$ is $L^2$-orthogonal to the even reflection of all of the eigenfunctions for $I_N$ and similarly for $\varphi_O$ and $I_D$. Thus we find that
        \[
        \delta^2E_{\eps_j}[\varphi] = \delta^2E_{\eps_j}[\varphi_E] + \delta^2E_{\eps_j}[\varphi_O] \geq 0
        \]
        by the variational characterization of eigenvalues. This proves the claim. 
    \end{proof}

    Thus, the proof of \cite[Theorem 1.2]{ChodoshMantoulidis:2d} implies that the doubled energy density $\tilde\mu_{\eps_j}$ (subsequentially) limits to $\sum_{j=1}^J \cH^1\lfloor \tilde \eta_j$ where $\tilde \eta_1,\dots,\tilde \eta_J$ are line segments with endpoints in $\partial B$. The set of line segments (with repetition allowed) must be invariant under reflection $y\mapsto -y$ since $\tilde \mu_{\eps_j}$ is. 

    \begin{claim}
        The boundary segment $\partial_0B^+$ occurs in  the list $\tilde \eta_1,\dots,\tilde \eta_J$ with even multiplicity. 
    \end{claim}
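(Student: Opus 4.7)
The plan is to exploit the symmetry $\tilde u_{\eps_j}(x,-y)=\tilde u_{\eps_j}(x,y)$, which forces $\partial_y \tilde u_{\eps_j}(x,0)\equiv 0$ on $\{y=0\}\cap B$, in order to show that the transition layers of $\tilde u_{\eps_j}$ accumulating on $\partial_0 B^+$ come in pairs reflected across $\{y=0\}$, and hence that the multiplicity is even.

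First I would pick a generic interior point $p=(x_0,0)$ of $\partial_0 B^+$ and a small $r>0$ with $\supp(\tilde\mu)\cap B_r(p) = \partial_0 B^+\cap B_r(p)$, and denote by $m$ the multiplicity with which this segment appears in the list $\tilde\eta_1,\dots,\tilde\eta_J$. Since $\tilde u_{\eps_j}$ is smooth on the interior disk $B_r(p)\subset B$ with $\Index_{\eps_j}(\tilde u_{\eps_j};B_r(p))\leq 2p$ by the previous claim, I would then invoke the Wang--Wei fine structure for two-dimensional Allen--Cahn critical points with bounded Morse index (\cite{WangWei,WangWei2}, applied as in \cite[\S 5]{ChodoshMantoulidis:2d}). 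Along the vertical transversal $\{x_0\}\times(-r,r)$, this identifies, for $j$ large, exactly $m$ ordered heteroclinic layers of $\tilde u_{\eps_j}(x_0,\cdot)$ centered at points $y_1^{(j)}<\dots<y_m^{(j)}$, all converging to $0$ as $j\to\infty$.

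By the evenness of $\tilde u_{\eps_j}$ in $y$, the multiset of layer centers $\{y_1^{(j)},\dots,y_m^{(j)}\}$ is invariant under $y\mapsto -y$. A center at $y=0$ is excluded: a heteroclinic transition centered there would force $|\partial_y \tilde u_{\eps_j}(x_0,0)|$ to be of order $\eps_j^{-1}$, directly contradicting $\partial_y \tilde u_{\eps_j}(x,0)\equiv 0$. Hence the centers pair up as $\{y_i^{(j)},-y_i^{(j)}\}$ with $y_i^{(j)}>0$, so $m$ is even.

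The main technical obstacle is invoking the Wang--Wei layer decomposition cleanly in this reflected setting, but this reduces to a routine application once we know that $\tilde u_{\eps_j}$ is a smooth Allen--Cahn critical point on the open disk $B_r(p)$ with Morse index controlled by the previous claim. An essentially equivalent alternative would be to argue at the level of limiting phases: the Caccioppoli set $E = \{\lim\tilde u_{\eps_j}=+1\}$ is symmetric under $y\mapsto -y$, so the two limiting phases on either side of $\partial_0 B^+$ must agree, which is incompatible with $\partial_0 B^+$ appearing with odd multiplicity in the integer-multiplicity limit of CM-2d.
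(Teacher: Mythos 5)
Your main argument is correct but takes a genuinely heavier route than the paper. You invoke the Wang--Wei fine structure (a layer decomposition with ordered heteroclinic centers along a transversal, requiring the bounded Morse index from the previous claim), deduce that the multiset of layer heights is invariant under $y\mapsto -y$ by evenness of $\tilde u_{\eps_j}$, and rule out a layer at $y=0$ because a heteroclinic centered there would make $|\partial_y\tilde u_{\eps_j}(x_0,0)|\sim\eps_j^{-1}$, contradicting the reflection symmetry. All of this is sound, and the exclusion of a central layer can equivalently be seen from the fact that the heteroclinic profile $\HH$ is odd whereas $\tilde u_{\eps_j}(x_0,\cdot)$ is even. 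The paper instead uses the much softer Hutchinson--Tonegawa parity statement: by \cite[Theorem 1(2)]{HT}, $\tilde u_{\eps_j}\to\pm 1$ locally uniformly on each component of $B_r(q)\setminus\partial_0 B^+$; by evenness both components carry the same limiting constant; and then \cite[Theorem 1(4)]{HT} says that a matching phase on both sides of the limit interface forces the varifold density to be even. This avoids the fine layer analysis entirely --- your second, ``essentially equivalent alternative'' paragraph is in fact the paper's actual argument. Both routes are valid; the HT route is preferable here because it requires no Morse-index input for this particular claim and no appeal to the Toda-system/layer machinery, whereas your primary route does require the index bound and the full Wang--Wei decomposition to be in force.
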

    \begin{proof}
        Choose $q \in \partial_0B^+$ and $r>0$ so that $B_r(q) \subset B$ and if $\tilde\eta_j$ is in the list above and $\tilde\eta_j \neq \partial_0 B^+$ then $B_r(q) \cap \tilde\eta_j = \emptyset$. The restricted functions $\tilde u_{\eps_j}|_{B_r(q)}$ have corresponding energy measures $\tilde\mu_{\eps_j}\lfloor B_r(q)$ converging to $k \cH^1 \lfloor (\partial_0 B^+ \cap B_r(q))$ for some $k \in \NN$ (the multiplicity that $\partial_0B^+$ occurs in the list). By \cite[Theorem 1(2)]{HT} and the fact that $\tilde u$ is an even reflection across $\partial_0B^+$ we have that either $u_{\eps_j}|_{B_r(q)} to 1$ or $-1$ locally uniformly on $B_r(q)\setminus \partial_0B^+$. Thus, by \cite[Theorem 1(4)]{HT}, the multiplicity $k$ is even. 
    \end{proof}
    Observe that for $x \in B^+\setminus \partial_0B^+$ we have that $\mu(B_r(x)) = \tilde\mu(B_r(x))$ for $r<d(x,\partial B_+)$ and for $x \in \partial_0 B^+$ and $r<1$ we have $\mu(B_r(x)\cap B^+) = \frac 12 \tilde \mu(B_r(x))$. Thus, if we list the line segments $\tilde \eta_1,\dots,\tilde\eta_K,\tilde\eta_{K+1},\dots,\tilde\eta_J$ where $\tilde\eta_j = \partial_0B^+$ for $j = K+1,\dots,J$ (say $2M$ times) we see that
    \[
    \mu = \sum_{j=1}^K \cH^1\lfloor( \tilde\eta_j \cap B^+) + M \cH^1\lfloor \partial_0B^+
    \]
    where each $\tilde\eta_j$, $j=1,\dots,K$ has a reflected pair in this list. This completes the proof.
    \end{proof}

We can finally prove Theorem \ref{theo:polygon-billiard} as follows. By Proposition \ref{prop:local-bounce}, for any $s>0$ we see that
\[
\mu\lfloor (P\setminus B_s(V)) = \sum_{j=1}^J \cH^1\lfloor \hat \eta_j
\]
for line segments $\hat \eta_j$ in $P\setminus B_s(V)$ with boundary in $\partial(P\setminus B_s(V))$ and so that if a boundary point of $\hat \eta_j$ is in $\partial P \setminus B_s(V)$ then there's some other $\hat \eta_{j'}$ with a common boundary point so that $\hat \eta_j$ and $\hat \eta_{j'}$ bounce off of $\partial P$ at the common boundary. 

Since billiards of finite length cannot bounce infinitely many times into a convex corner (cf.\ \cite[Lemma 2.3]{Lange:billiard}) we thus see that 
\[
\mu\lfloor (P\setminus V) = \sum_{j=1}^J \cH^1\lfloor \eta_j
\]
for line segments $\eta_j$ in $P\setminus V$ with boundary in $\partial P$ and so that if a boundary point of $\eta_j$ is in $\partial P \setminus V$ then there's some other $\eta_{j'}$ with a common boundary point so that $\eta_j$ and $\eta_{j'}$ bounce off of $\partial P$ at the common boundary. 

Lemma \ref{lemm:mass-bd-vertex} implies that $\mu(V) = 0$, and thus we have
\[
\mu  = \sum_{j=1}^J \cH^1\lfloor \eta_j.
\]
 for $\eta_j$ as above. Grouping the $\eta_j$ into billiard trajectories by following successive bounces (unless the endpoint is in $V$ or the interval meets $\partial P$ orthogonally in which case the billiard terminates), this proves Theorem \ref{theo:polygon-billiard}.

\section{Improving the billiard theorem for the equilateral triangle} \label{sec:triangle-billiards} 

In this section we show that billiards that arise in the $p$-widths of an equilateral triangle $T$ ``bounce'' off of vertices in a precise way. For example, this will imply that a single (or double) side of $T$ cannot arise as a billiard in $\omega_p(T)$ unless the full $\partial T$ does. 

For a vertex $v \in V$ write $\ell_v$ for the line that passes through $v$ and is parallel to the side of $T$ opposite of $v$. We say that a \emph{$T$-billiard trajectory} is a sequence of intervals $\eta_1,\dots,\eta_K$
\begin{enumerate}
\item[(1'')] For $k=1,\dots,K$, the ending point $p$ of $\ell_k$ lies in $\partial T$ and agrees with the starting point of $\ell_{k+1}$. If $p \in \partial T\setminus V$, the specular reflection condition holds at $p$ with respect to the edge that $p$ lies in. If $p \in V$ the specular reflection condition holds at $p$ with respect to $\ell_v$. 
\item[(2'')] One of the following holds:
\begin{enumerate}
    \item[(a'')] The condition (1') holds cyclically, i.e.\ for $\ell_K$ and $\ell_1$, or
    \item[(b'')] The first interval $\eta_1$ meets $\partial T\setminus V$ orthogonally at its starting point or else starts at $v\in V$ and meets $\ell_v$ orthogonally.  Similar consideration holds for the endpoint of $\eta_K$. 
\end{enumerate}
\end{enumerate}

Consider the tesselation of $\RR^2$ by repeated reflections across edges of $T$. Let $G$ denote the group generated by reflections preserving the tesselation and write $\cT$ for the image of $\partial T$ under the tesselation (an infinite set of straight lines). 

\begin{lemm}\label{lemm:triangle-billiard-reflect}
    Suppose that $\eta_1,\dots,\eta_K \subset T$ is a set of line segments with $\partial\eta_k \subset \partial T$. Suppose that the image of the segments under the reflection group $G(\cup_k \eta_k)\subset \RR^2$ is a set of straight lines (with constant multiplicity). Then $\eta_1,\dots,\eta_K$ is a union of $T$-billiards. 
\end{lemm}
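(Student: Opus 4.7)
The plan is to verify the $T$-billiard reflection conditions at each endpoint of each $\eta_k$ by ``folding'' each line of the image $G(\cup_k \eta_k)$ back into $T$. The intuition is that a $T$-billiard unfolds to a straight line in the tessellated plane; since the $\eta_k$'s are assumed to already be in unfolded form, we only have to invert the unfolding at each bounce and check that the reflection rule at each endpoint is the one prescribed by (1'').

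First I would treat an endpoint $p$ of some $\eta_k$ lying in $\partial T \setminus V$. Let $e$ be the edge of $T$ containing $p$ and $r_e \in G$ the reflection across $e$. By the $G$-equivariance of the image, the line $L$ through $\eta_k$ continues past $p$ into the adjacent tile $T' = r_e(T)$, and $L \cap T'$ must have the form $r_e(\eta_{k'})$ for some $\eta_{k'}$. Unfolding, $\eta_{k'}$'s direction at $p$ is the reflection of $\eta_k$'s direction across $e$ — exactly the specular reflection of condition (1''). In the degenerate case $\eta_k \perp e$ we get $\eta_{k'} = \eta_k$, which corresponds to the orthogonal-termination case of (b'').

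The vertex case is the crux. Fix an endpoint $v \in V$ of $\eta_k$, and observe the key fact that $\ell_v$ is precisely the third tessellation line through $v$ beyond the two edges of $T$ meeting at $v$; hence $r_{\ell_v} \in G$, and $r_{\ell_v}(T)$ is the triangular sector of the tessellation opposite to $T$ around $v$. If $\eta_k$ leaves $v$ in direction $\alpha \in [0, \pi/3]$, then the line $L$ exits $v$ into $r_{\ell_v}(T)$ in direction $\alpha + \pi$, forcing $L \cap r_{\ell_v}(T) = r_{\ell_v}(\eta_{k'})$ for some $\eta_{k'}$. Reflecting across the axis at angle $2\pi/3$ sends $\alpha + \pi$ to $\pi/3 - \alpha$, so $\eta_{k'}$ leaves $v$ in direction $\pi/3 - \alpha$, which is precisely the outgoing direction produced by specular reflection of the incoming velocity $\alpha + \pi$ across $\ell_v$. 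The degenerate case $\alpha = \pi/6$ (i.e.\ $\eta_k \perp \ell_v$) gives $\eta_{k'} = \eta_k$, matching the vertex termination case of (b'').

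Given these pointwise reflection rules, starting from any $\eta_k$ one chains it together with its successive partners at each endpoint. Since $K$ is finite, every such chain must eventually close up cyclically (giving (a'')) or reach two orthogonal termini (giving (b'')), so each chain is a $T$-billiard; iterating partitions $\{\eta_1, \ldots, \eta_K\}$ into $T$-billiards as required. The main technical care is respecting the constant multiplicity hypothesis: at an endpoint shared by several $\eta_k$'s, one must match each incoming segment to a unique outgoing partner of equal multiplicity, which is forced by comparing multiplicities of $L$ on the two sides of $\partial T$ in the tessellation.
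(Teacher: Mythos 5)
Your proposal is correct and takes essentially the same approach as the paper: fold the straight line back into $T$ across the edge (for $p\in\partial T\setminus V$) or across $\ell_v$ (for $p\in V$, using that $\ell_v$ is itself a tessellation line so $r_{\ell_v}\in G$), then chain the segments together into $T$-billiards. Your version simply carries out the vertex-case angle computation explicitly and flags the multiplicity bookkeeping, both of which the paper leaves implicit.
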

\begin{proof}
    Let $p$ be an endpoint of $\eta_1$. If $p \in \partial T\setminus V$ then by considering the reflection of $T$ across the edge containing $p$ we see that either $\eta_1$ meets $\partial T$ orthogonally at $p$ or else there's $\eta_2$ (up to relabeling) so that $\eta_1$ bounces off of $\partial T$ at $p$ to give $\eta_2$. If $p \in V$ then note that the reflection of $T$ across $\ell_v$ is in the tesselation so similar logic applies there. This completes the proof. 
\end{proof}

We now show that the $p$-widths of $T$ are attained by $T$-billiards:
\begin{prop}\label{prop:triangle-billiards}
    Let $T$ be an equilateral triangle. Then for $p=1,2,\dots$ we have
    \[
    \omega_p(P) = \sum_{j=1}^{N(p)} \length(\gamma_{p,j})
    \]
    for $T$-billiard trajectories $\gamma_{p,j}$. 
\end{prop}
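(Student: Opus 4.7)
The plan is to apply Lemma \ref{lemm:triangle-billiard-reflect} to the collection of segments $\eta_j$ produced in the proof of Theorem \ref{theo:polygon-billiard}. Recall that these arise as the support of the limit measure $\mu = \sum_j \cH^1 \lfloor \eta_j$ of the energy densities of Allen--Cahn critical points $u_\eps$ on $T$ with $\Index_\eps(u_\eps) \leq p$. To invoke Lemma \ref{lemm:triangle-billiard-reflect} we must show the $G$-orbit $G(\cup_j \eta_j) \subset \RR^2$ is a union of complete straight lines of constant multiplicity.

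The key observation is that $T$ tiles $\RR^2$ under $G$ (each interior angle of $T$ equals $\pi/3$, so $6$ copies of $T$ fit around each vertex), which allows us to extend $u_\eps$ by repeated even reflection across tessellation edges to a $G$-invariant function $\tilde u_\eps$ on $\RR^2$. Because $u_\eps$ satisfies Neumann conditions on the smooth part of $\partial T$, this extension is $C^\infty$ across every edge of the tessellation. At the vertices $\tilde u_\eps$ is a priori only $C^1$, but since it is bounded and continuous and solves the Allen--Cahn equation classically on the complement of the discrete set $G(V)$, a standard removable-singularity argument for semilinear elliptic equations on $\RR^2$ promotes $\tilde u_\eps$ to a $C^\infty$ solution on all of $\RR^2$. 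For any bounded open $U \subset \RR^2$ meeting at most $N(U) < \infty$ tiles of the tessellation, iterating the even/odd decomposition from the claim inside the proof of Proposition \ref{prop:local-bounce} across each reflection yields the index bound $\Index_\eps(\tilde u_\eps; U) \leq N(U) \cdot p$. Passing to a further subsequence, the energy densities converge to a $G$-invariant Radon measure $\tilde\mu$ on $\RR^2$ with $\tilde\mu \lfloor T = \mu$.

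The local structure theorem of \cite{ChodoshMantoulidis:2d} (as invoked in Proposition \ref{prop:local-bounce}) now applies at every point $x \in \RR^2$: there is a ball $B_r(x)$ on which $\tilde\mu \lfloor B_r(x)$ is a finite sum of $\cH^1$-measures on line segments with endpoints on $\partial B_r(x)$. Since $\RR^2$ has no ambient boundary at which segments may terminate, these local pieces join across overlapping balls into a locally finite decomposition $\tilde\mu = \sum_k m_k \cH^1 \lfloor L_k$, where each $L_k \subset \RR^2$ is a complete straight line and $m_k \in \NN$ is constant along $L_k$ (the only candidate locations for a jump are transverse or collinear crossings with other lines of the support, where the local structure theorem forces the contributions to split cleanly). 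Thus $G(\cup_j \eta_j) = \cup_k L_k$ is a union of complete straight lines of constant multiplicity, so Lemma \ref{lemm:triangle-billiard-reflect} identifies $\cup_j \eta_j$ as a union of $T$-billiards. Grouping the $\eta_j$ into maximal trajectories by following successive specular reflections at interior edge points and $\ell_v$-reflections at vertices (terminating at orthogonal hits as at the end of the proof of Theorem \ref{theo:polygon-billiard}) produces the required $T$-billiard trajectories $\gamma_{p,j}$.

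The main obstacle is the constant-multiplicity step: ruling out jumps of $m_k$ along a single line $L_k$ at crossings with other lines in the support of $\tilde\mu$. This requires a careful bookkeeping at each type of crossing (transverse, tangential, or collinear overlap), but each case reduces to applying the local structure theorem of \cite{ChodoshMantoulidis:2d} in a small ball and matching the density contributions of the crossing segments. A secondary but standard concern is the smoothness of $\tilde u_\eps$ at vertices of the tessellation, which is handled by removable singularities for bounded continuous weak solutions of the Allen--Cahn equation on $\RR^2$ with singular set of codimension two.
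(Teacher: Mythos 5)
Your proposal follows essentially the same route as the paper: reflect $u_\eps$ across the edges of $T$ to a $G$-invariant Allen--Cahn solution $\tilde u_\eps$ on $\RR^2$, verify that the reflected function has uniformly bounded energy and index on compact sets, let the energy measures converge to a sum of $\cH^1$-measures on complete lines via the structure theorem of \cite{ChodoshMantoulidis:2d}, and then invoke Lemma \ref{lemm:triangle-billiard-reflect}. The paper's own proof says all this in about four lines (``As in the proof of Theorem \ref{theo:polygon-billiard}\dots Arguing as in Theorem \ref{theo:polygon-billiard}, this implies the claim''), whereas you spell out the intermediate steps. Two small comments on those steps. First, the iterated even/odd decomposition does not by itself give the linear bound $\Index_\eps(\tilde u_\eps;U)\le N(U)\,p$: a single reflection doubles the bound, so naively iterating gives an exponential factor. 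What actually matters is only that the bound on any fixed compact $U$ is finite and uniform in $\eps$, which the iterated argument does supply; if you want the cleaner linear bound, it is easier to pass to the torus $\RR^2/\Gamma$ (where $\tilde u_\eps$ is periodic, $\Gamma$ the translation lattice of $G$) and decompose $L^2(\TT^2)$ into isotypic components of the finite group $G/\Gamma$, each of which restricts to a boundary-value problem on a single tile with index $\le p$. Second, the ``constant multiplicity along a line'' bookkeeping you flag as the main obstacle is in fact part of the structure already delivered by the proof of \cite[Theorem 1.2]{ChodoshMantoulidis:2d}: their local analysis around crossing/tangency points already forces the multiplicity of each geodesic/line to be locally constant, so there is nothing extra to verify beyond patching the local pictures, which you correctly note. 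Your discussion of removability of the singularity at $G(V)$ (using Lemma \ref{lemm:grad-vanishes-corner} and standard elliptic regularity) is a correct elaboration of a point the paper leaves implicit.
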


\begin{rema}
    One may extend this theorem to a more general class of polygons. The crucial property is that the polygon admits an edge tesselation with an even number of faces meeting any vertex. This guarantees that if a sequence of reflections brings $P$ to $P$ then this reflection will be the identity map, rather than a self-reflection (for example, the regular hexagon does not have this property). The theorem also clearly extends to such polygons in the hyperbolic plane.
\end{rema}

\begin{proof}[Proof of Proposition \ref{prop:triangle-billiards}]
For $p \in \{1,2,\dots\}$ and $0 < \eps \ll 1$, let $u_\eps$ be the solution to the $\omega_{p,\eps}(T)$ min-max problem, so in particular
\[
\begin{cases}
    \eps^2 \Delta u_\eps = W''(u_\eps) & \textrm{in the interior of $T$}\\
    \partial_\eta u = 0 & \textrm{on $\partial T \setminus V$}
\end{cases}
\]
as well as $E_\eps[u]=\omega_{p,\eps}(K,\partial K)$ and $\Index_\eps(u_\eps) \leq p$. Let $\tilde u_\eps$ denote the function\footnote{Note that if we tried to repeat this proof for a hexagon, the even reflection will not be well-defined on $\RR^2$.} on $\RR^2$ generated by (even) reflection across the edges of $T$. As in the proof of Theorem \ref{theo:polygon-billiard} we see that $\tilde u_\eps$ has uniformly bounded energy and index on compact subsets of $\RR^2$ and thus a subsequential limiting energy measure $\mu$ must satisfy $\tilde \mu = \sum_{\ell \in L} \cH^1\lfloor \ell$ for straight lines $\ell$ by \cite[Theorem 1.2]{ChodoshMantoulidis:2d}. The measure $\tilde \mu$ is invariant under the reflections. Arguing as in Theorem \ref{theo:polygon-billiard}, this implies the claim. 
\end{proof}

\begin{lemm}\label{lemm:len-T-Bil}
    Suppose that $\gamma$ is a $T$-billiard. Then 
    \[
    \length(\gamma) \in \{\tfrac32\sqrt{a^2+ab+b^2} : a,b\in\ZZ\}.
    \]
\end{lemm}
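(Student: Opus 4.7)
The plan is to unfold $\gamma$ into a straight line in $\RR^2$ by successively reflecting $T$ (together with the remaining trajectory) across each bounce site: across the edge for an interior-edge bounce, and across $\ell_v$ for a vertex bounce at $v\in V$. These reflections all lie in $G$ because $\ell_v$ is itself a line in the tessellation $\cT$---it contains the edge of $\cT$ joining $v$ to its neighbor in the direction parallel to the opposite side of $T$. The resulting picture is a single straight line of length $\length(\gamma)$, whose two endpoints are related by a composite element $g \in G$.

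Next I would identify the translation subgroup $T(G) \subset G$. Composing the reflections across two consecutive parallel lines of $\cT$---which are spaced at distance $\tfrac{3}{2}$ (the altitude of $T$) in each of the three edge directions---gives a translation of length $3$. Since $G$ contains the $\pm 2\pi/3$-rotations (but none by $\pm\pi/3$) at the vertices of $\cT$, $T(G)$ is $2\pi/3$-rotation invariant, and combined with $-I$-symmetry this makes $T(G)$ hexagonal. Explicitly, $T(G)$ is generated by two length-$3$ vectors $u_1, u_2$ at angle $\pi/3$, so
\[
|a u_1 + b u_2| = 3\sqrt{a^2+ab+b^2}, \qquad a, b \in \ZZ.
\]
Note that $T(G)$ is a proper (index-$3$) sublattice of the vertex lattice $\Lambda_0$ of $\cT$.

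Finally I split on the type of $\gamma$. In the orthogonal-orthogonal case, the unfolded line is perpendicular to two parallel lines of $\cT$, so $\length(\gamma) = \tfrac{3}{2}k = \tfrac{3}{2}\sqrt{k^2}$ for some $k \in \ZZ_{\ge 0}$. In the cyclic case, pick $p\in\gamma$ and unfold; closing up with the same direction forces the linear part $g_*$ to fix the straight-line direction $d$. The point group of $G$ (of order $6$) consists of the identity, two rotations by $\pm 2\pi/3$, and three reflections across the edge directions of $T$, so either $g_* = I$ (whence $g \in T(G)$ and $\length(\gamma) = 3\sqrt{a^2+ab+b^2} = \tfrac{3}{2}\sqrt{(2a)^2+(2a)(2b)+(2b)^2}$) or $g_*$ is a reflection with axis $\RR d$ (so $g$ is a glide reflection along $d$). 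In the glide case, writing $g(x) = Rx+t$ with $t = t_\parallel + t_\perp$ gives $g^2(x) = x + 2t_\parallel \in T(G)$; since $g(p)-p$ lies along $d$, the point $p$ must lie on the glide axis, hence
\[
\length(\gamma) = |t_\parallel| = \tfrac{1}{2}|g^2(p)-p| = \tfrac{3}{2}\sqrt{a^2+ab+b^2}.
\]
The main subtlety is the correct identification of $T(G)$ as the coarser hexagonal lattice (shortest vectors of length $3$, not $\sqrt{3}$), which is what produces the exact factor $\tfrac{3}{2}$ in the conclusion.
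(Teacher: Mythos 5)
Your proof is correct, and its core is the same as the paper's: unfold the billiard across the triangle tessellation to get a straight segment and read off the allowable lengths from the translation lattice of the reflection group $G$. The difference is in how the cyclic case is handled. The paper forms an auxiliary segment $\hat\gamma$ by ``doubling'' $\tilde\gamma$ when the terminal line $L$ is not horizontal, so that the endpoint of $\hat\gamma$ lies in $G(A)$, and then divides by two. You instead classify the composite unfolding isometry $g$ via the point group of $G$ (order $6$, type $D_3$, no $2$-fold rotations): since $g_*$ must fix the trajectory direction, $g$ is either a translation in $T(G)$ or a glide reflection whose square lies in $T(G)$; the translation case gives $3\sqrt{a^2+ab+b^2}$ and the glide case gives exactly half a lattice vector, $\tfrac{3}{2}\sqrt{a^2+ab+b^2}$, with the orthogonal-orthogonal case falling out as $\tfrac{3}{2}k$. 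This crystallographic route makes the dichotomy ``translation vs.\ half-translation'' completely transparent and, importantly, makes explicit the role of the cyclic condition ($g_*$ fixes the direction), which the paper invokes more informally when asserting that $\hat\gamma$ is a straight segment. Your observation that $T(G)$ is the index-$3$ hexagonal sublattice of the vertex lattice, with shortest vectors of length $3$ rather than $\sqrt{3}$, is exactly the computation that produces the factor $\tfrac{3}{2}$, and is the same lattice the paper writes as $G(A)=\langle (0,3), (\tfrac{3\sqrt3}{2},\tfrac32)\rangle$. One small point to make explicit: the composite $g$ whose linear part fixes the direction is the product of \emph{all} $K$ bounce reflections (including the final one at the starting/ending point); the composite of only the first $K-1$ reflections sends $p_0$ to the same endpoint but does not fix the direction. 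This does not affect the conclusion, since $p_0$ lies on the starting edge and the final reflection fixes $p_0$.
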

\begin{proof}
    It suffices to consider $T$ with vertices $A=(0,0),B=(\sqrt{3},0),C=(\frac{\sqrt{3}}{2},\frac 32)$. We assume that $\eta_1$ starts on $\overline{AB} \setminus B$. Let $\tilde\gamma \subset \RR^2$ denote the \emph{unfolding} of $\gamma$ to $\RR^2$. This means that we start with $\eta_1$ and then reflect $\eta_2$ across the edge of $\partial T$ (or else $\ell_v$) to straighten out the specular reflection. This yields a line segment $\tilde\gamma \subset \RR^2$ with length $=\sum_{k=1}^K |\eta_j|$. 
    
    Assume that case (2.b'') holds for the starting point. Then $\tilde\gamma$ will be a vertical line (since $\ell_A$ is horizontal) and thus we see that the endpoint of $\tilde\gamma$ must lie on a horizontal line in the tesselation by $T$. This gives that $|\tilde\gamma| = m \frac 32$. 

    We now assume that (2,a'') holds. Then the endpoint of $\tilde\gamma$ lies in some reflection of $\overline{AB}$ which is contained in some line $L \in \cT$. If $L$ is not horizontal, we consider $\hat \gamma$ formed by the union of $\tilde\gamma$ with its reflection across $L$. This will again be a straight line segment (by the cyclic condition). If $L$ is horizontal we set $\hat\gamma=\tilde\gamma$. Since the endpoint of $\hat \gamma$ is a reflection of the starting point, we can translate $\hat \gamma$ so its starting point is $A$ and its endpoint is in $G(A)$. Note that $G(A)$ is a lattice in $\RR^2$ generated by $(0,3)$ and $(\frac{3\sqrt{3}}{2},\frac32)$. Thus the length of $\hat\gamma$ is of the form
    \[
    \left\| a (0,3) + b (\tfrac{3\sqrt{3}}{2},\tfrac32) \right\| = 3 \sqrt{a^2+ab+b^2}.
    \]
    As such, the length of $\tilde\gamma$ lies in the set
    \[
    \{\tfrac 32\sqrt{a^2+ab+b^2} : a,b\in \ZZ\}. 
    \]
    Note that $m\frac 32$ is in this set (take $a=m,b=0$) so this covers the (2.b'') case as well. This completes the proof. 
\end{proof}

\section{The first width of a convex polygon}\label{sec:w1} In this section we prove Theorem \ref{theo:poly-w1}. Let $P\subset \RR^2$ be a convex polygon. For $v \in S^1$ we define
\[
\Phi : [-L,L] \to \cZ_1(P,\partial P), \qquad t\mapsto \partial\{p \in P : p \cdot v \leq t\}.
\]
Fix $L$ sufficiently large so that
\[
\{p \in P : p \cdot v \leq -L\} = \emptyset, \qquad \{p \in P : p \cdot v \leq L\} = P.
\]
It is clear that $\Phi$ is a $1$-sweepout with
\[
\max_{t \in [-L,L]} \MM(\Phi(t)) \leq \max_{x\in K} x\cdot v - \min_{x\in K} x\cdot v 
\]
Taking the infimum over all such $v$ we get $\omega_1(P) \leq W(P)$. 

For the converse inequality, we consider any smoothing $P_s \subset P$ so that $P_s$ is a strictly convex region with smooth boundary and $P_s$ limits to $P$ as $s\to 0$ (e.g.\ in the Hausdorff sense). By \cite{DonatoMontezuma}, $\omega_1(P_s)$ is equal to the length of some free boundary line segment $\eta$, i.e. with $\partial \eta \subset \partial P_s$ and $\eta$ meets $\partial P_s$ orthogonally. Since $P_s$ is convex, if we let $v$ denote a unit vector in the direction of $\eta$ we thus see that
\[
\omega_1(P_s) \geq \max_{x\in P_s } x\cdot v - \min_{x\in P_s} x\cdot v \geq W(P_s) \to W(P)
\]
as $s\to 0$. Since $P_s\subset P$ we have $\omega_1(P_s) \leq \omega_1(P)$. Putting this together we have proven Theorem \ref{theo:poly-w1}.

\section{Low $p$-widths of the equilateral triangle}\label{sec:triangle}

Let $T$ be the equilateral triangle inscribed in the unit circle. Note that $W(T) = \frac 32$. Thus Theorem \ref{theo:poly-w1} gives $\omega_1(T) = \frac 32$. 

\subsection{$w_2(T)$} \label{subsec:2-width-T} In this section we construct a $2$-sweepout $\Phi :\RR P^2\to \cZ_1(T,\partial T)$ with $\max_{x\in \RR P^2} \MM(\Phi(x)) = \frac 32$. Since $\frac32=\omega_1(T)\leq \omega_2(T)$ this will prove that $\omega_2(T) = \frac 32$. 

We first define 
\[
\phi:\partial T\times \partial T\to \cZ_{1}(T,\partial T)
\]
as follows. If $p_i \in \partial T\setminus V$ we write $l_i$ for the line through $p_i$ perpendicular to $\partial T$ at $p_i$. Then:
\begin{itemize}
    \item If $p_1,p_2$ are on the same edge of $T$, set $\phi(p_1,p_2)=\emptyset$. This includes the case where both points are vertices of $\partial T$.
    \item If $p_1 \in V$ and $p_2$ is not, set $\phi(p_1,p_2)=l_2\cap T$, and vice versa.
    \item Otherwise, let $l_1,l_2$ meet at $p$, and set $\phi(p_1,p_2) = (\overline{p_1p}\cup \overline{p_2p}) \cap T$. 
\end{itemize} 
See Figure \ref{fig:2-width-T}. 

\begin{figure}[h]
    \centering
    \includegraphics[width=9cm]{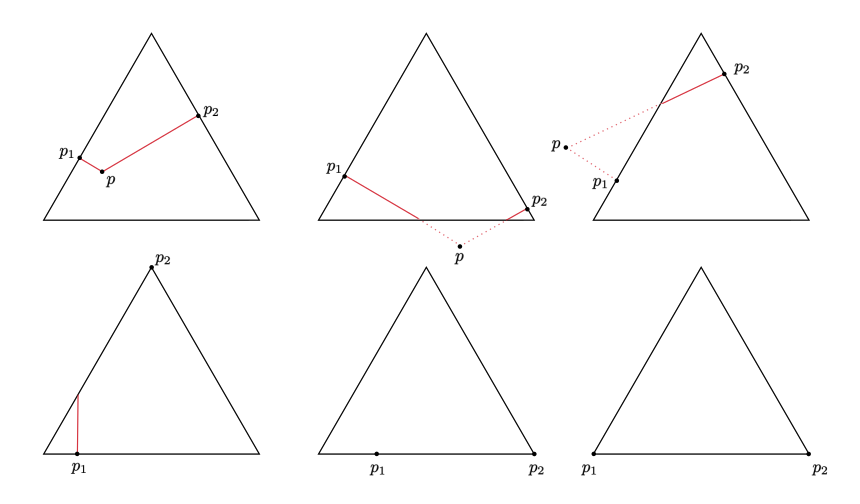}\label{figure:triangle-phi-map-construction}
    \caption{The definition of the map $\phi : \partial T\times \partial T \to \cZ_1(T,\partial T)$ used to construct an optimal $2$-sweepout of the equilateral triangle $T$.}
    \label{fig:2-width-T}
\end{figure}

\begin{lemm}\label{lemm:Phi-2-swp-upper-bd}
    $\max_{(p_1,p_2)\in\partial T\times \partial T} \MM(\phi(p_1,p_2)) = \frac 32$
\end{lemm}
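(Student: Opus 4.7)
My strategy is a case analysis on the definition of $\phi$, using Viviani's theorem for $T$ as the unifying tool: for any point $q\in\RR^2$, the signed perpendicular distances from $q$ to the three lines containing the edges $e_1,e_2,e_3$ of $T$ sum to the altitude $h=3/2$, with positive sign on the interior side of each edge and negative on the exterior side.

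If $p_1,p_2$ lie on a common edge then $\phi(p_1,p_2)=\emptyset$ has mass $0$. If exactly one is a vertex, say $p_1\in V$ and $p_2\in\partial T\setminus V$, then $\phi(p_1,p_2)=l_2\cap T$ is the full chord of $T$ perpendicular to the edge containing $p_2$; every such chord has length $\leq 3/2$, with equality iff $p_2$ is the midpoint of its edge (in which case $l_2\cap T$ is an altitude of $T$). This already exhibits a configuration attaining $\MM(\phi)=3/2$, so the supremum is realized; it remains only to check $\MM(\phi)\leq 3/2$ in the remaining case.

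The substantive case is $p_1\in e_i$, $p_2\in e_j$ with $i\neq j$ and neither a vertex, and $p:=l_1\cap l_2$. A useful observation is that the inward perpendicular $l_i$ meets the opposite edge $e_3$ at angle $30^\circ$, since adjacent edges of $T$ meet at $60^\circ$. I would split by the location of $p$:
\begin{itemize}
\item If $p\in T$, then $p_i$ is the foot of perpendicular from $p$ to $e_i$, so $|p_ip|=d(p,e_i)$ for $i=1,2$, and Viviani gives $\MM(\phi)=d(p,e_1)+d(p,e_2)=\tfrac{3}{2}-d(p,e_3)\leq\tfrac{3}{2}$.
\item If $p$ lies on the exterior side of $e_3$, each segment $\overline{p_ip}$ exits $T$ at $q_i:=l_i\cap e_3$ with $|q_ip|=2\,d(p,e_3)$ (from the $30^\circ$ angle). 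Signed Viviani gives $d(p,e_1)+d(p,e_2)-d(p,e_3)=\tfrac{3}{2}$, hence
\[
\MM(\phi)=\big(d(p,e_1)-2d(p,e_3)\big)+\big(d(p,e_2)-2d(p,e_3)\big)=\tfrac{3}{2}-3d(p,e_3)\leq\tfrac{3}{2}.
\]
\item If $p$ lies on the exterior side of $e_1$ (the $e_2$ case is symmetric), then $\overline{p_1p}$ exits $T$ immediately at $p_1$ and so $\overline{p_1p}\cap T=\{p_1\}$ has zero length, while $\overline{p_2p}\cap T$ is contained in the full perpendicular chord of $T$ through $p_2$, which has length $\leq 3/2$ as in the vertex case above.
\end{itemize}

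The main obstacle is not the Viviani computation itself but the bookkeeping: correctly identifying which edge each perpendicular chord exits through in each regime, and properly tracking signs in Viviani once $p$ leaves $T$. Once that case structure is in place, the uniform upper bound $\MM(\phi)\leq\tfrac{3}{2}$ drops out of Viviani in each subcase, which together with the altitude configuration from paragraph two yields $\max_{(p_1,p_2)\in\partial T\times\partial T}\MM(\phi(p_1,p_2))=\tfrac{3}{2}$.
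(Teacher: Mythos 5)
Your proof is correct and follows the same case structure as the paper ($p\in T$; $p$ exterior of the third edge; $p$ exterior of an edge containing one of the $p_i$), but the computation in each nontrivial case is genuinely different. The paper draws the line $L$ through $p$ parallel to the third edge, lets $t_1,t_2$ be its intersections with the two edges in play, and uses the right triangles $t_ip_ip$ (with a $60^\circ$ angle at $t_i$) to get $|\overline{p_1p}|+|\overline{p_2p}|=\sin(\tfrac\pi3)\,|\overline{t_1t_2}|\le\tfrac32$; the exterior-of-$e_3$ case uses the analogous similar triangles with the vertices $v_1,v_2$ replacing $t_1,t_2$. You instead express everything in terms of the perpendicular distances $d(p,e_i)$ and invoke signed Viviani; the $30^\circ$ angle between $l_i$ and the opposite edge does the same work as the $\sin(\pi/3)$ factor, but your identity handles the interior and exterior-of-$e_3$ cases with essentially one bookkeeping step, which is a mild streamlining. (The fact that Viviani extends with signs is standard: each $d(p,e_i)$ is affine in $p$ and the three inward unit normals sum to zero, so the sum is constant.) One point you use implicitly that is worth spelling out is that $p=l_1\cap l_2$ can be exterior to at most one edge of $T$ (otherwise one of your cases would not apply cleanly, and in case 2 you need both segments to exit $T$ through $e_3$ rather than through an adjacent edge); this follows quickly from the perpendicularity constraints and is not a gap, but it is the one piece of geometric bookkeeping your write-up leaves to the reader.
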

\begin{proof}
    We can assume that $p_1,p_2$ do not lie on the same edge. If $p_1$ (or equivalently $p_2$) is a vertex then $\phi(p_1,p_2) = l_2\cap T$ has length $\leq \frac 32$ (with equality when $p_2$ is the midpoint of the opposite edge from $p_1$). Thus it suffices to assume that neither of $p_1,p_2$ is a vertex. 

    We first consider the case where $l_1\cap l_2 = p \in T$. Let $L$ be the line through $p$ that's parallel to the edge of $T$ not containing $p_1,p_2$. Let $t_1,t_2$ denote the points in $L\cap \partial T$ (with $t_i,p_i$ on the same edge for $i=1,2$). See Figure \ref{fig:upper-bd-phi-1}. 

    We have
    \[
    |\overline{p_1p}| + |\overline{p_2p}| = \sin \left(\tfrac{\pi}{3}\right) \left(|\overline{t_1p}| + |\overline{t_2p}|\right) = \tfrac{\sqrt{3}}{2}|\overline{t_1t_2}| \leq \tfrac 32. 
    \]
    The first equality follows by considering the $t_ip_ip$ triangles and the final inequality follows from the fact that $|\overline{t_1t_2}| \leq \sqrt{3}$ (the side length of $T$).  
    \begin{figure}[h]
        \centering
        \includegraphics[width=4cm]{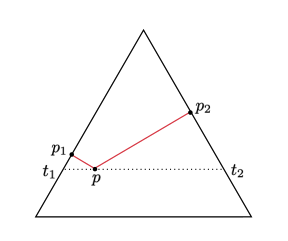}
        \caption{Deriving an upper bound for the length of $\phi$ when $l_1\cap l_2 = p \in T$.}
        \label{fig:upper-bd-phi-1}
    \end{figure}

    Next, we consider the case where $l_1\cap l_2 = p \not \in T$. Suppose that $p$ lies on the same side of $T$ as $p_1$ (the case where $p$ lies on the same side of $T$ as $p_2$ is similar). See Figure \ref{fig:upper-bd-phi-2}. In this case, $\phi(p_1,p_2) = l_2\cap T$, so we have $\MM(\phi(p_1,p_2))\leq \frac 32$. 
    \begin{figure}[h]
        \centering
        \includegraphics[width=4cm]{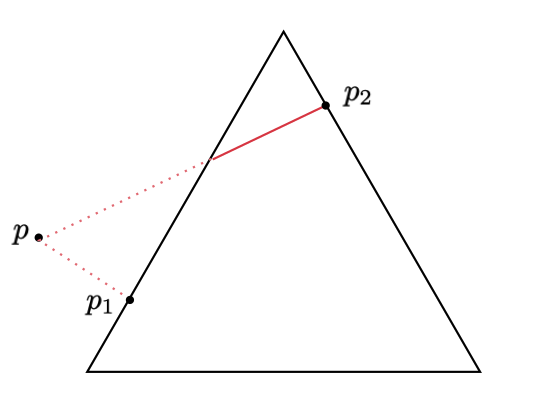}
        \caption{Deriving an upper bound for the length of $\phi$ when $l_1\cap l_2 = p$ lies outside of $T$ but on the same side of $T$ as $p_1$.}
        \label{fig:upper-bd-phi-2}
    \end{figure}

    Finally, we assume that $l_1\cap l_2 = p \not \in T$ is on the opposite side of the edge $\Gamma$ of $T$ not containing $p_1,p_2$. See Figure \ref{fig:upper-bd-phi-3}.
    \begin{figure}[h]
        \centering
        \includegraphics[width=4cm]{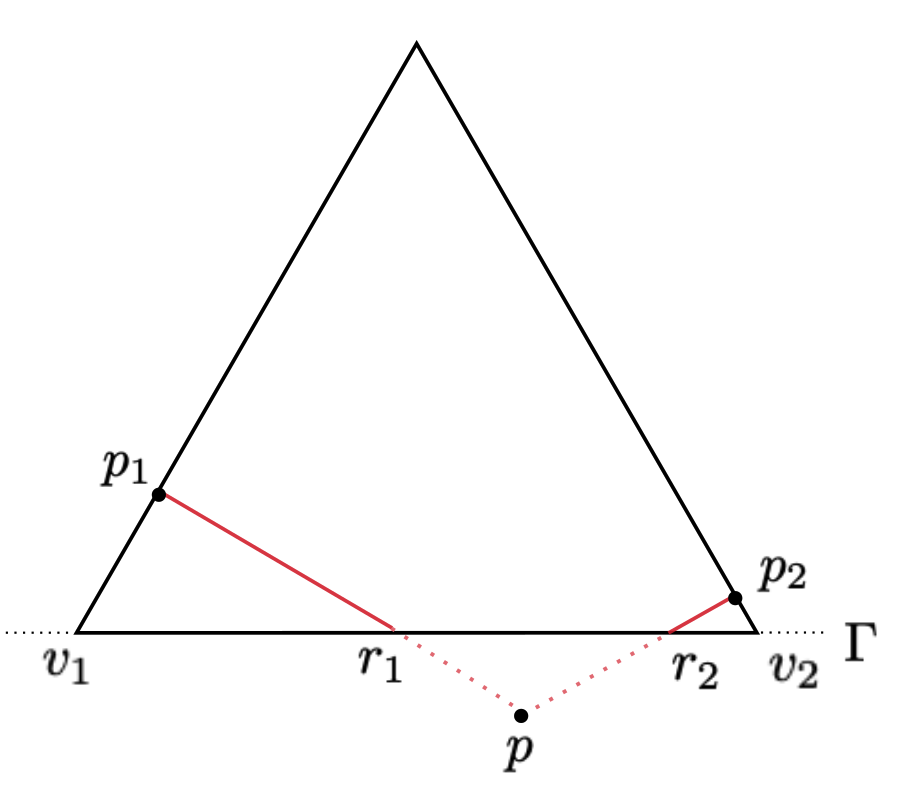}
        \caption{Deriving an upper bound for the length of $\phi$ when $l_1\cap l_2 = p$ lies outside of $T$ on the opposite side of $T$ as $p_1,p_2$.}
        \label{fig:upper-bd-phi-3}
    \end{figure}
    Let $r_i = l_i \cap \Gamma$ and $v_1,v_2$ denote the vertices of $T$ on $\Gamma$. We have
    \[
    |\overline{p_1r_1}| + |\overline{p_2r_2}| = \sin\left(\tfrac \pi 3\right) \left( |\overline{v_1r_1}| + |\overline{v_2r_2}|\right) \leq \tfrac{\sqrt{3}}{2}|\overline{v_1v_2}| = \tfrac32.
    \]
    This completes the proof.
\end{proof}

\begin{lemm}
    $\phi$ is continuous.
\end{lemm}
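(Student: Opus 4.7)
The plan is to stratify $\partial T\times \partial T$ according to which clause of the definition of $\phi$ applies, verify continuity on each open stratum by inspection, and then check continuity at the transitions between strata. The open strata are: pairs on the interiors of distinct edges (where the ``otherwise'' rule applies) and pairs on the interior of a common edge (where $\phi\equiv\emptyset$); the complement is the closed vertex locus. On the first stratum the perpendicular lines $l_1,l_2$ depend smoothly on $(p_1,p_2)$ and are never parallel (no two edges of $T$ are parallel), so $p=l_1\cap l_2$ varies continuously, and hence so does the flat chain $(\overline{p_1p}\cup\overline{p_2p})\cap T$, even as $p$ crosses $\partial T$. On the second stratum $\phi\equiv\emptyset$ is trivially continuous.

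The nontrivial part is verifying continuity as $(p_1,p_2)$ approaches the vertex locus. The key geometric fact underlying all the transition computations is that if $v\in V$ and $l_1^v$ denotes the perpendicular at $v$ to an edge incident to $v$, then $l_1^v\cap T=\{v\}$: the interior angle of $T$ at $v$ is $60^\circ$, strictly less than the $90^\circ$ angle between $l_1^v$ and the edge it is perpendicular to, so $l_1^v$ leaves $T$ immediately on both sides of $v$. I would first treat the case where $p_2$ lies in the interior of the edge opposite to $v$ and $p_1\to v$ along an edge incident to $v$, so that $\phi(v,p_2)=l_2\cap T$. In the limit $l_1\to l_1^v$, and the intersection $p^v=l_1^v\cap l_2$ lies outside $T$; the tangency above implies $\overline{vp^v}\cap T=\{v\}$ has $\cH^1$-measure zero, so the limit of $(\overline{p_1p}\cup\overline{p_2p})\cap T$ reduces to $\overline{p_2p^v}\cap T=l_2\cap T$, matching $\phi(v,p_2)$.

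The remaining transitions are analogous. If $p_2$ lies in the interior of an edge incident to $v$ and $p_1\to v$ along the other edge incident to $v$, the tangency applied to both perpendiculars shows that $\overline{vp^v}$ and $\overline{p_2p^v}$ meet $T$ in sets of $\cH^1$-measure zero, giving $\phi\to\emptyset$ to match the ``same edge'' rule $\phi(v,p_2)=\emptyset$. If both $p_1,p_2$ approach a common vertex $v$ from distinct edges, then both perpendiculars pass through $v$ in the limit, so $p\to v$ and both segments $\overline{p_1p},\overline{p_2p}$ shrink to $\{v\}$, again recovering $\phi(v,v)=\emptyset$.

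The main obstacle is converting these pointwise-set convergences into genuine flat convergence in $\cZ_1(T,\partial T)$. I would deduce this from the fact that $\supp\phi(p_1,p_2)$ varies continuously in Hausdorff distance along each of the above limits, while $\MM(\phi(p_1,p_2))\le \tfrac 32$ uniformly (by Lemma \ref{lemm:Phi-2-swp-upper-bd}). For a family of multiplicity-one line-segment chains with uniformly bounded mass and Hausdorff-converging supports, flat convergence follows by constructing an interpolating $2$-chain swept out by the linear homotopy between the segments and estimating its mass, which tends to zero.
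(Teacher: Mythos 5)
Your stratification and the geometric identification of which arms degenerate are in the right spirit, but the final reduction to flat convergence is where the gap lies. The criterion you invoke---that a family of multiplicity-one line-segment chains with uniformly bounded mass and Hausdorff-converging supports converges in flat norm---is false: take two parallel segments (with endpoints on $\partial T$) at perpendicular distance $1/n$, each of multiplicity one. Their supports Hausdorff-converge to a single segment with bounded mass, yet the mod-$2$ chain bounds a thin strip of area $O(1/n)$ and therefore converges in the flat metric of $\cZ_1(T,\partial T)$ to $0$, not to that segment. Hausdorff convergence of supports simply does not determine the flat limit. Compounding this, in your transitions the supports do not in fact Hausdorff-converge to $\supp\phi(p_1,p_2)$: the shrinking arm collapses to $\{v\}$, which generically is not in $\supp(l_2\cap T)$, so the Hausdorff limit acquires a spurious point. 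That point has $\cH^1$-measure zero and so is invisible to the flat norm, but it means the criterion you set up is neither correct in general nor even applicable as stated. To make the interpolation strategy rigorous you would need to split off the degenerating arm, show its mass alone tends to $0$, and interpolate only the surviving arm against $l_2\cap T$---at which point you have essentially reproduced the paper's argument by a longer route.

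The paper argues directly from the $L^1$ model of the flat metric recorded in Appendix \ref{app:caccioppoli-sets}: elements of $\cZ_1(T,\partial T)$ are boundaries of Caccioppoli subsets of $T$ modulo complementation, and $\cF(\partial\Omega,\partial\tilde\Omega)=\min\{\|\chi_\Omega-\chi_{\tilde\Omega}\|_{L^1},\|\chi_\Omega-\chi_{\tilde\Omega^c}\|_{L^1}\}$. Each $\phi(p_1,p_2)$ is at most two segments with endpoints on $\partial T$, so it cuts off a definite sub-region of $T$; one checks that the area of the symmetric difference between the nearby cut-off region and the limiting one tends to zero (a shrinking sliver near $v$ in the ``same edge'' case, plus a thin wedge along $l_2\cap T$ in the ``opposite edge'' case). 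This is shorter, is tailored to the flat norm the paper actually uses, and avoids the Hausdorff subtlety entirely. One further small point: in your second transition (both $p_i$ on edges incident to $v$) the assertion that $\overline{p_2p^v}\cap T$ has $\cH^1$-measure zero does not follow from ``tangency of both perpendiculars''---$l_2$ is not tangent to $T$ at $v$ and meets $T$ in a positive-length segment. What is actually true, and requires its own short check, is that $p^v$ lies on the non-$T$ side of the edge containing $p_2$, so $\overline{p_2p^v}$ exits $T$ immediately at $p_2$.
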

\begin{proof}
    It suffices to check continuity at $(p_1,p_2)$ where one of the $p_i$ (without loss of generality $p_1$) is a vertex. If $p_2=p_1$ or $p_2$ lies on one of the two edges containing $p_1$ then for $(p_1',p_2')$ close to $(p_1,p_2)$ it's easy to see that $\phi(p_1',p_2')$ bounds a region of small area and is thus close to $\phi(p_1,p_2)=\emptyset$ in the flat norm. If $p_2$ lies on the opposite edge then $\phi(p_1,p_2)=l_2\cap T$ and a similar argument applies. 
\end{proof}

Note that $\phi(p_2,p_1) = \phi(p_1,p_2)$ and thus $\phi$ descends to a continuous map $\Sym^2(\partial T) \to \cZ_1(T,\partial T)$. Let $W\subset \Sym^2(\partial T)$ be the set of $[(p_1,p_2)]$ so that $p_1,p_2$ lie on a common edge. Since $\phi|_{W}=\emptyset$ we actually have a map $\Sym^2(\partial T)/W \to \cZ_1(T,\partial T)$. 

We now define $\psi : \RR P^2 \to \Sym^2(\partial T)/W$ as follows. Suppose that $a x + by + c = 0$ intersects $\partial T$ in precisely two points $(p_1,p_2)$. Then we set $\psi([a:b:c]) = [(p_1,p_2)]$. Otherwise (i.e.\ the intersection of the line and $\partial T$ is a full edge, only a vertex, or empty) we set $\phi([a:b:c]) = [W]$. 
\begin{lemm}
    $\psi$ is continuous. 
\end{lemm}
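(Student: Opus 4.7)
The plan is to verify sequential continuity of $\psi$ at each point of $\RR P^2$. Let $[a_n:b_n:c_n] \to [a:b:c]$ in $\RR P^2$ and write $L_n$, $L$ for the corresponding lines. Since $T$ is a convex polygon, the intersection $L \cap \partial T$ falls into one of the following mutually exclusive types: (i) two transverse non-vertex points on distinct edges; (ii) a vertex $v$ together with a non-vertex point on the edge opposite $v$; (iii) a full edge of $T$; (iv) a single vertex, so $L$ supports $T$ there; or (v) empty. Types (i) and (ii) give $\psi(L) \neq [W]$, while (iii)--(v) give $\psi(L) = [W]$. Using that $\Sym^2(\partial T)/W$ is metrizable, it suffices to check that $\psi(L_n) \to \psi(L)$ in each case.

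In cases (i) and (ii), for $L_n$ sufficiently close to $L$ the intersection points $L_n \cap \partial T$ are given by solving the linear equation $a_n x + b_n y + c_n = 0$ restricted to the two affected edges, so they vary continuously in $[a_n:b_n:c_n]$. In case (ii) we additionally use that a line passing through a vertex $v$ and a non-vertex point lying on an edge adjacent to $v$ must contain two points of that edge, hence coincide with that edge, which would place us in case (iii) instead. Thus in cases (i)--(ii) the lifts $\psi(L_n) \in \Sym^2(\partial T)$ converge to $\psi(L)$, and a fortiori they converge in the quotient.

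For cases (iii)--(v), where $\psi(L) = [W]$, we must show that lifts of $\psi(L_n)$ eventually lie in every open neighborhood of $W \subset \Sym^2(\partial T)$. In case (iii), when $L$ contains an edge $\overline{v_1 v_2}$, nearby $L_n$ either coincide with $\overline{v_1v_2}$ (so $\psi(L_n) = [W]$ directly) or cross $\partial T$ in two points close to $v_1$ and $v_2$; since $(v_1,v_2) \in W$, these pairs tend into $W$. In case (iv), the direction of the supporting line $L$ lies in the interior of the arc of supporting directions at $v$ (otherwise $L$ would contain an adjacent edge, placing us in case (iii)), so nearby $L_n$ through $v$ also meet $T$ only at $v$, while nearby $L_n$ not through $v$ either miss $T$ or cut a chord whose two endpoints both tend to $v$; the pair $(v,v) \in W$ is approached in either subcase. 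Case (v) is immediate by openness of the set of lines disjoint from $T$.

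The main obstacle is the geometric control in case (iv): we must rule out that a small perturbation of a supporting line at $v$ acquires an intersection point with $\partial T$ that stays bounded away from $v$. This is handled by observing that $L_n \cap T \to L \cap T = \{v\}$ in Hausdorff distance (because the Hausdorff distance of the intersection of a line with a fixed compact set depends continuously on the line in $\RR P^2$ once the line approaches $T$), which forces any chord $L_n \cap T$ into a shrinking neighborhood of $v$, thereby confining both endpoints to a small neighborhood of $v$ as required.
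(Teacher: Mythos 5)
Your argument is correct and follows the same route as the paper: continuity is routine on the open set of lines meeting $\partial T$ in exactly two points or not at all, and is then checked by hand at lines supporting $T$ at a vertex and at lines containing an edge; your accumulation-point argument in case (iv) is just a more spelled-out version of the paper's assertion that nearby lines meet $T$ in points close to $v$. One small imprecision in case (ii): when $L$ passes through a vertex $v$ and a non-vertex point on the opposite edge, the intersection point of a nearby $L_n$ close to $v$ may lie on either of the two edges adjacent to $v$ depending on the direction of the perturbation, so three edges are involved rather than ``the two affected edges''; the conclusion still holds since both candidate intersection points tend to $v$, but the phrasing suggests a single linear system when in fact a case split is hidden there.
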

\begin{proof}
    The set $[a:b:c]\in \RR P^2$ so that $\{ax+by+c=0\}$ intersects $\partial T$ in precisely two points or not at all is open and in this set continuity is clear. Suppose that $[a:b:c] \in \RR P^2$ has $\{ax+by+c=0\} \cap \partial T = v \in V$ (so $\psi([a:b:c]) = [W]$, nearby points $[a':b':c']$ either map to lines that avoid $T$ or else intersect $T$ in two points very close to $v$. A similar consideration when $\{ax+by+c=0\} \cap \partial T$ is an edge of $T$ proves continuity. 
\end{proof}

\begin{lemm}\label{lemm:Phi-2-swp-T-is-swp}
    $\Psi := \psi\circ\phi : \RR P^2 \to \cZ_1(T,\partial T)$ is a $2$-sweepout with no concentration of mass.
\end{lemm}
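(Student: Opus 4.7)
The plan is to verify the two defining properties of a $2$-sweepout: (a) no concentration of mass, and (b) the loop criterion provided by Lemma \ref{lemm:check-sweepout}. For (a), observe from the construction of $\phi$ that every cycle $\Psi(x)$ is a union of at most two line segments in $T$ (namely $\emptyset$, a single segment $l_i\cap T$, or $\overline{p_1q}\cup\overline{p_2 q}$). Since each such segment intersects any ball $B_r(y)$ in a set of $\mathcal{H}^1$-measure at most $2r$, one has $\MM(\Psi(x)\cap B_r(y)) \leq 4r$ uniformly in $x\in \RR P^2$ and $y\in T$, which gives no concentration of mass.

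For (b), take $\lambda \in H^1(\RR P^2;\ZZ_2)=\ZZ_2$ the generator, so $\lambda^2\neq 0 \in H^2(\RR P^2;\ZZ_2)$. For any $\gamma : S^1 \to \RR P^2$, the condition $\lambda(\gamma)\neq 0$ is equivalent to $\gamma$ being non-contractible. By Lemma \ref{lemm:cycles-weak-homotopy-equiv} and the Hurewicz remark following Lemma \ref{lemm:check-sweepout}, a loop in $\cZ_1(T,\partial T)$ with no concentration of mass is a $1$-sweepout iff it is non-contractible (since $\pi_1(\cZ_1(T,\partial T))=\ZZ_2$). Because both $\pi_1$ groups are $\ZZ_2$, the required equivalence reduces to exhibiting a single non-contractible loop $\gamma_0$ in $\RR P^2$ whose image $\Psi\circ\gamma_0$ is non-contractible in $\cZ_1(T,\partial T)$; the converse (contractible $\gamma$ gives contractible $\Psi\circ\gamma$) is automatic from continuity of $\Psi$ (established in the preceding two lemmas).

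Take $\gamma_0(t)=[0:1:-t]$ for $t\in\RR$, compactified through the common limit $[0:0:1]$ at $t=\pm\infty$; this parameterizes the horizontal lines $\{y=t\}$. Its $S^2$-lift $(0,1,-t)/\sqrt{1+t^2}$ runs from $(0,0,1)$ to $(0,0,-1)$, so $\gamma_0$ is non-contractible. Compare $\Psi\circ\gamma_0$ with the standard horizontal sweepout $\Phi_{\mathrm{std}}(t)=\partial(\{(x,y)\in T : y<t\})$, which is a $1$-sweepout because its Caccioppoli lift $U(t)=\{y<t\}\cap T$ runs from $\emptyset$ to $T$, distinct elements of $\cC(T,\partial T)$ with trivial boundary. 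For each $t$, the cycles $\Psi(\gamma_0(t))$ and $\Phi_{\mathrm{std}}(t)$ share their boundary on $\partial T$, so their mod-$2$ sum is $\partial R(t)$ for a Caccioppoli region $R(t)\subset T$ (essentially the triangle with vertices $p_1(t), p_2(t), q(t)$, intersected with $T$, degenerating to $\emptyset$ at the endpoints and in the vertex/outside-$q$ cases). Scaling $R(t)$ homothetically toward its centroid produces $R_s(t)\subset R(t)$ with $R_0(t)=\emptyset$ and $R_1(t)=R(t)$, and then
\[
H_s(t) = \partial\bigl(U(t)\triangle R_s(t)\bigr)
\]
defines a continuous homotopy in $\cZ_1(T,\partial T)$ from $H_0=\Phi_{\mathrm{std}}$ to $H_1=\Psi\circ\gamma_0$. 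Since non-contractibility is preserved under continuous homotopy, $\Psi\circ\gamma_0$ is non-contractible.

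The main obstacle will be verifying continuity of $R(t)$ (and hence of $H_s$) across the degenerate configurations built into the definition of $\phi$: when $\{y=t\}$ passes through a vertex of $T$ (switching to the vertex case of $\phi$, in which $R(t)$ reduces to a smaller triangle) and when the perpendicular intersection $q(t)$ exits $T$ (so that $\phi$ becomes $l_2\cap T$ and $R(t)$ takes a different form). One must check that $R(t)$ varies continuously in the flat topology through these transitions and that the homothetic scaling $R_s(t)$ can be defined uniformly so that $(s,t)\mapsto H_s(t)$ remains continuous, including as the scaling degenerates when $R(t)\to\emptyset$ near the endpoints of the loop.
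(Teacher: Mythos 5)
Your no-concentration-of-mass argument and the reduction via Lemma \ref{lemm:check-sweepout} (reduce to a single generator of $\pi_1(\RR P^2)=\ZZ_2$ because $\Psi_*$ on $\pi_1$ is determined by its value on any one non-contractible loop) are both correct and match the paper's setup. The difference is in how you verify that the image of that single loop is a nontrivial $1$-sweepout, and there your argument has a real gap.

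You choose the loop $\gamma_0$ of horizontal lines and then try to homotope $\Psi\circ\gamma_0$ to the standard horizontal sweepout by shrinking a ``wedge'' region $R(t)$ homothetically. The obstacle you flag at the end is genuine and is not resolved: the identification of $R(t)$ as the triangle $p_1p_2q\cap T$ breaks down exactly at the transitions baked into $\phi$ (when $q$ crosses $\partial T$, when a point approaches a vertex), and your parenthetical claim that $R(t)$ ``degenerat[es] to $\emptyset$ \dots in the vertex/outside-$q$ cases'' is false --- in those cases $\Psi(\gamma_0(t))$ and $\Phi_{\mathrm{std}}(t)$ are still distinct cycles, so $R(t)\neq\emptyset$; it just has a different shape (e.g.\ a quadrilateral when $q$ is past the opposite edge, a half-plane slab when $\Psi$ is a single perpendicular). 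Without a uniform description of $R(t)$ one cannot define the centroid-scaling $R_s(t)$ continuously across these transitions, so the map $(s,t)\mapsto H_s(t)$ is not shown to be continuous, and the homotopy is not established. The proof is therefore incomplete as written.

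The paper avoids all of this with a cleverer choice of loop: it takes $\gamma$ to be the pencil of lines through a fixed vertex $v\in V$. For such a line meeting the opposite edge $E$ at an interior point $p_2$, one of the two boundary points is the vertex $v$, so by definition $\phi(v,p_2)=l_2\cap T$, the segment through $p_2$ perpendicular to $E$; for all remaining lines through $v$ (those meeting only the two edges adjacent to $v$, or only $v$ itself) $\Psi=\emptyset$. Thus $\Psi\circ\gamma$ is literally the sweepout of $T$ by parallel segments perpendicular to $E$, which is the standard $1$-sweepout --- no homotopy or region-bounding argument is needed. If you want to keep your approach with horizontal lines, you must explicitly lift $t\mapsto \Psi(\gamma_0(t))+\Phi_{\mathrm{std}}(t)$ to a continuous path $t\mapsto R(t)$ in $\cC(T,\partial T)$ (using the double-cover structure) with $R=\emptyset$ at both endpoints, and replace the ad hoc homothety with a deformation retraction of $R(t)$ to $\emptyset$ that is jointly continuous in $(s,t)$; alternatively, just switch to the vertex pencil.
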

\begin{proof}
    We have that $\Psi$ is continuous and has no concentration of mass (since $\Psi([a:b:c])$ consists of at most two line segments in $T$). Let $\bar \lambda$ generate $H^*(\cZ_1(T,\partial T);\ZZ_2)$. Since $H^*(\RR P^2;\ZZ_2) = \ZZ_2[\alpha]/(\alpha^3)$ it suffices to prove that $\Phi^*(\bar \lambda) \neq 0$ and in fact, by Lemma \ref{lemm:check-sweepout} it suffices to prove that for $\gamma \subset \RR P^2$ generating $\pi_1(\RR P^2)$, $\Phi \circ\gamma$ is a $1$-sweepout.

    Fix $v \in V$ and Let $\gamma\subset \RR P^2$ denote the loop of lines in $\RR^2$ through $v$. Write $E$ for the edge of $T$ across from $V$. If $\gamma(t)$ represents a line that intersects $E$ (in the interior) then $\Psi(t)$ will be the line segment in $T$ through $\gamma(t)\cap E$ (meeting $E$ perpendicularly). Thus, $\Phi \circ \gamma$ is the sweepout of $T$ by lines parallel to $E$ and is thus a $1$-sweepout. 
\end{proof}

Putting these lemmas together we have proven $\omega_2(T)\leq \frac32$ which implies $\omega_2(T) = \frac32$ as explained previously. 

\subsection{$w_3(T)$}

\begin{lemm}
    $\omega_3(T) \geq \frac{3\sqrt{3}}{2} $
\end{lemm}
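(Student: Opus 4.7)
The plan is to combine the Lusternik--Schnirelman inequality (Lemma \ref{lemm:LS}) with the billiard theorem (Proposition \ref{prop:triangle-billiards}) and the length classification from Lemma \ref{lemm:len-T-Bil}. The key observation is that the set of admissible $T$-billiard lengths $\tfrac{3}{2}\sqrt{a^2+ab+b^2}$ has a large gap between its smallest nonzero value $\tfrac{3}{2}$ and its next value $\tfrac{3\sqrt{3}}{2}$, since the quadratic form $a^2+ab+b^2$ takes nonzero integer values $1,3,4,7,9,\ldots$ and in particular does not represent $2$. Consequently, any crude lower bound on $\omega_3(T)$ that strictly exceeds $\tfrac{3}{2}$ will upgrade automatically to the sharp bound.

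First I would apply Lemma \ref{lemm:LS} to a medial subdivision of $T$. Connecting the midpoints of the three sides partitions $T$ into four smaller equilateral triangles of side $\tfrac{\sqrt{3}}{2}$; let $T_1, T_2, T_3$ be the three ``corner'' subtriangles, which are convex Lipschitz domains contained in $T$ with pairwise disjoint interiors. By Theorem \ref{theo:poly-w1}, $\omega_1(T_i) = W(T_i) = \tfrac{3}{4}$ for each $i$ (the altitude of the small triangle). Applying Lemma \ref{lemm:LS} with $p_1=p_2=p_3=1$ therefore yields
\[
\omega_3(T) \geq \omega_1(T_1)+\omega_1(T_2)+\omega_1(T_3) = \tfrac{9}{4} > \tfrac{3}{2}.
\]

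Next I would invoke Proposition \ref{prop:triangle-billiards} to express $\omega_3(T) = \sum_{j=1}^{N} \length(\gamma_j)$ as a sum of $T$-billiard lengths, each of which lies in $\tfrac{3}{2}\{\sqrt{a^2+ab+b^2} : a,b\in\ZZ\}$ by Lemma \ref{lemm:len-T-Bil}, and hence is at least $\tfrac{3}{2}$. Suppose for contradiction that $\omega_3(T) < \tfrac{3\sqrt{3}}{2}$. If $N \geq 2$ then $\omega_3(T) \geq 2 \cdot \tfrac{3}{2} = 3 > \tfrac{3\sqrt{3}}{2}$, contradicting our assumption, so $N=1$ and $\omega_3(T) = \length(\gamma_1)$ is a single billiard length strictly less than $\tfrac{3\sqrt{3}}{2}$. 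By the classification this forces $\omega_3(T) = \tfrac{3}{2}$, contradicting the bound $\omega_3(T) \geq \tfrac{9}{4}$ from the previous step. I do not anticipate any serious obstacle: both ingredients are already in place, and the only mild bookkeeping is verifying that the medial subtriangles are admissible Lipschitz subdomains for Lemma \ref{lemm:LS}, which is immediate from convexity.
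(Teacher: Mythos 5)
Your proposal is correct and follows essentially the same path as the paper: both derive $\omega_3(T)\geq\tfrac{9}{4}$ via Lusternik--Schnirelman applied to the medial subdivision, then combine Proposition \ref{prop:triangle-billiards} with the length classification of Lemma \ref{lemm:len-T-Bil} and a gap argument to upgrade $\tfrac{9}{4}$ to $\tfrac{3\sqrt{3}}{2}$. Your case split on $N\geq 2$ versus $N=1$ is just a slightly more explicit way of phrasing the paper's observation that any sum of admissible lengths lying strictly below $\tfrac{3\sqrt{3}}{2}$ must equal $\tfrac{3}{2}$.
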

\begin{proof}
We can divide $T$ into four equilateral triangles congruent to $\frac 12 T$. Lemma \ref{lemm:LS} and $\omega_1(T) = \frac 32$ gives
\begin{equation}\label{eq:w3-T-LS-lb}
\omega_3(T) \geq 3 \omega_1(\tfrac 1 2 T) = \tfrac 9 4.
\end{equation}
On the other hand, Proposition \ref{prop:triangle-billiards} implies that 
\[
\omega_3(T) = \sum_{j=1}^N \length(\gamma_j)
\]
for $T$-billiards $\gamma_j$. Lemma \ref{lemm:len-T-Bil} gives
\[
\length(\gamma_j) \in \{\tfrac 32 \sqrt{a^2+ab+b^2} : a,b \in \ZZ\}
\]
Note that the only elements of this set with length $\leq \frac{3\sqrt{3}}{2}$ are $\frac 32$ (with $a=1,b=0$ or $a=0,b=1$) and $\frac{3\sqrt{3}}{2}$ (with $a=b=1$). Thus, we find that if $\omega_3(T) < \frac{3\sqrt{3}}{2}$ then it is $\frac 32$. But $\frac 32 < \frac 9 4$, contradicting \eqref{eq:w3-T-LS-lb}. This completes the proof. 
\end{proof}

\begin{lemm}\label{lemm:tet-intersection-length}
    If $Q$ is a regular tetrahedron of side length $\ell$, let $\cP$ denote the set of affine planes in $\RR^3$ that do not contain one of the sides of $Q$. Then
    \[
    \sup_{\Pi \in \cP} \length(\Pi \cap \cQ) = 3\ell. 
    \]
\end{lemm}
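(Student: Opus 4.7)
The plan is to decompose the boundary $\partial(\Pi \cap Q) = \bigsqcup_f (\Pi \cap f)$ as a disjoint union of chords on the four equilateral faces $f$ of $Q$ and to bound each chord separately. Label the vertices of $Q$ by $A, B, C, D$. By the law of cosines at a vertex of an equilateral triangle of side $\ell$ (the angle being $60^\circ$), a chord in a face $f$ joining points at distances $s\ell$ and $t\ell$ from a common vertex of $f$ has length $\ell \sqrt{s^2 + t^2 - st}$. The basic inequality I will use throughout is
\[
\sqrt{s^2 + t^2 - st} \leq \max(s, t) \quad \text{for } s, t \in [0, 1],
\]
which follows by squaring (for $s \leq t$ one has $s^2 - st \leq 0$).

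I would then split according to the combinatorial type of the cross-section. If $\Pi$ gives a triangular cross-section, separating a vertex (say $A$) from $\{B, C, D\}$, it cuts the three edges at $A$ at parameters $t_B, t_C, t_D \in (0, 1]$, and summing the three chord-lengths yields $\length(\Pi \cap Q) \leq 3 \ell \max(t_B, t_C, t_D) \leq 3\ell$, with equality forcing $t_B = t_C = t_D = 1$, so that $\Pi$ is the plane of face $BCD$ (excluded by assumption). If $\Pi$ gives a quadrilateral cross-section, separating $\{A, B\}$ from $\{C, D\}$, let $\alpha, \beta, \gamma, \delta \in (0, 1)$ be the parameters at which $\Pi$ cuts $AC, AD, BC, BD$ (measuring $\alpha, \beta$ from $A$ and $\gamma, \delta$ from $B$); then the perimeter is
\[
\length(\Pi \cap Q) = \ell \bigl[ f(\alpha, \beta) + f(\gamma, \delta) + f(1 - \alpha, 1 - \gamma) + f(1 - \beta, 1 - \delta) \bigr]
\]
with $f(s, t) = \sqrt{s^2 + t^2 - st}$, the four terms coming from faces $ACD, BCD, ABC, ABD$ respectively.

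The main obstacle is the quadrilateral case, where applying $f \leq \max$ term-by-term only gives $\length \leq 4 \ell$. To improve this I would use the monotonicity encoded in the affine parametrisation of $\Pi$: writing $\Pi = \{F = 0\}$ with $F$ affine, $F(A) = a$ and $F(B) = b$ both positive and $F(C) = c$ and $F(D) = d$ both negative, the formulas $\alpha = a/(a - c)$, $\beta = a/(a - d)$, $\gamma = b/(b - c)$, $\delta = b/(b - d)$ give
\[
\alpha \leq \beta \iff c \leq d \iff \gamma \leq \delta, \qquad \alpha \leq \gamma \iff a \leq b \iff \beta \leq \delta.
\]
Using the $A \leftrightarrow B$ and $C \leftrightarrow D$ symmetries of $Q$, I may assume $a \leq b$ and $c \leq d$, so that $\alpha$ is the minimum and $\delta$ the maximum of the four parameters. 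Then $f \leq \max$ collapses to
\[
\length(\Pi \cap Q)/\ell \leq \beta + \delta + (1 - \alpha) + (1 - \beta) = 2 + \delta - \alpha < 3,
\]
with strict inequality since $\alpha > 0$ and $\delta < 1$. Combined with the triangular case, this yields $\length(\Pi \cap Q) < 3 \ell$ for every $\Pi \in \cP$.

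To see that the supremum is equal to (rather than strictly less than) $3\ell$, I would take a family of planes $\Pi_\varepsilon$ parallel to and approaching the plane of any face of $Q$ from inside $Q$; the cross-sections $\Pi_\varepsilon \cap Q$ are then triangles similar to that face with perimeters tending to $3\ell$, exhibiting the supremum in the limit.
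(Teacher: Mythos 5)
Your proof is correct but takes a genuinely different route from the paper. The paper translates $\Pi$ along its own normal direction, observes that the cross-sectional perimeter is a \emph{piecewise linear} function of the displacement (linear on intervals where the plane misses the vertices of $Q$), and then pushes the plane to a breakpoint: the perimeter can only be maximised either when the plane degenerates to a face (limiting perimeter $3\ell$) or when it passes through one or two vertices, in which case the cross-section consists of three chords, each lying in a face and hence of length at most $\ell$. This is a soft, one-line deformation argument. You instead estimate the perimeter directly by decomposing it into face-chords, invoking the law-of-cosines formula $\ell\sqrt{s^2+t^2-st}$ and the elementary bound $\sqrt{s^2+t^2-st}\leq\max(s,t)$, and then splitting into the triangular and quadrilateral cases. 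The triangular case is immediate; your real contribution is the quadrilateral case, where you notice that the affine parametrisation $\alpha=a/(a-c)$ etc.\ forces a monotone structure ($\alpha$ minimal, $\delta$ maximal after symmetrising), under which the four $\max$-bounds telescope to $2+\delta-\alpha<3$ rather than the naive $4$. That is a clean and non-obvious observation. The paper's sliding argument is shorter and avoids the case split and the monotonicity lemma, but your computation is more explicit and gives the strict inequality $\length(\Pi\cap Q)<3\ell$ directly. One small loose end: your case analysis covers only the generic cross-sections (no vertex of $Q$ on $\Pi$), whereas planes in $\cP$ may pass through one or two vertices. This is easily patched --- either by the same chord decomposition (a vertex-passing cross-section still has at most three face-chords, each $\leq\ell$) or by noting that such planes are limits of generic ones and the cross-sectional perimeter of a convex body depends continuously on the cutting plane --- but it should be said.
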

\begin{proof}
    Consider $\Pi \in \cP$ and let $\Pi_t$ be the translation of $\Pi$ in the normal direction with speed $t$. Note that $t\mapsto \length(\Pi_t \cap Q)$ is linear as long as $\Pi_t$ does not intersect any vertices of $Q$. Indeed, for such an interval, $\Pi_t \cap Q$ will be a polygon whose vertices move linearly with $t$ in $\Pi_t$ (since they are the intersection of $\Pi_t$ with a line in $\RR^3$). Thus, we can increase/decrease $t$ to increase $\length(\Pi_t \cap Q)$ until either (a) $\Pi_T$ is a face of $Q$ (in which case we find that $\lim_{t\to T}\length(\Pi_t\cap Q) = 3\ell$), or (b) $\Pi_T$ contains one or two vertices of $Q$. In the case (b), $\Pi_T\cap Q$ must consist of three line segments each contained in a face. The maximal length of such a segment is $\ell$. This completes the proof. 
    \end{proof}

\begin{prop}\label{prop:third-width-triangle}
$\omega_3(T)=\frac{3\sqrt{3}}{2}$.
\end{prop}
\begin{proof}
    For $\eps>0$, consider $X_\eps\subset \RR^3$ the compact set bounded by the regular tetrahedron of side length $\frac{\sqrt{3}}{2}+2\eps$. Let $M_\eps' = \partial U_\eps(X_\eps)$ be the boundary of the tubular neighborhood of $X_\eps$. Observe that the induced metric on $M_\eps'$ has zero Gaussian curvature except on $M_\eps'\cap B_\eps(V)$. In particular, since $T$ can be thought of as a net for the tetrahedron of side length $\frac{\sqrt{3}}{2}$, we can find a region $T_\eps \subset M'_\eps$ that is isometric to $T$. 
    
    Thus, we have that $\omega_3(T) \leq \omega_3(M_\eps')$ by Lemma \ref{lemm:LS}. Let $M_\eps$ denote a strictly convex approximation of $M_\eps'$ so that still $M_\eps$ converges to $T$ as $\eps \to 0$. Thus it suffices to prove that
    \[
    \lim_{\eps\to 0} \omega_3(M_\eps) \leq \frac{3\sqrt{3}}{2}.
    \]
    Consider $\Phi : \RR P^3 \to \cZ_1(M_\eps)$ defined by $\Phi([a:b:c:d]) = \partial(M_\eps\cap \{ax+by+cz+d<0\})$. It's easy to see this is a $3$-sweepout with no concentration of mass. 
    
    Let $Q_\eps$ denote the concentric tetrahedron of side length $\frac{\sqrt{3}}{2} + C\eps$ for $C$ sufficiently large so that $M_\eps$ is strictly contained inside of $Q_\eps$. For any plane $\Pi=\{ax+by+cz+d=0\}$ so that $\Pi \cap M_\eps \neq \emptyset$ we have that either $\Pi$ is tangent to $M_\eps$ or else intersects $M_\eps$ transversely. Moreover, in either case, $\Pi$ must intersect each plane defining $Q_\eps$ transversely, if at all ($\Pi$ cannot be equal to any of the planes since this would violate $\Pi \cap M_\eps \neq \emptyset$). As such, if $\Pi$ intersects $M_\eps$ transversely then we have that $\Pi \cap M_\eps$ is a convex curve in $\RR^2$ contained inside (as curves in $\Pi$) of the convex curve $\Pi \cap Q_\eps$. Thus, the Crofton formula implies that 
    \[
    \length(\Pi\cap M_\eps) \leq \length(\Pi\cap Q_\eps) \leq \frac{3\sqrt{3}}{2} + 3 C\eps,
    \]
    where we used Lemma \ref{lemm:tet-intersection-length} in the last step. Sending $\eps\to 0$ completes the proof. 
\end{proof}

\subsection{$w_4(T)$} Consider the $2$-sweepout $\Phi : \RR P^2\to \cZ_1(T,\partial T)$ defined in Section \ref{subsec:2-width-T}. Let $\hat \Phi : \Sym^2(\RR P^2) \to \cZ_1(T,\partial T)$ be $\hat \Phi([x,y]) = \Phi(x) + \Phi(y)$. By \cite[\S 4]{DL} (cf.\ \cite{Arnold}), $\Sym^2(\RR P^2)$ is homotopy equivalent\footnote{Actually with the natural smooth structure on $\Sym^n(\RR P^2)$ we see that $\Sym^2(\RR P^2)$ is diffeomorphic to $\RR P^{2n}$.} to $\RR P^4$. From this (and Lemma \ref{lemm:Phi-2-swp-T-is-swp}) we see that $\hat \Phi$ is a $4$-sweepout with no concentration of mass. By Lemma \ref{lemm:Phi-2-swp-upper-bd} we have $\sup_{(x,y) \in \Sym^2(\RR P^2)} \MM(\Phi(x,y)) \leq 3$. Thus we find that
\[
\omega_4(T) \leq 3. 
\]
As above, we can divide $T$ into four equilateral triangles congruent to $\frac 12 T$. Lemma \ref{lemm:LS} and $\omega_1(T) = \frac 32$ gives
\[
\omega_4(T) \geq 4 \omega_1(\tfrac 1 2 T) = 3.
\]
Thus, $\omega_4(T) = 3$ as claimed. 

\section{Low $p$-widths of the square}\label{sec:square}
Let $S$ be the square (with side length $\sqrt{2}$) inscribed in the unit circle. $W(S)=\sqrt{2}$, so Theorem \ref{theo:poly-w1} gives $\omega_1(S)=\sqrt{2}$.

\subsection{$\omega_2(S)$.} 
\begin{prop}
    $\omega_2(S)=2$.
\end{prop}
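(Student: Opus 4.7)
The plan is to prove $\omega_2(S) = 2$ by sandwiching an explicit $2$-sweepout against a Lusternik--Schnirelman lower bound.

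For the upper bound, I would take the natural ``sweep by all lines'' map $\Phi: \RR P^2 \to \cZ_1(S,\partial S)$ defined by sending $[a:b:c]$ to $\partial(S\cap\{ax+by+c<0\})$ (with the exceptional point $[0:0:1]$ mapped to $\emptyset$), which in $\cZ_1(S,\partial S)$ represents the chord $\{ax+by+c=0\}\cap S$. This map is continuous, has no concentration of mass, and its maximal mass equals $\diam(S)=2$, achieved at either diagonal. To verify that $\Phi$ is a $2$-sweepout I would apply Lemma \ref{lemm:check-sweepout} with $\lambda\in H^1(\RR P^2;\ZZ_2)$ the generator: it suffices to show that for the loop $\gamma(t)=[\cos(\pi t):\sin(\pi t):0]$, $t\in[0,1]$ (lines through the center of $S$), which generates $\pi_1(\RR P^2)=\ZZ_2$, the composition $\Phi\circ\gamma$ is a $1$-sweepout. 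This is clear: rotating a line by $\pi$ swaps its two half-planes, so $\Phi\circ\gamma$ does not lift to a loop under the double cover $\cC(S,\partial S)\to\cZ_1(S,\partial S)$. Hence $\omega_2(S)\leq 2$.

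For the lower bound, I would split $S$ along a diagonal into two congruent right isoceles triangles $T_1,T_2$, each with legs of length $\sqrt{2}$ and hypotenuse of length $2$. Each $T_i$ is a convex polygon, so Theorem \ref{theo:poly-w1} gives $\omega_1(T_i)=W(T_i)$. A direct computation shows that the geometric width of $T_i$ is minimized in the direction normal to the hypotenuse, and equals the altitude from the right-angle vertex to the hypotenuse, namely $1$. The interiors of $T_1,T_2$ are disjoint, so Lemma \ref{lemm:LS} with $p_1=p_2=1$ yields $\omega_2(S)\geq \omega_1(T_1)+\omega_1(T_2)=2$, completing the proof.

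I do not anticipate a real obstacle. The construction parallels the paper's treatment of $\omega_2(T)$ in Section \ref{subsec:2-width-T} but is cleaner: no symmetric-product identification is needed because $\RR P^2$ is already the space of affine lines in $\RR^2$, and the lower bound is an immediate application of Lusternik--Schnirelman once one observes that a diagonal splits $S$ into two triangles of width exactly $1$. Alternatively, the upper bound can be obtained from monotonicity, since $S$ is contained in the unit disk $D$ for which Donato proved $\omega_2(D)=2$.
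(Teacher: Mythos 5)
Your proposal is correct and takes essentially the same approach as the paper: the upper bound comes from the sweepout of $S$ by all affine lines (maximal mass $= \diam(S) = 2$), and the lower bound from splitting $S$ along a diagonal into two right isoceles triangles of width $1$ and applying Lusternik--Schnirelman. The only extra content in your write-up is the explicit verification that the line sweepout is a $2$-sweepout and the (valid) alternative remark that $S\subset D$ plus Donato's $\omega_2(D)=2$ also gives the upper bound, but the core argument is identical.
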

\begin{proof}
Divide $S$ along one of its diagonal into two congruent right isoceles triangles $T_1,T_2$. Note that $\omega_1(T_1)=\omega_1(T_2)=W(T_1)=W(T_2)=1$. Thus, Lemma \ref{lemm:LS} gives
$$\omega_2(S)\geq \omega_1(T_1)+\omega_2(T_2)=2$$
On the other hand, a the $2$-sweepout $[a:b:c]\in \RR P^2 \mapsto \{ax+by=c\}\cap S$ gives $\omega_2(S)\leq 2$.
\end{proof}

\subsection{$\omega_3(S)$.} 

\begin{prop}
  $\omega_3(S)=2\sqrt{2}$.
\end{prop}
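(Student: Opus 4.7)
The plan is to prove matching bounds for $\omega_3(S)$. For the upper bound I would construct an explicit $3$-sweepout of $S$ by lines and hyperbolas; for the lower bound I would combine Lemma \ref{lemm:LS} with a billiard-length quantization for the square analogous to Lemma \ref{lemm:len-T-Bil}.

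For the upper bound, I would define
\[
\Phi:\RR P^3\to\cZ_1(S,\partial S),\qquad [a:b:c:d]\mapsto\partial\{(x,y)\in S: a+bx+cy+dxy<0\}.
\]
When $d=0$ the level set is a line of length $\leq\diam(S)=2$. When $d\neq 0$, normalizing $d=1$ and substituting $u=x+c,\ v=y+b$ turns the level set into the hyperbola $\{uv=k\}$ with $k=bc-a$ inside a translated $\sqrt{2}\times\sqrt{2}$ axis-aligned square $R$. On each branch $v=k/u$ the inequality $\sqrt{1+(k/u^2)^2}\leq 1+|k/u^2|$ yields arc length $\leq |u_b-u_a|+|v_a-v_b|$, so each branch is bounded by the width plus height of the sub-rectangle of $R$ containing it. When the hyperbola's asymptote lies in $R$ and both branches are present, the two sub-rectangles are complementary and their widths (resp.\ heights) sum to the width (resp.\ height) of $R$, giving total length at most $\sqrt{2}+\sqrt{2}=2\sqrt{2}$; when the asymptote lies outside $R$, at most one branch intersects $R$ and is already bounded by $2\sqrt{2}$. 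To verify $\Phi$ is a $3$-sweepout I would apply Lemma \ref{lemm:check-sweepout} with $\lambda=\alpha\in H^1(\RR P^3;\ZZ_2)$ the generator (so $\alpha^3\neq 0$): on the loop $\gamma(\theta)=[\cos\theta:\sin\theta:0:0]$ generating $\pi_1(\RR P^3)$, the composition $\Phi\circ\gamma$ is the standard sweepout of $S$ by vertical lines $\{x=-\cot\theta\}$, whose Caccioppoli lift runs from $\emptyset$ to $S$ and hence is a $1$-sweepout. No concentration of mass follows since the level curves have uniformly bounded length. This gives $\omega_3(S)\leq 2\sqrt{2}$.

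For the lower bound, decomposing $S$ into four congruent subsquares of side $\sqrt{2}/2$ and applying Lemma \ref{lemm:LS} with Theorem \ref{theo:poly-w1} yields
\[
\omega_3(S)\geq 3\cdot\omega_1(\tfrac12 S)=3\cdot\tfrac{\sqrt{2}}{2}=\tfrac{3\sqrt{2}}{2}.
\]
Since the standard tesselation of $\RR^2$ by copies of $S$ has four (an even number of) faces meeting at each vertex, the remark following Proposition \ref{prop:triangle-billiards} applies to give $\omega_3(S)=\sum_j\length(\gamma_j)$ for $S$-billiards $\gamma_j$. Unfolding each such $S$-billiard through the tesselation exactly as in Lemma \ref{lemm:len-T-Bil}---now the orbit $G(A)$ of the vertex $A=(0,0)$ is the sublattice $2\sqrt{2}\ZZ^2$, and the ``doubling across $L$'' trick halves the length precisely when the end-line $L\in\cT$ is perpendicular, rather than parallel, to the starting edge---shows every length lies in
\[
\Lambda=\{\sqrt{2}\sqrt{a^2+b^2}: a,b\in\ZZ_{\geq 0},\ (a,b)\neq(0,0)\}.
\]
The elements of $\Lambda$ strictly below $2\sqrt{2}$ are only $\sqrt{2}$ (from $(1,0)$) and $2$ (from $(1,1)$), both strictly below $\tfrac{3\sqrt{2}}{2}$, while any sum of at least two elements of $\Lambda$ is at least $\sqrt{2}+\sqrt{2}=2\sqrt{2}$. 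Combined with $\omega_3(S)\geq\tfrac{3\sqrt{2}}{2}$, this forces $\omega_3(S)\geq 2\sqrt{2}$, which together with the upper bound gives $\omega_3(S)=2\sqrt{2}$.

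The hard part will be verifying the billiard-length quantization lemma for the square. The unfolding argument mirrors that of Lemma \ref{lemm:len-T-Bil}, but with two families of reflection lines in $\cT$ (horizontal and vertical rather than at $60^\circ$ angles) and the possibility of vertex-terminating trajectories such as the sides and diagonals of $S$, a short case analysis is required to confirm that the endpoint of the (possibly doubled) unfolded trajectory always lies in $G(A)=2\sqrt{2}\ZZ^2$.
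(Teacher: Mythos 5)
Your proposal is correct and takes essentially the same route as the paper: the same sweepout $\Phi:\RR P^3\to\cZ_1(S,\partial S)$ by hyperbolas with horizontal/vertical asymptotes for the upper bound (your $\sqrt{1+(k/u^2)^2}\leq 1+|k/u^2|$ estimate is exactly the paper's Lemma~\ref{lemm:graph-increasing-func}, and your two-branch/one-branch case split corresponds to the paper's observation that the hyperbola projects injectively to both coordinate axes), and the same lower bound combining the $\tfrac{3\sqrt2}{2}$ Lusternik--Schnirelmann bound with the billiard-length quantization $\sqrt{2}\sqrt{a^2+b^2}$, which the paper obtains by the analogue of Lemma~\ref{lemm:triangle-billiard-reflect}. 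Your write-up spells out the sweepout verification via Lemma~\ref{lemm:check-sweepout} and the unfolding lattice $2\sqrt2\,\ZZ^2$ a bit more explicitly than the paper does, but the argument is the same.
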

\begin{proof}
Dividing $S$ into 4 congruent squares each with half the side length of $S$, we get from Lemma \ref{lemm:LS} and Theorem \ref{theo:poly-w1} that
\[
\omega_3(S)\geq 3\omega_1\left(\frac{1}{2}S\right)=\frac{3\sqrt{2}}{2}
\]
By an argument similar to Lemma \ref{lemm:triangle-billiard-reflect}, widths of $S$ are achieved by a set of line segments each connecting two points in $(\sqrt{2}\ZZ)^2$. So, $\omega_3(S)=\sum_{j=1}^{N'} \length(\gamma_j)$ with $\length(\gamma_j)\in \{\sqrt{2}\sqrt{a^2+b^2}: a,b\in\ZZ\}$. Since $\omega_3(S)\geq \frac{3\sqrt{2}}{2}$ (so in particular it is greater than $2$), we must have $\omega_3(S)\geq 2\sqrt{2}$.

Next we consider $\Phi : \RR P^3  \to \cZ_1(S,\partial S)$ defined by
\begin{align*}
 [a:b:c:d]  \mapsto \partial (S\cap \{axy+bx+cy+d<0\})
\end{align*}
It's clear that $\Phi$ is a $3$-sweepout with no concentration of mass. As such, it remains to bound $\max_{\RR P^3}\MM(\Phi(\cdot))$. Note that $\Phi([a:b:c:d])$ is the intersection of a hyperbola $H$ (with vertical/horizontal axes) with $S$. Write $r = \frac{\sqrt{2}}{2}$. If $\MM(\Phi([a:b:c:d])) \neq 0$ the corresponding hyperbola intersects $S$ in one or two arcs, each of which is a graph of an increasing/decreasing function. Since $H$ projects injectively to the coordinate axes, the assertion follows from Lemma \ref{lemm:graph-increasing-func} below.
\end{proof}

\begin{lemm}\label{lemm:graph-increasing-func}
    Suppose that $f: [a,b] \to \RR$ is a $C^1$-function with $f'\geq0$. The length of $[a,b] \ni t \mapsto (t,f(t))$ is bounded by $( b-a)+ (f(b)-f(a))$. 
\end{lemm}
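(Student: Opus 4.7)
The plan is to reduce the claim to a pointwise inequality under the integral sign defining arc length. The length of the graph is
\[
L = \int_a^b \sqrt{1+f'(t)^2}\,dt,
\]
while the proposed upper bound rewrites, by the fundamental theorem of calculus, as
\[
(b-a) + (f(b)-f(a)) = \int_a^b \bigl(1+f'(t)\bigr)\,dt.
\]
So it suffices to show $\sqrt{1+f'(t)^2} \leq 1 + f'(t)$ pointwise, and this is where the hypothesis $f'\geq 0$ enters.

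First I would verify the pointwise inequality: since $f'(t)\geq 0$, both sides are nonnegative, so squaring is an equivalence. The square of the right-hand side is $1+2f'(t)+f'(t)^2$, which exceeds $1+f'(t)^2$ precisely because $2f'(t)\geq 0$. Then I would integrate this inequality over $[a,b]$ and apply the fundamental theorem of calculus to identify $\int_a^b f'(t)\,dt$ with $f(b)-f(a)$.

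There is no real obstacle; the monotonicity hypothesis is used exactly once, to ensure $2f'(t)\geq 0$ and hence the quadratic inequality $\sqrt{1+f'^2}\leq 1+f'$. Geometrically this just says that the length of a monotone graph is at most the $\ell^1$ length of the straight displacement vector $(b-a, f(b)-f(a))$, which is the intuitive content being exploited in the preceding proof to estimate hyperbolic arcs inside $S$ by the sum of horizontal and vertical extents.
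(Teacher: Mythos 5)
Your proof is correct and follows essentially the same argument as the paper's: both reduce to the pointwise inequality $\sqrt{1+f'(t)^2}\leq 1+f'(t)$ for $f'\geq 0$, integrate, and apply the fundamental theorem of calculus.
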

\begin{proof}
    The length satisfies
    \begin{align*}
        L & = \int_a^b \sqrt{1+f'(t)^2} \, dt \\
        & \leq \int_a^b 1+|f'(t)| \, dt\\
        & = (b-a) + (f(b)-f(a)). 
    \end{align*}
    This completes the proof. 
\end{proof}

\appendix

\section{Caccioppoli sets}\label{app:caccioppoli-sets}

Fix $K\subset \RR^2$ a compact convex set with Lipschitz boundary. For $\Omega\subset K$ measurable we define the \emph{relative perimeter} of $\Omega$ by
\[
P(\Omega) : = \sup\left\{\int_\Omega \Div X : X \in C^1(K;\RR^2), |X|\leq 1, \supp X \Subset \mathring K \right\}
\]
for $\mathring K$ the interior of $K$. When $\partial \Omega$ is a smooth curve this can easily be seen to agree with the length of $\partial\Omega$ (as a curve in $\mathring K$). We call $\Omega$ a \emph{Caccioppoli set} (set of finite perimeter) in $K$ if $P(\Omega) < \infty$.  An overview of Caccioppoli sets may be found in \cite{Maggi:FP}. We let $\cC(K,\partial K)$ denote the space of Caccioppoli sets (identifying sets that agree up to measure zero) equipped with the $L^1$-topology, i.e.\ $d_\cC(\Omega,\tilde\Omega) = \| \chi_{\Omega} - \chi_{\tilde\Omega}\|_{L^1(K)}$. 

Key facts underlying the Gromov--Guth theory of $p$-widths are:
\begin{enumerate}
\item $\cC(K,\partial K)$ is contractible. To see this, we let $\Phi_t : \cC(K,\partial K)\to \cC(K,\partial K)$, $\Omega \mapsto \Omega\cap\{x \leq t\}$. This is well-defined since the intersection of two Caccioppoli sets is again Caccioppoli \cite[Lemma 12.22]{Maggi:FP}. Since $\Phi_t = \Id$ for $t\gg0$ and $\Phi_t(\Omega)= \emptyset$ for $t\ll 0$, the assertion follows. 
\item $\Omega\mapsto \Omega^c$ is a fixed-point free $\ZZ_2$ action on $\cC(K,\partial K)$ with $P(\Omega) = P(\Omega^c)$. 
\end{enumerate}
As such, we can define $\cZ_1(K,\partial K) : = \cC(K,\partial K)/(\Omega\sim \Omega^c)$. For $\Omega \in \cC(K,\partial K)$ we'll use the notation $\partial\Omega$ for the equivalence class in $\cZ_1(K,\partial K)$. We write $\MM(\partial \Omega) = P(\Omega)$ (well-defined by (2) above).  

Note that the quotient topology can be metrized via the \emph{flat norm} as 
\[
\cF(\partial\Omega,\partial \tilde\Omega) = \min\{d_\cC(\Omega,\tilde\Omega),d_\cC(\Omega,\tilde\Omega^c)\}.
\]
One may give a geometric interpertation of $\cZ_1(K,\partial K)$ and the boundary operator $\partial$ using the notion of rectifiable mod $2$ flat chains (cf.\ \cite{Pitts,Simon:GMT,Guth,MarquesNeves:app}).

\bibliographystyle{alpha}
\bibliography{bib}
\end{document}